\documentclass[11pt,a4paper,reqno]{amsart}

\usepackage[T1]{fontenc}

\usepackage[utf8]{inputenc}


\usepackage{amsmath,amsthm,amsopn,amsfonts,amssymb,color}
\usepackage{cancel}
 \usepackage{tikz}
\usetikzlibrary{intersections,calc}
\usepackage{graphicx}
\usepackage{float}
\usepackage{amsmath}
\usepackage{fancyhdr}
\usepackage{epstopdf}
\usepackage{amsfonts}
\usepackage{pgfplots}
\pgfplotsset{compat=newest}
\usetikzlibrary{intersections, pgfplots.fillbetween}
\pgfdeclarelayer{pre main}

\usepackage[colorlinks=true,linkcolor=black,urlcolor=black,citecolor=black,pdftitle={An AAP principle for Schr\"{o}dinger operators}, pdfauthor={S. Buccheri, L. Orsina and A.C. Ponce}]{hyperref}

\usepackage[abbrev]{amsrefs}
\usepackage{mathtools}
\usepackage{esint}

\usepackage{booktabs}
\usepackage[USenglish]{babel}
\usepackage{cancel}
\usepackage{times}
\usepackage{mathrsfs}
\usepackage{multirow,multicol}
\usepackage{amsfonts}
\usepackage{amsthm}
\usepackage{amsmath}
\makeatletter
\newdimen\rh@wd
\newdimen\rh@hta
\newdimen\rh@htb
\newbox\rh@box
\def\rh@measure#1{\setbox\rh@box=\hbox{$#1$}\rh@wd=\wd\rh@box \rh@hta=\ht\rh@box}

\def\widecheck#1{\rh@measure{#1}%
  \setbox\rh@box=\hbox{$\widehat{\vrule height \rh@hta width\z@ \kern\rh@wd}$}%
  \rh@htb=\ht\rh@box \advance\rh@htb\rh@hta \advance\rh@htb\p@
  \ooalign{$\vrule height \ht\rh@box width\z@ #1$\cr
           \raise\rh@htb\hbox{\scalebox{1}[-1]{\box\rh@box}}\cr}}
\makeatother

\usepackage{amsmath,amssymb}

\usepackage{enumerate}
\usepackage{constants}



\def\norma#1#2{\|#1\|_{\lower 4pt \hbox{$\scriptstyle #2$}}}

\def\finedim
\def\gw{G_{\tilde{k}}(w_n)}



\DeclareMathOperator{\supp}{supp}

 \usepackage{color}
 \numberwithin{equation}{section}
\usepackage{todonotes}


\theoremstyle{plain}
\newtheorem{proposition}{Proposition}[section]
\newtheorem{lemma}[proposition]{Lemma}
\newtheorem{theorem}[proposition]{Theorem}
\newtheorem{corollary}[proposition]{Corollary}
\newtheorem{definition}[proposition]{Definition}

\theoremstyle{definition}

\theoremstyle{remark}
\newtheorem{example}[proposition]{Example}


\newcommand{\abs}[1]{\mathopen\lvert#1\mathclose\rvert}

\newcommand{\norm}[1]{\mathopen\lVert#1\mathclose\rVert}

\newcommand{\ps}[2]{(#1\vert#2)}
\newcommand{\N}{{\mathbb N}}
\newcommand{\R}{{\mathbb R}}
\newcommand{\dif}{\,\mathrm{d}}
\DeclareMathOperator{\diam}{diam}
\DeclareMathOperator{\Div}{div}
\DeclareMathOperator{\capt}{cap}

\title[An AAP principle for Schr\"{o}dinger operators]{An Agmon-Allegretto-Piepenbrink principle for Schr\"{o}dinger operators}

\author{Stefano Buccheri}
\address{
Stefano Buccheri\hfill\break\indent
University of Vienna\hfill\break\indent
Faculty of Mathematics\hfill\break\indent
Oskar-Morgenstern-Platz 1\hfill\break\indent
1090 Vienna, Austria}

\author{Luigi Orsina}
\address{
Luigi Orsina\hfill\break\indent
``Sapienza'' Universit\`a di Roma\hfill\break\indent
Dipartimento di Matematica \hfill\break\indent
P.le A.~Moro 2\hfill\break\indent
00185 Roma, Italy}

\author{Augusto C. Ponce}
\address{
Augusto C. Ponce\hfill\break\indent
 Université catholique de Louvain\hfill\break\indent
 Institut de Recherche en Mathématique et Physique\hfill\break\indent
 Chemin du cyclotron 2, L7.01.02\hfill\break\indent
1348 Louvain-la-Neuve, Belgium}


\begin{document}

\dedicatory{
To Ildefonso Díaz, mathematician and gentleman.\\
\bigskip
``Non fuyades, cobardes y viles criaturas,\\
que un solo caballero es el que os acomete.''\\
(Cervantes, Don Quijote)
}

\begin{abstract}
We prove that each Borel function \(V : \Omega \to [{-\infty}, +\infty]\) defined on an open subset \(\Omega \subset \R^{N}\) induces a decomposition $\Omega = S \cup \bigcup_{i} D_{i}$ such that
every function in $W^{1,2}_{0}(\Omega) \cap L^{2}(\Omega; V^{+} \dif x)$ is zero almost everywhere on $S$ and existence of 
nonnegative supersolutions of $-\Delta + V$ on each component $D_{i}$ yields nonnegativity of the associated quadratic form
\(
\int_{D_{i}} (|\nabla \xi|^2+V\xi^2).
\)
\end{abstract}

\subjclass[2010]{Primary: 35J10, 35R05, 46E35; Secondary: 35B05, 35J15, 35J20}

\keywords{Schr\"{o}dinger operator, AAP principle, Poincaré inequality, singular potential}

\maketitle 

\section{Introduction}

Given a smooth bounded connected open set $\Omega\subset \mathbb{R}^{N}$ with \(N \ge 2\) and a Borel function  \(V : \Omega \to [{-\infty}, +\infty]\), we investigate the relation between existence of nonnegative solutions of the Dirichlet problem
\begin{equation}
		\label{eqDirichletProblemIntroduction}
	\left\{
	\begin{alignedat}{2}
	- \Delta u + Vu & = \mu	&& \quad \text{in \(\Omega\),}\\
	u & = 0 && \quad \text{on \(\partial\Omega\),}
	\end{alignedat}
	\right.
	\end{equation}
and nonnegativity of the associated quadratic form, namely
\begin{equation}\label{gap}
\int_{\Omega} (|\nabla \xi|^2+V\xi^2) \ge 0.
\end{equation}

When $V$ belongs to a Lebesgue space $L^{p}(\Omega)$ with $p > N/2$ or, more generally, to the Kato class $K(\Omega)$, see Definition~\ref{katoclass} below, the Agmon-Allegretto-Piepenbrink (AAP) principle provides a strong link between existence of nonnegative supersolutions and certain spectral properties of the Schr\"odinger operator \(-\Delta + V\); see e.g.~\cite{simon}. 
A typical formulation of the AAP principle for \eqref{eqDirichletProblemIntroduction} is the following

\begin{theorem}
	\label{theoremAAPCommon}
	If \(V \in K(\Omega)\), then the Dirichlet problem \eqref{eqDirichletProblemIntroduction} has a nontrivial nonnegative solution \(u \in W_{0}^{1, 2}(\Omega)\) for some nonnegative \(\mu \in L^{2}(\Omega)\) if and only if inequality \eqref{gap} holds for every \(\xi \in W_{0}^{1, 2}(\Omega)\).
\end{theorem}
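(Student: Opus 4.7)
The proof splits into the two implications, both relying on the Gårding inequality $\int(|\nabla \xi|^2 + V\xi^2) + C \int \xi^2 \ge \alpha \int|\nabla \xi|^2$ and the strong maximum principle (Harnack) available for Schrödinger operators with Kato class potentials.

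For $(\Leftarrow)$, my plan is to take $f \equiv 1 \in L^2(\Omega)$ and, for each $\epsilon > 0$, solve the regularized problem $-\Delta u_\epsilon + Vu_\epsilon + \epsilon u_\epsilon = 1$ with zero boundary data. The hypothesis $Q \geq 0$ combined with Gårding ensures coercivity of the associated bilinear form on $W_0^{1,2}(\Omega)$ for every $\epsilon > 0$ (with constant degenerating as $\epsilon \to 0^+$), so the direct method produces $u_\epsilon \in W_0^{1,2}(\Omega)$; the weak maximum principle forces $u_\epsilon \geq 0$. Testing the equation against $u_\epsilon$ and using $Q(u_\epsilon) \geq 0$ yields the a priori bound $\epsilon \|u_\epsilon\|_{L^2}^2 \leq \|u_\epsilon\|_{L^1}$. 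Two scenarios then arise as $\epsilon \to 0^+$: if $\{u_\epsilon\}$ stays bounded in $W_0^{1,2}$, the weak limit $u$ solves $-\Delta u + Vu = 1$ with $u \geq 0$ nontrivial (the equation rules out $u \equiv 0$), yielding the theorem with $\mu \equiv 1$; otherwise $\|u_\epsilon\|_{L^2} \to \infty$ along a subsequence, and the rescaling $v_\epsilon := u_\epsilon/\|u_\epsilon\|_{L^2}$ is bounded in $W_0^{1,2}$ (again by Gårding) and normalized in $L^2$, so its weak-strong limit $v$ satisfies $-\Delta v + Vv = 0$ with $v \geq 0$ and $\|v\|_{L^2} = 1$, yielding the theorem with $\mu \equiv 0$.

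For $(\Rightarrow)$, I would first invoke the strong maximum principle to upgrade the nontrivial nonnegative $u$ to $u > 0$ in $\Omega$, and then carry out the Agmon factorization. For $\xi \in C_c^\infty(\Omega)$ and $\delta > 0$, the function $\xi^2/(u+\delta) \in W_0^{1,2}(\Omega) \cap L^\infty(\Omega)$ is an admissible test function in $-\Delta u + Vu = \mu$. Combining the resulting weak identity with a direct chain-rule expansion of $|\nabla \xi|^2$ (writing $\xi = (u+\delta) \cdot \xi/(u+\delta)$) produces
\begin{equation*}
\int_\Omega (|\nabla \xi|^2 + V \xi^2) = \int_\Omega (u+\delta)^2 \Bigl| \nabla \tfrac{\xi}{u+\delta}\Bigr|^2 + \int_\Omega \mu \tfrac{\xi^2}{u+\delta} + \delta \int_\Omega V \tfrac{\xi^2}{u+\delta}.
\end{equation*}
The first two terms on the right are nonnegative, while the third vanishes as $\delta \to 0^+$ by dominated convergence, using $u > 0$ on $\supp \xi$ and the Kato-class integrability of $V$; thus $Q(\xi) \geq 0$ for every $\xi \in C_c^\infty(\Omega)$. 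Density of $C_c^\infty(\Omega)$ in $W_0^{1,2}(\Omega)$ combined with continuity of $Q$ on $W_0^{1,2}(\Omega)$ (a standard consequence of $V \in K(\Omega)$) extends the inequality to the entire energy space.

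The principal obstacle lies in rigorously justifying the factorization identity: one must confirm that $\xi^2/(u+\delta) \in W_0^{1,2}(\Omega)$, that the chain rule applied to $\nabla(\xi^2/(u+\delta))$ is valid, that $V\xi^2/(u+\delta)$ is integrable uniformly in $\delta$, and that the dominated convergence passage $\delta \to 0^+$ can be carried out. All of these follow from Harnack's inequality, which yields a lower bound $u \geq c > 0$ on $\supp \xi$, together with Kato-class form-boundedness of $V$. A secondary technical point in direction $(\Leftarrow)$ is the compactness needed to pass to the limit in $\int V v_\epsilon^2$; this rests on the compact embedding $W_0^{1,2}(\Omega) \hookrightarrow L^2(\Omega)$ and the form-compactness of multiplication by $V$ in the Kato class.
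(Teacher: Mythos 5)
Your proof is correct; the two implications compare differently with the paper's. For the direct implication you use essentially the paper's mechanism: the ground-state substitution with test function \(\xi^{2}/(u+\delta)\), the strong maximum principle to get \(u>0\), and the Kato-class form bound to pass from \(C_c^{\infty}(\Omega)\) to \(W_0^{1,2}(\Omega)\). The paper packages this as Lemma~\ref{lemmaPoincareVariational}, estimating the cross term by Young's inequality rather than writing the exact factorization identity, but the content is the same; note also that \(u>0\) almost everywhere already suffices for the dominated-convergence step, so the uniform Harnack lower bound on \(\supp\xi\) is stronger than what is needed. For the reverse implication you genuinely diverge: the paper minimizes the Rayleigh quotient \eqref{11:39} directly, using Lemma~\ref{morreyadams} for the a priori bound and the compactness of \(W_0^{1,2}(\Omega)\subset L^{2}(\Omega;V\dif x)\) to pass to the limit, and then takes \(u=|v|\) and \(\mu=\lambda_1 u\). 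You instead solve the penalized problems \(-\Delta u_\epsilon+Vu_\epsilon+\epsilon u_\epsilon=1\) and run a dichotomy on \(\|u_\epsilon\|_{L^{2}(\Omega)}\). This route is sound: coercivity of the penalized form follows by interpolating the hypothesis \eqref{gap} with the G\aa rding bound; the sign of \(u_\epsilon\) follows by testing with \(u_\epsilon^{-}\), since the hypothesis then forces \(\epsilon\|u_\epsilon^{-}\|_{L^{2}(\Omega)}^{2}\le 0\); the two branches are exhaustive because, by G\aa rding, boundedness in \(L^{2}(\Omega)\) implies boundedness in \(W_0^{1,2}(\Omega)\); and each branch produces an admissible pair, with \(\mu\equiv 1\) or \(\mu\equiv 0\). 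The paper's variational argument is shorter and identifies the supersolution as the ground state, whereas your scheme avoids the normalization constraint at the cost of the case analysis.
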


Such a statement is known to specialists but in Section~\ref{appendix} we present a proof for the reader's convenience. 
A key ingredient is the compact imbedding \(W_{0}^{1, 2}(\Omega) \subset L^{2}(\Omega; V \dif x)\) that holds when \(V \in K(\Omega)\).{}

Various attempts have been made to further generalize the class of potentials \(V\) in the AAP principle; see for instance \cites{JMV,LSV,LP,PinPsa}.
However, an equivalence as in Theorem~\ref{theoremAAPCommon} for a general Borel function \(V\) is hopelessly false. 
For example, taking \(\Omega = B_{1}\) to be the unit ball centered at \(0\) and \(V(x) = 1/|x_{1}|^{\alpha}\) with \(1 \le \alpha < 2\), one has that \eqref{gap} is trivially satisfied by positivity of \(V\) but there exists no nonnegative function \(v\) that satisfies \(-\Delta v + V v \ge 0\) in the sense of distributions in \(\Omega\), other than the trivial one; see \cite{OrsinaPonce:2008}*{Theorem~1.4} and Example~\ref{exampleOrsinaPonce} below.

In contrast, existence of nonnegative supersolutions typically yields nonnegativity of the quadratic form under weaker assumptions on \(V\), as it has been proved in \cite{LP} for \(V \in L^{1}(\Omega)\). 
As we shall see, this property is true even for general Borel potentials \(V\).

One of the novelties of our work is to address this issue by adapting the concept of supersolution to the Schrödinger operator \(- \Delta + V\).
Our approach is based on the existence of a Borel set \(S \subset \Omega\) with the following properties:
\begin{enumerate}[(a)]
	\item{}
	\label{item-230}
	every solution \(u\) of \eqref{eqDirichletProblemIntroduction} satisfies \(u = 0\) almost everywhere in \(S\),
	\item{}
	\label{item-232} if moreover \(u\) is nonnegative, then \(\mu \le 0\) on \(S\), regardless of \(\mu\) on \(\Omega \setminus S\).
\end{enumerate}

The Borel set \(S\)  that we consider is related solely to the positive part \(V^{+}\) and has been introduced in \cite{OP} as the level set
\begin{equation}
\label{eqDefintionS}
S \vcentcolon= \{\widehat{\zeta_{1}} = 0\},
\end{equation}
where \(\widehat{\zeta_{1}}\) denotes the precise representative of the \emph{torsion function} \(\zeta_{1}\)\,, which minimizes the energy functional
\[{}
\xi \in  W_{0}^{1, 2}(\Omega) \cap L^{2}(\Omega; V^{+} \dif x)
\longmapsto \frac{1}{2} \int_{\Omega} (|\nabla \xi|^{2} + V^{+}\xi^{2}) - \int_{\Omega} \xi.
\]
One verifies that \(\widehat{\zeta_{1}}\) exists at every point of \(\Omega\) since \(\zeta_{1}\) can be written as the difference between a continuous and a bounded subharmonic function; see e.g.~\cite{OrsinaPonce:2017}*{Proposition~8.1}.
This set \(S\) has been used in \cite{OP} to investigate the failure of the strong maximum principle for the Schrödinger operator \(- \Delta + V^{+}\) and non-existence of solutions of the associated Dirichlet problem.{}
That \(S\) defined by \eqref{eqDefintionS} satisfies properties \eqref{item-230} and \eqref{item-232} is justified in Section~\ref{sectionDuality} below.
An alternative way of identifying \(S\) without explicitly relying on the torsion function is also available using the Wiener criterion of Dal~Maso and Mosco~\cite{DalMasoMosco}; see \cite{PonceWilmet}.

In a classical situation where \(V^{+} \in K(\Omega)\), the strong maximum principle holds and then one has \(S = \emptyset\). 
However, for general potentials \(V^+\) one should expect to have \(S \neq \emptyset\).{}
For example, if \(a \in \Omega\) and \(V(x) = 1/|x - a|^{2}\), then \(S = \{a\}\).{}
The set \(S\) may be larger than a finite union of singletons: Take a smooth open set \(\omega \Subset \Omega\) and \(V(x) = 1/d(x, \partial\omega)^{2}\), then \(S = \partial\omega\).
Choosing instead \(V(x) = 1/d(x, \omega)^{2}\), one has \(S = \overline\omega\).{}
These two examples are physically related to the localized effect of particles in Quantum Mechanics that has been beautifully addressed in recent years by I.~Díaz~\cites{diaz1,diaz}.

From \cite{OP}, we know that \(\Omega \setminus S\) can be written as a disjoint union of its Sobolev-connected components, namely
\begin{equation}
\label{eqDecomposition}
\Omega \setminus S = \bigcup_{i \in I}{D_{i}}\,,
\end{equation}
where the index set \(I\) is finite or countably infinity.
In the terminology introduced in \cite{OP}*{Sections~10 and~11}, each set \(D_{i}\) is Sobolev-open in the sense that there exists \(v_{i} \in W_{0}^{1, 2}(\Omega)\) whose precise representative \(\widehat{v_{i}}\) is defined everywhere in \(\Omega\) and \(D_{i} = \{\widehat{v_{i}} > 0\}\).
In addition, \(D_{i}\) is Sobolev-connected meaning that it cannot be further written as a disjoint finite reunion of Sobolev-open subsets.
When \(S\) is closed in \(\Omega\) with respect to the Euclidean topology, the sets \(D_{i}\) coincide with the usual connected components of \(\Omega \setminus S\).

Decomposition \eqref{eqDecomposition} propagates to every \(\xi \in  W_{0}^{1, 2}(\Omega) \cap L^{2}(\Omega; V^{+} \dif x)\) as we prove in Section~\ref{sectionCutoff} that \(\xi = 0\) almost everywhere in \(S\) and, for every \(i \in I\),
\(\xi\chi_{D_{i}} \in W_{0}^{1, 2}(\Omega)\)\,;
see Proposition~\ref{propositionSobolevCutoff}.{}
Thus, \(\xi\) can be written as a sum of Sobolev functions,
\begin{equation}
\label{eqDecompositionFunction}
\xi = \sum_{i \in I}{\xi \chi_{D_{i}}}
\quad \text{almost everywhere in \(\Omega\).}
\end{equation}

In the spirit of the decompositions \eqref{eqDecomposition} and \eqref{eqDecompositionFunction}, we localize the APP principle on each component $D_i$\,: 

\begin{theorem}
	\label{theoremPoincare}
	Let \(i \in I\).
	If \eqref{eqDirichletProblemIntroduction} has a nonnegative distributional solution \(u\) for a finite Borel measure \(\mu\) in \(\Omega\) such that \(\mu \ge 0\) in \(D_{i}\) and  \(\int_{D_{i}} u > 0\), then
	\begin{equation}
	\label{eqPoincareWeak}
	\int_{D_{i}} (|\nabla\xi|^{2} + V\xi^{2}) 
	\ge 0
	\quad \text{for every \(\xi \in W_{0}^{1, 2}(\Omega) \cap L^{2}(\Omega; V^{+} \dif x)\).} 
	\end{equation}
\end{theorem}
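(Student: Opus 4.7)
The approach is to adapt the classical AAP test-function argument to the Sobolev-open component \(D_{i}\). Formally, for a positive supersolution \(u\) of \(-\Delta u+Vu=\mu\), the pointwise identity
\begin{equation*}
\nabla u\cdot\nabla\!\Bigl(\frac{\xi^{2}}{u}\Bigr)=|\nabla\xi|^{2}-\Bigl|\nabla\xi-\frac{\xi}{u}\nabla u\Bigr|^{2},
\end{equation*}
combined with testing the equation against \(\xi^{2}/u\), yields
\begin{equation*}
\int(|\nabla\xi|^{2}+V\xi^{2})=\int\Bigl|\nabla\xi-\frac{\xi}{u}\nabla u\Bigr|^{2}+\int\frac{\xi^{2}}{u}\dif\mu\ge 0,
\end{equation*}
whenever \(\xi\) is supported where \(\mu\ge 0\) and \(u>0\). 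The plan is to make this rigorous on \(D_{i}\).

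As a first step I would reduce to bounded test functions concentrated in \(D_{i}\). By Proposition~\ref{propositionSobolevCutoff}, \(\xi\chi_{D_{i}}\) lies in \(W_{0}^{1,2}(\Omega)\cap L^{2}(\Omega;V^{+}\dif x)\), and the decomposition \eqref{eqDecompositionFunction} shows that \(\int_{D_{i}}(|\nabla\xi|^{2}+V\xi^{2})\) is unchanged upon this replacement. A truncation \(T_{k}(\xi)\chi_{D_{i}}\), combined with monotone and dominated convergence in \(W_{0}^{1,2}(\Omega)\) and in \(L^{2}(\Omega;V^{+}\dif x)\), further allows me to assume \(\xi\) bounded.

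The second step is to regularize the singular divisor by using \(\varphi_{\eps}=\xi^{2}/(u+\eps)\in W_{0}^{1,2}(\Omega)\cap L^{\infty}(\Omega)\) as a test function. Since \(u\varphi_{\eps}\le\xi^{2}\), the product \(V^{+}u\varphi_{\eps}\le V^{+}\xi^{2}\) is integrable; together with \(\mu\ge 0\) on \(\supp\xi\subset D_{i}\), the distributional equation then forces \(V^{-}u\varphi_{\eps}\) to be integrable uniformly in \(\eps\). Expanding the chain rule exactly as in the formal identity yields
\begin{equation*}
\int_{D_{i}}|\nabla\xi|^{2}+\int_{D_{i}}V\xi^{2}\frac{u}{u+\eps}\ge\int_{D_{i}}\Bigl|\nabla\xi-\frac{\xi}{u+\eps}\nabla u\Bigr|^{2}+\int_{D_{i}}\varphi_{\eps}\dif\mu\ge 0.
\end{equation*}

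Letting \(\eps\to 0^{+}\) and using monotone convergence (since \(u/(u+\eps)\uparrow\chi_{\{u>0\}}\)) gives \(\int_{D_{i}}|\nabla\xi|^{2}+\int_{D_{i}\cap\{u>0\}}V\xi^{2}\ge 0\). The main obstacle is then to show that \(\{u=0\}\cap D_{i}\) is Lebesgue-negligible so that this limit coincides with \eqref{eqPoincareWeak}. This is where the hypothesis \(\int_{D_{i}}u>0\) and the Sobolev-connectedness of \(D_{i}\) enter: an argument in the spirit of a strong maximum principle on each component \(D_{i}\), analogous to results of \cite{OP}, should force \(u>0\) almost everywhere on \(D_{i}\), completing the proof.
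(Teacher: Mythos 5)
Your formal identity is exactly the paper's Lemma~\ref{lemmaPoincareVariational}, but that lemma is proved (and is provable) only under the hypotheses \(u \in W_{0}^{1,2}(\Omega)\cap L^{2}(\Omega;V\dif x)\) and that the equation holds \emph{variationally}, i.e.\ \(\int_{\Omega}(\nabla u\cdot\nabla\eta+Vu\eta)\ge\int_{\Omega}f\eta\) for all nonnegative \(\eta\in W_{0}^{1,2}(\Omega)\cap L^{2}(\Omega;V\dif x)\). The hypothesis of Theorem~\ref{theoremPoincare} gives only a distributional solution \(u\in W_{0}^{1,1}(\Omega)\cap L^{1}(\Omega;V\dif x)\) with a measure datum, tested against \(C_{c}^{\infty}(\Omega)\). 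Your step ``expanding the chain rule exactly as in the formal identity'' silently requires (i) that \(\xi^{2}/(u+\eps)\) is an admissible test function, which already presupposes \(\nabla u\in L^{2}\) for the pairing \(\int\nabla u\cdot\nabla\varphi_{\eps}\) and for the square term \(\int|\nabla u|^{2}\xi^{2}/(u+\eps)^{2}\) to be finite, and (ii) a meaning for \(\int\varphi_{\eps}\dif\mu\) when \(\mu\) is a general finite measure. Neither is available: Examples~\ref{exampleSupersolution} and~\ref{exampleNonZero} exhibit nonnegative distributional solutions \(u_{\beta}\) with \(\nabla u_{\beta}\notin L^{2}\) and \(u_{\beta}\notin L^{2}(B_{1};V^{-}\dif x)\) that satisfy the hypotheses of the theorem, so the class of solutions you must handle genuinely escapes the setting in which the test-function computation can be justified. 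The claim that ``the distributional equation forces \(V^{-}u\varphi_{\eps}\) to be integrable uniformly in \(\eps\)'' is circular: the uniform bound is \(V^{-}\xi^{2}\in L^{1}(D_{i})\), which is precisely the Poincaré inequality you are trying to prove.

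The paper circumvents this by never testing the equation for \(u\) itself. It constructs the monotone sequence \((u_{n})_{n\in\N}\) of \emph{duality} solutions of the truncated problems of Proposition~\ref{increasingsequence}; these are bounded and lie in \(W_{0}^{1,2}(\Omega)\cap L^{2}(\Omega;V^{+}\dif x)\), so Lemma~\ref{lemmaPoincareVariational} applies to each \(u_{n}\) with the potential \(V^{+}\) alone (no \(V^{-}\) on the left-hand side), and the comparison \(u_{n}\le u\chi_{D_{i}}\) is obtained through the duality formulation, not through pointwise testing. Passing to the limit by Fatou yields the weighted inequality of Theorem~\ref{theoremPoincare-bisA}, and Theorem~\ref{theoremPoincare} then follows by applying that result to \(V_{\alpha}=V^{+}-\alpha V^{-}\) with datum \(\mu+(1-\alpha)V^{-}u\) and letting \(\alpha\to1\) — a second limit needed because \(\mu(D_{i})\) may vanish even when \(\int_{D_{i}}u>0\). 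Your final step (that \(\{u=0\}\cap D_{i}\) is negligible) is correct in spirit and is indeed supplied by Corollary~\ref{MaximumPrinciple}, but it does not repair the main gap above.
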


We recall that $u$ is a distributional solution of \eqref{eqDirichletProblemIntroduction} whenever $u \in W^{1, 1}_0(\Omega) \cap L^1(\Omega; V \dif x)$ and 
\begin{equation}
\label{eq-379}
-\Delta u + V u = \mu 
\quad \text{in the sense of distributions in \(\Omega\).}
\end{equation}
We show in Sections~\ref{sectionCutoff} and~\ref{sectionDuality} that this equation can be localized in \(D_{i}\) so that \(u\chi_{D_{i}}\) satisfies
\[
-\Delta (u\chi_{D_{i}}) + V u\chi_{D_{i}} = \mu\lfloor_{D_{i}}{} - \tau
\quad \text{in the sense of distributions in \(\Omega\),}
\]
where the measure \(\mu\lfloor_{D_{i}}\) is defined by \(\mu\lfloor_{D_{i}}(A) = \mu(D_{i} \cap A)\) for every Borel set \(A \subset \Omega\) and $\tau$ is a measure carried by $S$, that is $|\tau|(\Omega \setminus S) = 0$.
Under the assumptions of Theorem~\ref{theoremPoincare}, we have that $\tau$ is nonnegative and we may apply the strong maximum principle for \(-\Delta + V^{+}\) in \(D_{i}\) to deduce that
\[{}
\liminf_{r \to 0}{\fint_{B_{r}(x)} u} > 0
\quad \text{for every \(x \in D_{i}\)\,,}
\]
where \(\fint_{B_{r}(x)}\) is the average integral on the ball \(B_{r}(x)\).

Theorem~\ref{theoremPoincare} thus provides one with a Poincaré inequality that gives the imbedding
\begin{equation*}
	W_{0}^{1, 2}(\Omega) \cap L^{2}(\Omega; V^{+} \dif x) \subset L^{2}(D_{i}; V^{-} \dif x),
\end{equation*}
which can be seen as a compatibility condition between $V^+$ and $V^-$ to have existence of supersolutions in \(D_{i}\)\,.{}
When \eqref{eqPoincareWeak} is satisfied, we consider the vector space
\[
H_i(\Omega) \vcentcolon= \Bigl\{ \xi\chi_{D_{i}} \in W_{0}^{1, 2}(\Omega) : \xi \in W^{1, 2}_0(\Omega)\cap  L^2(\Omega; V^+ \dif x) \Bigr\}
\]
equipped with the seminorm
\[{}
\|\xi\|_{i}
\vcentcolon= \biggl[\int_{D_{i}} (|\nabla\xi|^{2} + V\xi^{2}) \biggr]^{\frac{1}{2}}.
\]
We then denote by \(\mathcal{H}_{i}(\Omega)\) the completion of \(H_{i}(\Omega)\).

In the spirit of the concept of weighted spectral gap by Pinchover and Tintarev~\cite{PinTin},
it is natural to investigate whether there is room for an improvement of \eqref{eqPoincareWeak} in the form
\begin{equation}
	\label{eqPoincareStrong}
	\int_{D_{i}} (|\nabla \xi|^2+V\xi^2) \ge \int_{D_{i}}  w_{i} \xi^2{}
	\quad \text{for every \(\xi \in W_{0}^{1, 2}(\Omega) \cap L^{2}(\Omega; V^{+} \dif x)\)},
\end{equation}
for some measurable weight \(w_{i} : D_{i} \to (0, +\infty)\).
In contrast with \cite{PinTin}, we do not require \(w_{i}\) to be continuous.
An inequality of the type \eqref{eqPoincareStrong} is typically used to prove decay of solutions associated to the operator $-\Delta + V$ and long-time behavior of the $L^p$ norm for the Schr\"odinger semigroup in unbounded domains; see \cites{Agmon, simonbis, Pinchover}.
For the critical Hardy potential \(V = -\kappa/|x|^{2}\), estimate \eqref{eqPoincareStrong} holds with \(w_{i}\) constant and such an inequality has been used to investigate nonlinear eigenvalue problems and asymptotics of the heat equation; see \cites{BrezisVazquez,VazquezZuazua,DupaigneNedev,DavilaDupaigne}.

\begin{theorem}
	\label{theoremPoincare-bisA}
	Let \(i \in I\).{}
	If \eqref{eqDirichletProblemIntroduction} has a nonnegative distributional solution for a finite Borel measure \(\mu\) in \(\Omega\) such that \(\mu \ge 0\) in \(D_{i}\) and \(\mu(D_{i}) > 0\), then there exists a measurable function \(w_{i} : D_{i} \to (0, +\infty)\) such that \eqref{eqPoincareStrong} holds.
\end{theorem}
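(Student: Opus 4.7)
The plan is to combine Theorem~\ref{theoremPoincare} with a Picone-type ground-state transform, adding a further step to upgrade the resulting measure-valued inequality into a Lebesgue-weighted one.

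First, I would localize the equation. By the tools of Sections~\ref{sectionCutoff} and~\ref{sectionDuality}, $u\chi_{D_{i}}$ satisfies a distributional PDE on $\Omega$ with right-hand side $\mu\lfloor_{D_{i}} - \tau$, where $\tau \ge 0$ is carried by $S$. Since $\mu \geq 0$ on $D_{i}$ and $\mu(D_{i}) > 0$, the strong maximum principle for $-\Delta + V^{+}$ on $D_{i}$ quoted after Theorem~\ref{theoremPoincare} provides a precise representative $\widehat{u}$ that is strictly positive everywhere on $D_{i}$. In particular $\int_{D_{i}} u > 0$, so Theorem~\ref{theoremPoincare} applies and delivers the unweighted Poincaré inequality \eqref{eqPoincareWeak}.

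Second, I would derive a Picone-type inequality. For $\xi \in W^{1,2}_{0}(\Omega) \cap L^{2}(\Omega; V^{+}\,dx)$, after approximating by bounded truncations, one may test the equation $-\Delta u + Vu = \mu$ against $\xi^{2}/(u+\varepsilon)$. Combining with the algebraic identity
\[
|\nabla \xi|^{2} - \nabla u \cdot \nabla\bigl(\xi^{2}/(u+\varepsilon)\bigr) = (u+\varepsilon)^{2}\bigl|\nabla(\xi/(u+\varepsilon))\bigr|^{2} \ge 0,
\]
and letting $\varepsilon \to 0^{+}$ by Fatou and monotone convergence, one obtains
\[
\int_{D_{i}} (|\nabla \xi|^{2} + V \xi^{2}) \;\geq\; \int_{D_{i}} \frac{\xi^{2}}{\widehat{u}}\, d\mu\lfloor_{D_{i}}.
\]

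Third, I would upgrade this measure-valued estimate into a Lebesgue-weighted one by iterating Theorem~\ref{theoremPoincare}. Whenever a measurable $w \ge 0$ on $D_{i}$ satisfies $w\widehat{u}\,dx \leq (1-\alpha)\,\mu\lfloor_{D_{i}}$ as measures for some $\alpha \in (0,1)$, the function $u$ remains a nonnegative distributional supersolution of $-\Delta + (V - w)$ on $D_{i}$ with nontrivial nonnegative right-hand side, so Theorem~\ref{theoremPoincare} applied to the potential $V - w$ gives $\int_{D_{i}} (|\nabla \xi|^{2} + V\xi^{2}) \geq \int_{D_{i}} w\xi^{2}$. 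Using the Radon--Nikodym decomposition of $\mu\lfloor_{D_{i}}$ along a compact exhaustion of $D_{i}$ on each piece of which $\widehat{u}$ has positive lower and upper bounds, I would produce a sequence of such admissible weights and sum them with geometric coefficients to obtain a strictly positive $w_{i} : D_{i} \to (0,+\infty)$.

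The main obstacle is to guarantee that $w_{i}$ is \emph{strictly} positive on all of $D_{i}$ when $\mu\lfloor_{D_{i}}$ is singular with respect to Lebesgue measure: the direct perturbation step then produces only $w \equiv 0$, because any absolutely continuous measure $w\widehat{u}\,dx$ dominated by a singular one must vanish almost everywhere. In that case the strictly positive weight has to be built indirectly using auxiliary supersolutions together with the Sobolev-connected structure of $D_{i}$ from Proposition~\ref{propositionSobolevCutoff}, for instance via a diagonal construction exploiting that $\mathcal{H}_{i}(\Omega)$ is a separable Hilbert space that embeds continuously into $L^{2}(K,dx)$ for each compact $K \subset D_{i}$ on which $\widehat{u}$ is bounded away from zero and infinity.
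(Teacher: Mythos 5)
Your outline correctly identifies the two easy reductions (localization to \(D_{i}\) and a Picone/ground-state computation), but it has a genuine gap exactly where the theorem is hard, and you acknowledge it yourself: when \(\mu\lfloor_{D_{i}}\) is singular with respect to Lebesgue measure, or merely vanishes on a large subset of \(D_{i}\), your Picone inequality \(\int_{D_{i}}(|\nabla\xi|^{2}+V\xi^{2})\ge\int_{D_{i}}(\xi^{2}/\widehat{u})\,d\mu\) and the ensuing perturbation step only produce weights \(w\) with \(w\,\widehat{u}\,dx\le(1-\alpha)\,\mu\lfloor_{D_{i}}\), hence \(w=0\) almost everywhere off the support of the absolutely continuous part of \(\mu\). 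The ``diagonal construction using auxiliary supersolutions'' you invoke to repair this is not an argument; it is a restatement of the problem. The paper resolves precisely this point by a quantitative self-improvement of positivity: it takes the duality solution \(z_{i}\) of \(-\Delta z_{i}+V^{+}z_{i}=\mu\lfloor_{D_{i}}\) and uses the comparison \(z_{i}\ge\zeta_{Q(z_{i})}\) from \cite{OP} (Proposition~7.1), where \(Q\) is a bounded function with \(Q(t)>0\) for \(t>0\). Since \(z_{i}>0\) almost everywhere in \(D_{i}\) (because \(\mu(D_{i})>0\) and \(\widehat{G_{x}}>0\) on \(D_{i}\)), the datum \(Q(z_{i})\) is a strictly positive bounded function on all of \(D_{i}\) even for singular \(\mu\). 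Running the iteration of Proposition~\ref{increasingsequence} with this datum, applying Lemma~\ref{lemmaPoincareVariational} to the potential \(V^{+}\), and passing to the limit then yields \eqref{eqPoincareStrong} with the everywhere positive weight \(w_{i}=Q(z_{i})/\widetilde{u}\). Nothing in your sketch produces an analogue of this step.

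Two further problems. First, your Picone computation tests the distributional equation against \(\xi^{2}/(u+\varepsilon)\); but a distributional solution only lies in \(W_{0}^{1,1}(\Omega)\cap L^{1}(\Omega;V\,dx)\), so this test function is not admissible and the manipulation of \(\nabla u\cdot\nabla\bigl(\xi^{2}/(u+\varepsilon)\bigr)\) is not justified. This is why the paper works with the bounded variational approximations \(u_{n}\in W_{0}^{1,2}(\Omega)\cap L^{2}(\Omega;V^{+}\,dx)\) and applies Lemma~\ref{lemmaPoincareVariational} to them rather than to \(u\). Second, your appeal to Theorem~\ref{theoremPoincare} is circular relative to the paper's architecture: in Section~\ref{sectionlast} that theorem is deduced \emph{from} Theorem~\ref{theoremPoincare-bisA} (applied to the potentials \(V^{+}-\alpha V^{-}\)), so it cannot be used as an ingredient here without an independent proof; moreover, applying it to the modified potential \(V-w\) would also require checking that the decomposition \(\Omega\setminus S=\bigcup_{i}D_{i}\), which depends on the positive part of the potential, is unchanged.
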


We prove Theorem~\ref{theoremPoincare-bisA} in Section~\ref{sectiontheoremPoincare-bisA}.
The possibility that the measure in the statement of Theorem~\ref{theoremPoincare} satisfies \(\mu(D_{i}) = 0\) does not exclude the existence of another one with \(\widetilde\mu \ge 0\) in \(D_{i}\) and \(\widetilde\mu(D_{i}) > 0\), for which a nonnegative solution exists and then the stronger conclusion of Theorem~\ref{theoremPoincare-bisA} holds; see Example~\ref{exampleSupersolution}. 
This phenomenon is due to the fact that distributional solutions need not belong to \(L^{2}(\Omega; V^{-} \dif x)\), see Corollary~\ref{propositionSupersolutionZero}, and is in sharp contrast to what happens with the operator \(-\Delta - \lambda_{1}\),{}
where \(\lambda_{1}\) is the first eigenvalue of the Laplacian.
In such a case, one has \(S = \emptyset\) and if \(u\) solves
\[{}
	\left\{
	\begin{alignedat}{2}
	- \Delta u - \lambda_{1}u & = \mu	&& \quad \text{in \(\Omega\),}\\
	u & = 0 && \quad \text{on \(\partial\Omega\),}
	\end{alignedat}
	\right.{}
\]
for some finite nonnegative measure \(\mu\), then using the first eigenfunction \(\varphi\) of \(-\Delta\) as test function one deduces that necessarily \(\mu = 0\) and \(u = \alpha\varphi\) with \(\alpha \in \R\).

Under the stronger assumption \eqref{eqPoincareStrong},
any element $\xi \in \mathcal{H}_i(\Omega)$ can be naturally associated to a function in $L^2(D_i; w_i \dif x)$
and in particular is well-defined almost everywhere in \(D_{i}\)\,.{}
Moreover, using the direct method of the Calculus of Variations one easily deduces that, for every \(h \in L^{2}(D_{i}; w_{i} \dif x)\), there exists a unique minimizer \(\theta_{i, h}\) of the energy functional
\begin{equation}
	\label{eqFunctional}
E(\xi) = \frac{1}{2} \|\xi\|_{i}^{2} - \int_{D_{i}} w_{i}h\xi{}
\quad \text{with \(\xi \in \mathcal{H}_{i}(\Omega)\).}
\end{equation}
While  \(\theta_{i, h}\) satisfies the Euler-Lagrange equation in \(\mathcal{H}_{i}(\Omega)\), it need not be true that \(\theta_{i, h}\) verifies \eqref{eqDirichletProblemIntroduction} in the sense of distributions for some finite measure \(\mu\); see Example~\ref{exampleFailureDistributions}.
We prove nevertheless in Section~\ref{sectionlast} the following

\begin{theorem}
	\label{theoremPoincare-bisB}
	Let \(i \in I\).{}
	If \eqref{eqDirichletProblemIntroduction} has a nonnegative distributional solution \(u\) for a finite Borel measure \(\mu\) in \(\Omega\) such that \(\mu \ge 0\) in \(D_{i}\) and \(\mu(D_{i}) > 0\), then there exists a bounded measurable function \(w_{i} : D_{i} \to (0, +\infty)\) satisfying \eqref{eqPoincareStrong} and such that, for every \(h \in L^{2}(D_{i}; w_{i} \dif x)\) with \(0 \le h \le u\), 
	the minimizer of \eqref{eqFunctional} is a nonnegative distributional solution of \eqref{eqDirichletProblemIntroduction} for a finite Borel measure \(\mu_{i,h}\) in $\Omega$ that verifies
		\[{}
		\mu_{i, h} = w_{i} h \dif x \quad \text{on $D_{i}$\,.}
		\]
\end{theorem}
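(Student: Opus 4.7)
The plan is to build $w_{i}$ by truncating the weight provided by Theorem~\ref{theoremPoincare-bisA}. If $\widetilde{w}_{i}$ denotes that weight, I would set
\[
w_{i} \vcentcolon= \min\bigl\{\widetilde{w}_{i},\,1/(1 + u^{2})\bigr\},
\]
which is bounded, strictly positive on $D_{i}$, and, since $w_{i} \le \widetilde{w}_{i}$, preserves the Poincaré inequality~\eqref{eqPoincareStrong}. The extra factor $1/(1+u^{2})$ is chosen so that the bound $w_{i} u^{2} \le 1$ forces $u \in L^{2}(D_{i}; w_{i}\dif x)$; hence every admissible datum $h$ with $0 \le h \le u$ defines a continuous linear form on $\mathcal{H}_{i}(\Omega)$ via $\xi \mapsto \int_{D_{i}} w_{i} h \xi\dif x$. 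Existence and uniqueness of the minimizer $\theta \vcentcolon= \theta_{i,h}$ of~\eqref{eqFunctional} then follow from the direct method on the Hilbert space $\mathcal{H}_{i}(\Omega)$ via strict convexity, weak lower semicontinuity and coercivity, and its Euler--Lagrange equation reads
\[
\int_{D_{i}}(\nabla \theta \cdot \nabla \xi + V \theta \xi)\dif x = \int_{D_{i}} w_{i} h \xi\dif x \quad \text{for every } \xi \in \mathcal{H}_{i}(\Omega).
\]
Testing with $\theta^{-} \in \mathcal{H}_{i}(\Omega)$ and using \eqref{eqPoincareStrong} together with $w_{i} > 0$ yields $\theta \ge 0$ a.e.\ in $D_{i}$.

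The central step is the pointwise comparison $\theta \le u$ a.e.\ in $D_{i}$. I would establish it through an approximation scheme: replace $V$ by $V \wedge n$ and $h$ by $h \wedge n$, solve the regularized variational problems, where classical comparison against truncations $T_{k}(u)$ applies, and pass to the limit using the uniform bounds supplied by~\eqref{eqPoincareStrong}. With $0 \le \theta \le u$ in hand and $\theta \in \mathcal{H}_{i}(\Omega) \subset W^{1,2}_{0}(\Omega)$, the function $\theta$ extended by zero outside $D_{i}$ belongs to $W^{1,1}_{0}(\Omega) \cap L^{1}(\Omega; V\dif x)$, since $u$ does.

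It then remains to identify the distribution $-\Delta \theta + V \theta$ on $\Omega$ as a finite Borel measure $\mu_{i,h}$ whose restriction to $D_{i}$ is $w_{i} h \dif x$. Because $\theta$ vanishes outside $D_{i}$, the localization framework of Sections~\ref{sectionCutoff} and~\ref{sectionDuality} produces a decomposition
\[
-\Delta \theta + V \theta = w_{i} h \dif x - \tau_{i,h} \quad \text{in } \mathcal{D}'(\Omega),
\]
with $\tau_{i,h}$ a Borel measure carried by $S$; the comparison $\theta \le u$ bounds the total variation of $\tau_{i,h}$ by that of the analogous measure associated to $u$, which is finite by hypothesis. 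I expect the main obstacle to be precisely the comparison $\theta \le u$, since $u$ does not necessarily lie in $\mathcal{H}_{i}(\Omega)$ and cannot be used directly as a test function; the approximation through truncations of $V$, $h$ and $u$, combined with the Poincaré estimate~\eqref{eqPoincareStrong}, is what allows one to transfer the classical comparison principle to the limit.
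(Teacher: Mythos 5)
You correctly isolate the two essential difficulties (the comparison \(\theta_{i,h}\le u\) and the identification of \(-\Delta\theta_{i,h}+V\theta_{i,h}\) as a finite measure), but neither is actually resolved, and one auxiliary claim is false. First, the inclusion \(\mathcal{H}_{i}(\Omega)\subset W_{0}^{1,2}(\Omega)\) does not hold: \(\mathcal{H}_{i}(\Omega)\) is an abstract completion whose elements are only realized in \(L^{2}(D_{i};w_{i}\dif x)\), and Example~\ref{exampleSupersolution} exhibits \(u_{\alpha}\in\mathcal{H}(D)\setminus W_{0}^{1,2}(B_{1})\). Hence \(\nabla\theta_{i,h}\) and \(\Delta\theta_{i,h}\) are not defined a priori, and the Euler--Lagrange identity you write only makes sense in the abstract inner product of \(\mathcal{H}_{i}(\Omega)\). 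Example~\ref{exampleFailureDistributions} shows that in general the minimizer is \emph{not} a distributional solution for any finite measure, so the conclusion genuinely depends on the particular choice of \(w_{i}\) and on \(h\le u\), not merely on the validity of \eqref{eqPoincareStrong}.

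Second, your weight \(w_{i}=\min\{\widetilde w_{i},1/(1+u^{2})\}\) secures \(u\in L^{2}(D_{i};w_{i}\dif x)\) but discards the structural bound the argument needs: the paper chooses \(w_{i}\le Q(z_{i})/u\) precisely so that \(0\le w_{i}h\le Q(z_{i})\), which is what permits comparing, at the level of \emph{duality} solutions, the monotone iteration \((v_{n})_{n\in\N}\) with datum \(w_{i}h\chi_{D_{i}}\) against the iteration \((u_{n})_{n\in\N}\) with datum \(Q(z_{i})\) from the proof of Theorem~\ref{theoremPoincare-bisA}, yielding \(v_{n}\le u_{n}\le u\chi_{D_{i}}\). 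Your route to \(\theta_{i,h}\le u\) --- truncating \(V\) and \(h\) and invoking a classical comparison against \(T_{k}(u)\) --- does not go through: \(u\) is only a \(W_{0}^{1,1}(\Omega)\cap L^{1}(\Omega;V\dif x)\) distributional solution and is not admissible in the truncated variational problems, and \(V\wedge n\) truncates \(V^{+}\) rather than taming \(V^{-}\). The paper circumvents all of this by (i) running the monotone scheme of Proposition~\ref{increasingsequence}, (ii) identifying its pointwise limit with \(\theta_{i,h}\) via Proposition~\ref{propositionLimitMinimizer}, and (iii) passing to the limit in the duality formulation to conclude that \(\theta_{i,h}\chi_{D_{i}}\) is a duality solution, after which Proposition~\ref{propositionDictionary} produces the finite measure \(\mu_{i,h}\). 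Without showing that \(\theta_{i,h}\) is a duality solution, the localization framework you invoke in the last step has nothing to act on, and the claimed bound on \(|\tau_{i,h}|\) by the total variation of the measure associated with \(u\) has no justification.
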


An interesting open problem is to find a weight $w_{i}$ such that the conclusion of 
Theorem~\ref{theoremPoincare-bisB} holds for every nonnegative $h \in L^{2}(D_{i}; w_{i} \dif x)$.


\section{Proof of Theorem~\ref{theoremAAPCommon}}\label{appendix}

In this section we provide a proof of Theorem~\ref{theoremAAPCommon}. 
We assume that \(N \ge 3\)\,; the case \(N = 2\) requires some adaptation as one needs to modify the definition of the Kato class using the logarithm in replacement of the potential \(1/|z|^{N-2}\).
Let us start recalling the definition of the Kato class $K(\Omega)$.

\begin{definition}\label{katoclass}
A Borel function $V:\Omega\to \mathbb [{-\infty}, +\infty]$ belongs to the Kato class $K(\Omega)$ whenever
\[
\lim_{r\to 0}{\sup_{x \in \Omega} \int_{B_{r}(x)\cap\Omega}\frac{|V(y)|}{|x-y|^{N-2}} \dif y} = 0.
\]
\end{definition}

The proof of Theorem~\ref{theoremAAPCommon} is based on the following lemma:

\begin{lemma}\label{morreyadams}
If\/ $V\in K(\Omega)$, then for each $\epsilon>0$ there exists $C > 0$ such that
\[
\int_{\Omega} |V|\varphi^2{}
\le \epsilon \int_{\Omega}|\nabla \varphi|^2 + C \int_{\Omega}\varphi^2 \quad \text{for every \(\varphi\in C^\infty_c(\Omega)\).}
\]
\end{lemma}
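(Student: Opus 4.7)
The plan is to decompose $|V| = V_1 + V_2$ with $V_2 \in L^\infty(\Omega)$ producing the $C \int \varphi^2$ term and $V_1$ having small Newton potential, producing the $\epsilon \int |\nabla \varphi|^2$ term. First I would verify that the Kato condition combined with the boundedness of $\Omega$ implies $V \in L^1(\Omega)$: from the pointwise inequality $\int_{B_r(x)} |V(y)| \dif y \le r^{N-2} \eta(r)$, where $\eta(r) \vcentcolon= \sup_{x \in \Omega} \int_{B_r(x) \cap \Omega} |V(y)|/|x-y|^{N-2} \dif y$, a finite cover of $\Omega$ by balls of radius $r$ yields integrability. Given $\epsilon > 0$, I would set $V_2 = |V|\chi_{\{|V| \le M\}}$ and $V_1 = |V|\chi_{\{|V| > M\}}$, so that $\int V_2 \varphi^2 \le M \int \varphi^2$. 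The Newton potential $W \vcentcolon= G \ast V_1$ (with $G(z) = c_N |z|^{2-N}$) then decomposes as
\[
W(x) = \int_{B_r(x) \cap \Omega} \frac{c_N V_1(y)}{|x-y|^{N-2}} \dif y + \int_{\Omega \setminus B_r(x)} \frac{c_N V_1(y)}{|x-y|^{N-2}} \dif y,
\]
bounded by $c_N \eta(r) + c_N r^{2-N} \int_{\{|V| > M\}} |V|$. Choosing $r$ small and then $M$ large makes $\|W\|_\infty \le \alpha$ with $\alpha > 0$ as small as we wish.

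The key estimate is that $\|W\|_\infty \le \alpha < 1$ implies $\int V_1 \varphi^2 \le C_N \sqrt{\alpha} \int |\nabla \varphi|^2$ for every $\varphi \in C_c^\infty(\Omega)$. Since $-\Delta W = V_1$ distributionally (after absorbing the normalizing constant), integration by parts gives $\int V_1 \varphi^2 = 2 \int \varphi \nabla W \cdot \nabla \varphi$, and Cauchy-Schwarz yields
\[
\int V_1 \varphi^2 \le 2 \Bigl(\int \varphi^2 |\nabla W|^2 \Bigr)^{1/2} \Bigl(\int |\nabla \varphi|^2 \Bigr)^{1/2}.
\]
The factor $\int \varphi^2 |\nabla W|^2$ is then controlled using the identity $|\nabla W|^2 = \nabla \cdot (W \nabla W) + V_1 W$, integrating against $\varphi^2$, and invoking $\|W\|_\infty \le \alpha$ together with Young's inequality to obtain $(1-\alpha) \int \varphi^2 |\nabla W|^2 \le \alpha \int |\nabla \varphi|^2 + \alpha \int V_1 \varphi^2$. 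Substituting back produces a quadratic inequality in $\int V_1 \varphi^2$ whose solution is $\int V_1 \varphi^2 \le C_N \sqrt{\alpha} \int |\nabla \varphi|^2$. Choosing $\alpha$ so that $C_N \sqrt{\alpha} \le \epsilon$ and combining with the bound on $V_2$ finishes the proof, with $C = M$ in the statement.

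The main obstacle I anticipate is the regularity required to justify the integration by parts: since $V_1 \in L^1(\Omega)$ only, the Newton potential $W$ belongs to $W^{1,q}_{loc}$ for $q < N/(N-1)$, so $|\nabla W|^2$ is not a priori locally integrable. I would circumvent this by first mollifying $V_1$ into smooth functions $V_1^\delta$ with compact support, noting that $\|G \ast V_1^\delta\|_\infty$ remains bounded by $\|W\|_\infty$, carrying out the computation for the smooth problem where every step is legitimate, and finally passing to the limit $\delta \to 0$ via monotone convergence on the left-hand side.
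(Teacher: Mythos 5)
Your argument is correct, but it follows a genuinely different route from the paper. The paper (following Zamboni) proves the sharp local estimate
\[
\int_{\Omega}|V|\varphi^{2}\le \Bigl(\,\sup_{x\in\Omega}\int_{\supp\varphi}\frac{|V(y)|}{|x-y|^{N-2}}\dif y\Bigr)\int_{\Omega}|\nabla\varphi|^{2}
\]
directly, via the Riesz-potential representation \(|\varphi(x)|\le C\int|\nabla\varphi(y)|\,|x-y|^{1-N}\dif y\), Cauchy--Schwarz, and Fubini, and then localizes with a partition of unity on a net of balls of radius \(\delta\); the gradient term acquires the small factor \(\eta(\delta)\) and the \(L^{2}\) term comes from the cutoffs. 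You instead split \(|V|=V_{1}+V_{2}\) with \(V_{2}\) bounded and \(V_{1}\) of small Kato norm, and run the classical ``bounded Newtonian potential implies form smallness'' argument (\(-\Delta W=V_{1}\), test with \(\varphi^{2}\), control \(\int\varphi^{2}|\nabla W|^{2}\) by another integration by parts, solve the resulting quadratic inequality to get \(\int V_{1}\varphi^{2}\le C_{N}\sqrt{\alpha}\int|\nabla\varphi|^{2}\)). Your scheme buys a proof that does not need partitions of unity and makes the mechanism ``small Kato norm \(\Rightarrow\) small form bound'' transparent; the paper's scheme avoids the Newtonian potential and its regularity issues altogether and yields the quantitative constant \(\eta(\delta)\) explicitly. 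Two small points to tighten in your write-up: the passage to the limit \(\delta\to0\) in \(\int V_{1}^{\delta}\varphi^{2}\) is by \(L^{1}\) convergence of mollifications against the bounded function \(\varphi^{2}\), not monotone convergence; and to bound \(\|G*V_{1}^{\delta}\|_{\infty}\) you need the Kato supremum over points in a \(\delta\)-neighborhood of \(\Omega\), which follows from the supremum over \(\Omega\) up to a factor \(2^{N-2}\) by projecting onto \(\overline{\Omega}\). Neither affects the validity of the argument.
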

To our knowledge, the proof of this lemma goes back to \cite{schechter}*{Chapter~6, Theorem~9.3}. 
We provide a more direct proof based on \cite{zamboni}. 
The estimate above implies that \(W_{0}^{1, 2}(\Omega) \subset L^{2}(\Omega; V \dif x)\) and that the inclusion is compact.
Indeed, by density of \(C_{c}^{\infty}(\Omega)\) in \(W_{0}^{1, 2}(\Omega)\), the inequality holds with \(\varphi = \xi \in W_{0}^{1, 2}(\Omega)\).
Moreover, if \((\xi_{n})_{n \in \N}\) is a bounded sequence in \(W_{0}^{1, 2}(\Omega)\), then by the Rellich-Kondrashov Compactness Theorem we may extract a subsequence such that \(\xi_{n_{j}} \to u\) in \(L^{2}(\Omega)\) for some \(u \in W_{0}^{1, 2}(\Omega)\).{}
Using the inequality above for \(\xi_{n_{j}} - u\) we then have, for every \(\epsilon > 0\),{}
\[{}
\limsup_{j \to \infty}{\int_{\Omega} |V|(\xi_{n_{j}} - u)^2}
\le \epsilon \limsup_{j \to \infty}\int_{\Omega}|\nabla (\xi_{n_{j}} - u)|^2.
\]
Since \(\epsilon\) is arbitrary, the convergence \(\xi_{n_{j}} \to u\) in \(L^{2}(\Omega; V \dif x)\) then follows.

\resetconstant
\begin{proof}[Proof of Lemma~\ref{morreyadams}]
We first show that, for every \(\varphi\in C_c^\infty(\Omega)\),
\begin{equation}
	\label{eq-493}
\int_{\Omega}|V| \varphi^2{}
\le \biggl( \sup_{x \in \Omega} \int_{\supp{\varphi}} \frac{|V(y)|}{|x-y|^{N-2}} \dif y  \biggr) \int_{\Omega}|\nabla \varphi|^2.
\end{equation}
This inequality is proved in \cite{schechter}*{Chapter~6, Theorem~8.8} using Bessel potentials, but here we follow a simpler idea from \cite{zamboni}: 
Since
\[
|\varphi(x)|{}
\le \Cl{eq-507} \int_{\Omega}\frac{|\nabla \varphi(y)|}{|x-y|^{N-1}} \dif y,
\]
we get
\begin{equation}\label{19:31}
\begin{split}
\int_{\Omega}|V(x)| |\varphi(x)|^{2} \dif x 
& \le \Cr{eq-507} \int_{\Omega}\left(\int_{\Omega}\frac{|V(x)| |\varphi(x)|}{|x-y|^{N-1}} \dif x\right) |\nabla \varphi(y)| \dif y\\
& \le \Cr{eq-507} \biggl(\int_{\Omega}\left(\int_{\Omega} \frac{|V(x)| |\varphi(x)|}{|x-y|^{N-1}} \dif x \right)^2 \dif y\biggr)^{\frac12} \left(\int_{\Omega}|\nabla \varphi (y)|^2 \dif y\right)^{\frac12}.
\end{split}
\end{equation}
Note that
\[{}
\left(\int_{\Omega} \frac{|V(x)| |\varphi(x)|}{|x-y|^{N-1}} \dif x \right)^2{}
\le \left(\int_{\supp{\varphi}} \frac{|V(z)|}{|z-y|^{N-1}} \dif z\right) \left(\int_{\Omega}\frac{|V(x)| |\varphi(x)|^2}{|x-y|^{N-1}} \dif x\right){}
\]
and
\[{}
\int_{\Omega} \frac{\dif y}{|z-y|^{N-1}|x-y|^{N-1}}
\le \frac{\Cl{cte-526}}{|z-x|^{N-2}}.
\]
From both estimates and Fubini's theorem, we get 
\[
\begin{split}
\int_{\Omega}\biggl(\int_{\Omega}\frac{|V(x)| |\varphi(x)|}{|x-y|^{N-1}} \dif x \biggr)^2 \dif y
& \le \Cr{cte-526} \int_{\supp{\varphi}}\int_{\Omega}   \frac{|V(z)||V(x)| |\varphi(x)|^{2}}{|z-x|^{N-2}} \dif x \dif z\\
& \le \Cr{cte-526} \biggl(\sup_{x\in\Omega}\int_{\supp{\varphi}} \frac{|V(z)|}{|z-x|^{N-2}} \dif z \biggr) \int_{\Omega} |V(x)||\varphi(x)|^2 \dif x.
\end{split}
\]
Combining this estimate with \eqref{19:31}, we obtain \eqref{eq-493}.

To conclude, we cover $\Omega$ with a regular net of balls $B_{{\delta}}(x_i)$ with $i=1,\dots, M$, where the number of overlaps depends solely on the dimension \(N\).
Let $\psi_1, \dots, \psi_M$ be a smooth partition of unity subordinated to this covering. 
Given \(\varphi \in C_{c}^{\infty}(\Omega)\), we have
\begin{equation}
\label{eq533}
\int_{\Omega} |V|\varphi^2{}
\le \Cl{cte-546} \sum_{i=1}^M \int_{\Omega} |V|(\varphi\psi_i)^2,
\end{equation}
for some constant \(\Cr{cte-546} > 0\) depending on \(N\).
By \eqref{eq-493} applied to \(\varphi\psi_{i}\), we get
\begin{equation}
\label{eq540}
\int_{\Omega}|V| (\varphi\psi_{i})^2
\le \eta(\delta) \int_{\Omega}|\nabla (\varphi\psi_{i})|^2,
\end{equation}
with 
\[
\eta(\delta){}
\vcentcolon= 
\sup_{x \in \Omega} \int_{\Omega \cap B_{\delta}(x)} \frac{|V(y)|}{|x-y|^{N-2}} \dif y.
\]
We have \(0 \le \psi_{i} \le 1\) and we may assume that \(|\nabla\psi_{i}| \le \C/\delta\) for every \(i\).
Combining \eqref{eq533} and \eqref{eq540}, we get
\[
\int_{\Omega} V\varphi^2{}
\le \Cl{eq-568}\eta(\delta) \biggl(\int_{\Omega}|\nabla \varphi|^2 + \frac{1}{\delta^{2}} \int_{\Omega}\varphi^2\biggr),
\]
where \(\Cr{eq-568} > 0\) depends on \(N\).
Since \(V \in K(\Omega)\), we have \(\eta(\delta) \to 0\) as \(\delta \to 0\).{}
Thus, given \(\epsilon > 0\), we may take \(\delta > 0\) small enough so that \(\Cr{eq-568}\eta(\delta) \le \epsilon\).
\end{proof}

\begin{proof}[Proof of Theorem \ref{theoremAAPCommon} ``\(\Longleftarrow\)'']  
We prove the reverse implication, so we assume that
\begin{equation}\label{11:28}
\int_{\Omega}(|\nabla \xi|^2 + V\xi^2){}
\ge 0 \quad \text{for every \(\xi\in W^{1,2}_0(\Omega)\).}
\end{equation}
Our goal is to prove that the minimization problem
\begin{equation}\label{11:39}
\lambda_1{}
=\inf\left\{ \int_{\Omega}(|\nabla \xi|^2+V\xi^2) \ : \ \xi \in W^{1,2}_0(\Omega) \ \mbox{ and } \ \int_{\Omega} \xi^{2} =1\right\}
\end{equation}
achieves its infimum for some nonnegative function \(u\).
Note that, thanks to \eqref{11:28}, the infimum $\lambda_1$ is well-defined and nonnegative. 
Once one knows that the minimum exists, it is standard to show that
\[
-\Delta u + Vu = \lambda_1 u
\quad \text{in the sense of distributions in \(\Omega\).}
\] 

Take a minimizing sequence \((\xi_{n})_{n \in \N}\) of \eqref{11:39}.{}
We show that \((\nabla\xi_{n})_{n \in \N}\) is bounded in \(L^{2}(\Omega; \R^{N})\).{}
To this end, for \(n\) large enough, we apply Lemma~\ref{morreyadams} with \(\epsilon = 1/2\) to get 
\[
\int_{\Omega}|\nabla \xi_n|^2{}
\le (\lambda_1 + 1) - \int_{\Omega}V\xi_n^2{}
\le (\lambda_1 + 1) + \frac{1}{2} \int_{\Omega}|\nabla \xi_n|^2 + C \int_{\Omega}\xi_n^2.
\]
Recalling the normalization condition of $\xi_n$\,, it follows that
\[
\frac{1}{2} \int_{\Omega}|\nabla \xi_n|^2 \le (\lambda_1 + 1) + C.
\]

By boundedness of \((\xi_{n})_{n \in \N}\) in \(W_{0}^{1, 2}(\Omega)\) and compactness of the inclusion \(W_{0}^{1, 2}(\Omega) \subset L^{2}(\Omega; V \dif x)\), we may extract a subsequence \((\xi_{n_{j}})_{j \in \N}\) that converges weakly to some $v$ in $W^{1, 2}_0(\Omega)$ and also strongly in $L^2(\Omega)$ and \(L^{2}(\Omega; V \dif x)\). 
We thus have
\[
\int_{\Omega}(|\nabla v|^2 + Vv^2)
\le \liminf_{j \to\infty}\int_{\Omega}(|\nabla \xi_{n_{j}}|^2 + V\xi_{n_{j}}^2)=\lambda_1,
\]
with $\int_{\Omega} v^{2} =1$.
We deduce that \(v\) is a minimizer of \eqref{11:39}.
Hence, the nonnegative function \(|v|\) is also a minimizer and we have the conclusion by taking \(u = |v|\) and \(\mu = \lambda_{1}u\).
\end{proof}

To prove the direct implication of Theorem~\ref{theoremAAPCommon}, we rely on the following lemma that is a typical variational setting where existence of a positive supersolution yields a weighted Poincaré inequality and is valid for a general Borel function \(V\).

\begin{lemma}
\label{lemmaPoincareVariational}
	Let \(f \in L^{2}(\Omega)\) and \(u \in W_{0}^{1, 2}(\Omega) \cap L^{2}(\Omega; V \dif x)\) be nonnegative functions such that, for every nonnegative \(\xi \in W_{0}^{1, 2}(\Omega) \cap L^{2}(\Omega; V \dif x)\),
	\begin{equation}
		\label{eq631}
	\int_{\Omega}(\nabla u \cdot \nabla\xi + V u \xi){}
	\ge \int_{\Omega} f \xi.
	\end{equation}
	Then, for every \(\xi \in W_{0}^{1, 2}(\Omega) \cap L^{2}(\Omega; V \dif x)\), we have \(f\xi^{2} = 0\) almost everywhere in \(\{u = 0\}\) and
	\begin{equation}
		\label{eq637}
	\int_{\{u > 0\}} (\abs{\nabla \xi}^{2} + V \xi^{2})
	\ge \int_{\{u > 0\}}  \frac{f}{u} \xi^{2}.
	\end{equation}
\end{lemma}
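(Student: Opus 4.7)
The strategy is a Picone-type argument applied to the test function $\varphi_{\varepsilon} \vcentcolon= \xi^{2}/(u+\varepsilon)$ with $\varepsilon > 0$. I first assume that $\xi$ is bounded; then $0 \le 1/(u+\varepsilon) \le 1/\varepsilon$ together with the product/chain rule in $W_{0}^{1,2}(\Omega)$ ensures that $\varphi_{\varepsilon}$ is nonnegative and belongs to $W_{0}^{1,2}(\Omega) \cap L^{2}(\Omega; V \dif x)$, hence is admissible in \eqref{eq631}. A direct expansion yields the Picone identity
\[
|\nabla \xi|^{2} - \nabla u \cdot \nabla \varphi_{\varepsilon} = \Bigl|\nabla \xi - \frac{\xi}{u+\varepsilon}\nabla u\Bigr|^{2},
\]
which combined with \eqref{eq631} gives
\[
\int_{\Omega} |\nabla \xi|^{2} + \int_{\Omega} \frac{Vu\xi^{2}}{u+\varepsilon} \ge \int_{\Omega} \frac{f\xi^{2}}{u+\varepsilon} + \int_{\Omega} \Bigl|\nabla \xi - \frac{\xi}{u+\varepsilon}\nabla u\Bigr|^{2}.
\]

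By Stampacchia's lemma, $\nabla u = 0$ almost everywhere on $\{u=0\}$, so the remainder integral dominates $\int_{\{u=0\}}|\nabla \xi|^{2}$ and the previous display collapses to
\[
\int_{\{u>0\}} |\nabla \xi|^{2} + \int_{\Omega} \frac{Vu\xi^{2}}{u+\varepsilon} \ge \int_{\Omega} \frac{f\xi^{2}}{u+\varepsilon}.
\]
The left-hand side is bounded uniformly in $\varepsilon$ because $|Vu/(u+\varepsilon)| \le |V|$ and $\xi \in L^{2}(\Omega; V \dif x)$. Splitting the right-hand side as $\int_{\{u>0\}} f\xi^{2}/(u+\varepsilon) + \varepsilon^{-1}\int_{\{u=0\}} f\xi^{2}$, the factor $\varepsilon^{-1}$ together with $f, \xi^{2} \ge 0$ forces $\int_{\{u=0\}} f\xi^{2} = 0$, which establishes the first conclusion.

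With the singular contribution gone, I pass to the limit $\varepsilon \to 0$ by dominated convergence on the potential term (handling $V^{+}$ and $V^{-}$ separately, both controlled by $|V|\xi^{2} \in L^{1}(\Omega)$) and by monotone convergence on $\{u>0\}$ for the right-hand side, since $f\xi^{2}/(u+\varepsilon) \nearrow f\xi^{2}/u$ there. This yields \eqref{eq637} for bounded $\xi$. The general case follows by replacing $\xi$ with the truncation $T_{k}(\xi)$ and invoking monotone convergence on each of $|\nabla T_{k}(\xi)|^{2}$, $V^{\pm}T_{k}(\xi)^{2}$ and $f T_{k}(\xi)^{2}/u$ as $k \to \infty$. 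The main delicate points are the admissibility of $\varphi_{\varepsilon}$ as a $W_{0}^{1,2}$-test function when $\xi$ is merely a bounded Sobolev function, and the careful splitting into $V^{+}$ and $V^{-}$ when taking limits, since $V$ is an arbitrary Borel potential without sign control.
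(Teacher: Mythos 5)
Your proposal is correct and follows essentially the same route as the paper: the test function $\xi^{2}/(u+\varepsilon)$ for bounded $\xi$, the observation that $\nabla u=0$ almost everywhere on $\{u=0\}$, domination of the potential term by $|V|\xi^{2}\in L^{1}(\Omega)$, and truncation by $T_{k}(\xi)$ at the end. The only (harmless) variations are that you use the exact Picone identity where the paper uses Young's inequality, and you extract $f\xi^{2}=0$ on $\{u=0\}$ from the $\varepsilon^{-1}$ blow-up directly rather than via Fatou's lemma.
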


\begin{proof}[Proof of Lemma~\ref{lemmaPoincareVariational}]
	Given \(\xi \in W_{0}^{1, 2}(\Omega) \cap L^{2}(\Omega; V \dif x) \cap L^{\infty}(\Omega)\) and \(\epsilon > 0\), we have \(\eta \vcentcolon= \xi^2/(u + \epsilon) \in W_{0}^{1, 2}(\Omega) \cap L^{2}(\Omega; V \dif x)\) and \(\eta\) is nonnegative.{}
	Using \(\eta\) as a test function in \eqref{eq631}, we get
	\[{}
	2\int_{\Omega} \frac{\nabla u \cdot \nabla \xi}{u + \epsilon} \xi
	- \int_{\Omega} \frac{|\nabla u|^2 }{(u + \epsilon)^2}\xi^2
	+ \int_{\Omega}\frac{V u }{u + \epsilon}\xi^2
	\ge \int_{\Omega} \frac{f}{u + \epsilon} \xi^{2}.  
	\]
	Since \(\nabla u = 0\) almost everywhere in \(\{u = 0\}\),
	\[{}
	2\frac{\nabla u \cdot \nabla \xi}{u + \epsilon} \xi{}
	\le \frac{|\nabla u|^2 }{(u + \epsilon)^2}\xi^2 + |\nabla \xi|^{2} \chi_{\{u > 0\}}.
	\]
	Thus,
	\begin{equation}
	\label{eq581}
	\int_{\{u > 0\}} \abs{\nabla \xi}^{2} + 
	\int_{\Omega} \frac{V u }{u + \epsilon}\xi^2
	\ge \int_{\Omega} \frac{f}{u + \epsilon} \xi^2.
	\end{equation}
	Note that, as \(\epsilon \to 0\),
	\[{}
	\frac{V u }{u + \epsilon}\xi^2 
	\to V\xi^2 \chi_{\{u > 0\}}
	\quad \text{and} \quad{}
	\frac{f}{u + \epsilon}\xi^2 \to \frac{f}{u}\xi^2 \chi_{\{u > 0\}} +\infty \,\chi_{\{f\xi^{2} > 0\} \cap \{u = 0\}}.
	\]
	Since
	\[{}
	\Bigl| \frac{V u }{u + \epsilon}\xi^2 \Bigr|{}
	\le |V|\xi^{2} \in L^{1}(\Omega)
	\]
	it follows from Fatou's lemma and \eqref{eq581} that \(\{f\xi^{2} > 0\} \cap \{u = 0\}\) is negligible with respect to the Lebesgue measure.
	Applying  the Dominated Convergence Theorem, we deduce estimate \eqref{eq637} under the additional assumption that \(\xi \in L^{\infty}(\Omega)\).{}
	To get the estimate for a general \(\xi \in W_{0}^{1, 2}(\Omega) \cap L^{2}(\Omega; V \dif x)\), it suffices to apply \eqref{eq637} for \(T_{k}(\xi)\) with \(k \ge 0\) and let \(k \to \infty\).
\end{proof}

\begin{proof}[Proof of Theorem~\ref{theoremAAPCommon} ``\(\Longrightarrow\)'']
	We now assume that \(u \in W_{0}^{1, 2}(\Omega)\) is a nonnegative function that satisfies
	\[{}
	-\Delta u + Vu = \mu{}
	\quad \text{in the sense of distributions in \(\Omega\),}
	\]
	where \(\mu \in L^{2}(\Omega)\) is also nonnegative.
	Since \(V \in K(\Omega)\), by Lemma~\ref{morreyadams} we have \(W_{0}^{1, 2}(\Omega) \subset L^{2}(\Omega; V \dif x) \).{}
	As \(\mu \in L^{2}(\Omega)\), it then follows from the equation satisfied by \(u\) and the density of \(C_{c}^{\infty}(\Omega)\) in \(W_{0}^{1, 2}(\Omega)\),{}
	\[{}
	\int_{\Omega} (\nabla u \cdot \nabla\xi + Vu\xi{})
	= \int_{\Omega} \mu\xi{}
	\quad \text{for every \(\xi \in W_{0}^{1, 2}(\Omega)\).}
	\] 
	Thus, by Lemma~\ref{lemmaPoincareVariational},
	\begin{equation}
	\label{eq671}
		\int_{\{u > 0\}} (\abs{\nabla \xi}^{2} + V \xi^{2})
	\ge \int_{\{u > 0\}}  \frac{\mu}{u} \xi^{2}
	\ge 0
	\quad \text{for every \(\xi \in W_{0}^{1, 2}(\Omega)\).}
	\end{equation}
	Since \(V \in L^{1}(\Omega)\) and \(u\) is a nontrivial nonnegative function such that
	\[{}
	\int_{\Omega} (\nabla u \cdot \nabla\xi + V^{+}u\xi{})
	\ge 0
	\quad \text{for every nonnegative \(\xi \in W_{0}^{1, 2}(\Omega)\),}
	\] 
	by the strong maximum principle, see \cite{bookponce}*{Proposition~22.2}, we have \(u > 0\) almost everywhere in \(\Omega\) and then \eqref{eq671} gives
	\[{}
	\int_{\Omega} (\abs{\nabla \xi}^{2} + V \xi^{2})
	\ge 0
	\quad \text{for every \(\xi \in W_{0}^{1, 2}(\Omega)\).}
	\qedhere
	\]
\end{proof}


\section{Duality solutions for nonnegative potentials}
\label{sectionDualityPositive}

For each \(f \in L^{\infty}(\Omega)\), let \(\zeta_{f}\){}
be the unique minimizer of the energy functional
\begin{equation}
\label{eq731}
\xi \in  W^{1, 2}_0(\Omega)\cap L^2(\Omega; V^+ \dif x) \longmapsto \frac12 \int_{\Omega}(|\nabla \xi|^2+ V^+ \xi^2) - \int_{\Omega}f \xi.
\end{equation}
Denoting by \(\mathcal{M}(\Omega)\) the vector space of finite Borel measures in \(\Omega\), we recall the following notion from  \cite{Malusa_Orsina:1996}:

\begin{definition}
	Given \(\nu \in \mathcal{M}(\Omega)\), a function \(u \in L^{1}(\Omega)\) is a \emph{duality solution} of
\begin{equation}
	\label{eqDualitySolutionV+}
	\left\{
	\begin{alignedat}{2}
	-\Delta  u+ V^{+} u & = \nu && \quad \text{in } \Omega,\\
	 u & = 0 && \quad  \text{on }  \partial \Omega,
	\end{alignedat}
	\right.
\end{equation}
whenever 
\begin{equation}
	\label{eq753}
\int_{\Omega}u f 
= \int_{\Omega} \widehat{\zeta_f} \dif\nu  \quad \text{for every } f \in L^{\infty}(\Omega).
\end{equation}
\end{definition}

One verifies that the precise representative $\widehat{\zeta_f}$ is defined at every point in \(\Omega\), 
that is for every \(x \in \Omega\) there exists \(\widehat{\zeta_{f}}(x) \in \R\) such that
\[{}
\lim_{r \to 0}{\fint_{B_{r}(x)}|\zeta_{f} -\widehat{\zeta_{f}}(x)|}
= 0.
\]
Hence, the integral in the right-hand side of \eqref{eq753} is well-defined.

Every distributional solution is a duality solution, with the same datum.
The converse however need not be true; see Example~\ref{exampleOrsinaPonce}.
A major advantage of dealing with duality solutions is that, in contrast with distributional solutions, they always exist for every \(\nu \in \mathcal{M}(\Omega)\).
For example, if \(g \in L^{\infty}(\Omega)\), then a combination of the Euler-Lagrange equations satisfied by \(\zeta_{f}\) and \(\zeta_{g}\) implies that
\begin{equation}
\label{eq675}
\int_{\Omega} \zeta_{g} f
=
\int_{\Omega} \zeta_{f}\, g
\quad \text{for every } f \in L^{\infty}(\Omega).
\end{equation}
Since $\zeta_{f} = \widehat{\zeta_{f}}$ almost everywhere in \(\Omega\) by Lebesgue's differentiation theorem,
we deduce that $\zeta_{g}$ is the duality solution of \eqref{eqDualitySolutionV+} with datum $\nu = g \dif x$.
Moreover, from the estimate
\begin{equation*}
|\zeta_{g} | \leq \| g \|_{L^{\infty}(\Omega)} \zeta_{1}\,,{}
\end{equation*}
for every \(g \in L^{\infty}(\Omega)\) we then have
\begin{equation}
	\label{eq789}
\widehat{\zeta_{g}} = 0
\quad \text{in \(S\).}
\end{equation}

We investigate in this section a counterpart of \eqref{eq789} that is valid for any duality solution of \eqref{eqDualitySolutionV+}.
We begin with the following general property:

\begin{proposition}
	\label{propositionSZeroae}
	If \(u\) is a duality solution of \eqref{eqDualitySolutionV+} with datum \(\nu \in \mathcal{M}(\Omega)\), then
	\(u = 0\) almost everywhere in \(S\).
\end{proposition}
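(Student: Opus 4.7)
The approach is to show that the precise representative $\widehat{\zeta_f}$ actually vanishes identically on $\Omega$ (not just on $S$) whenever the datum $f \in L^\infty(\Omega)$ is supported in $S$, and then to conclude by testing the duality formula \eqref{eq753} against $f = \sign(u)\chi_S$.

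First I would note that $\zeta_1 = 0$ almost everywhere in $S$: indeed $\widehat{\zeta_1} = 0$ on $S$ by the very definition of $S$, and $\widehat{\zeta_1} = \zeta_1$ almost everywhere by Lebesgue's differentiation theorem. Combined with the inequality $|\zeta_g| \le \|g\|_{L^\infty(\Omega)} \zeta_1$ already displayed just after \eqref{eq675}, this yields $\zeta_g = 0$ almost everywhere in $S$ for every $g \in L^\infty(\Omega)$.

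The crucial intermediate lemma is then: if $f \in L^\infty(\Omega)$ vanishes almost everywhere in $\Omega \setminus S$, then $\zeta_f = 0$ almost everywhere in $\Omega$. Indeed, for an arbitrary $g \in L^\infty(\Omega)$, the symmetry relation \eqref{eq675} together with the observation of the previous paragraph gives
\[
\int_\Omega \zeta_f\, g \;=\; \int_\Omega \zeta_g\, f \;=\; \int_S \zeta_g\, f \;=\; 0.
\]
Since $g$ is arbitrary, $\zeta_f = 0$ almost everywhere in $\Omega$, and consequently $\widehat{\zeta_f}(x) = \lim_{r \to 0} \fint_{B_r(x)} \zeta_f = 0$ at every point $x \in \Omega$.

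To finish, I would test the duality identity \eqref{eq753} against the $L^\infty(\Omega)$ function $f \vcentcolon= \sign(u)\chi_S$, which is supported in $S$ by construction. The intermediate lemma forces $\widehat{\zeta_f} \equiv 0$ on $\Omega$, whence
\[
\int_S |u| \;=\; \int_\Omega u f \;=\; \int_\Omega \widehat{\zeta_f}\dif\nu \;=\; 0,
\]
proving that $u = 0$ almost everywhere in $S$. There is no serious obstacle here; the decisive feature of the argument is that the symmetry \eqref{eq675} lets one swap the roles of $f$ and $g$, thereby upgrading the essentially free \emph{local} vanishing $\zeta_g = 0$ on $S$ (inherited from the torsion bound) into the much stronger \emph{global} vanishing of $\zeta_f$ whenever $f$ is supported in $S$.
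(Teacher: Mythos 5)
Your proof is correct, and it takes a genuinely different route from the paper's. The paper deduces the statement from the pointwise representation formula \eqref{eqRepresentationFormula}, $u(x) = \int_{\Omega} \widehat{G_x}\dif\nu$, imported from \cite{OP}*{Lemma~13.2}: testing the duality formulation of $G_x$ against $\zeta_1$ gives $\int_\Omega G_x = \widehat{\zeta_1}(x) = 0$ for $x \in S$, whence $\widehat{G_x}\equiv 0$ and $u(x)=0$ for a.e.\ $x\in S$. You bypass the Green's function entirely: your intermediate lemma (if $f\in L^\infty(\Omega)$ is carried by $S$ then $\zeta_f=0$ a.e., hence $\widehat{\zeta_f}\equiv 0$) follows from the symmetry relation \eqref{eq675} together with the bound $|\zeta_g|\le\|g\|_{L^\infty(\Omega)}\zeta_1$, both already established, and then a single application of \eqref{eq753} with $f=\sign(u)\chi_S$ gives $\int_S|u|=0$. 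Each step checks out: $f$ is a legitimate $L^\infty$ test function, and $\zeta_f=0$ a.e.\ does force the precise representative to vanish at \emph{every} point, which is what you need since $\nu$ may charge Lebesgue-null sets. Your argument is more self-contained (it is essentially a signed, "tested" version of the computation the paper later uses for \eqref{eq788} in Lemma~\ref{decomposition}), whereas the paper's choice to route through $\widehat{G_x}$ pays off afterwards, since the same representation formula and the orthogonality properties of $G_x$ drive the localization results of Section~\ref{sectionCutoff}; your approach, by contrast, does not obviously yield the sharper quasi-everywhere statement of Proposition~\ref{propositionSZero} or the cutoff Proposition~\ref{cutoffV+}.
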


As we have pointed out, Proposition~\ref{propositionSZeroae} is straightforward when $u = \zeta_{g}$ with $g \in L^{\infty}(\Omega)$.
To prove the proposition in full generality, we use the property that, for every \(\nu \in \mathcal{M}(\Omega)\), the duality solution \(u\) of \eqref{eqDualitySolutionV+}
satisfies the representation formula
\begin{equation}
\label{eqRepresentationFormula}
u(x) 
= \int_{\Omega} \widehat{G_x} \dif\nu
\quad{}
\text{for almost every \(x \in \Omega\)},
\end{equation}
where $G_x$ denotes the duality solution of 
\[{}
	\left\{
	\begin{alignedat}{2}
	-\Delta G_{x}+ V^{+} G_{x} & = \delta_{x} && \quad \text{in } \Omega,\\
	 G_{x} & = 0 && \quad  \text{on }  \partial \Omega.
	\end{alignedat}
	\right.
\]
Formula \eqref{eqRepresentationFormula} is proved in \cite{OP}*{Lemma~13.2} when \(\nu\) is nonnegative.
For a general signed measure \(\nu\), one first applies \eqref{eqRepresentationFormula} to the duality solutions of \eqref{eqDualitySolutionV+} with nonnegative data \(\nu^{+}\) and \(\nu^{-}\).{}
We recall that this step can be performed since duality solutions exist for any data in \(\mathcal{M}(\Omega)\).
It then follows from \eqref{eqRepresentationFormula} applied to \(\nu^{+}\) and \(\nu^{-}\) that
\[{}
\int_{\Omega} \widehat{G_x} \dif|\nu|
< +\infty
\quad{}
\text{for almost every \(x \in \Omega\)}
\]
and, taking differences, we deduce \eqref{eqRepresentationFormula} for \(\nu = \nu^{+} - \nu^{-}\).

\begin{proof}[Proof of Proposition~\ref{propositionSZeroae}]
Since $G_{x}$ is a duality solution, using $\zeta_{1}$ as test function we have
\[
\int_{\Omega} G_{x} = \int_{\Omega} \widehat{\zeta_{1}} \dif \delta_{x} = \widehat{\zeta_{1}}(x)\,,
\]
for every $x \in \Omega$. Then, by nonnegativity of $G_{x}$ we deduce that 
\begin{equation}
	\label{eq826}
	\widehat{G_{x}} = 0
	\quad \text{for every \(x \in S\).}
\end{equation}
From \eqref{eqRepresentationFormula}, we deduce that $u(x) = 0$ for almost every $x \in S$. 
\end{proof}

We now prove a sharper version of Proposition~\ref{propositionSZeroae}, where the Lebesgue measure is replaced by the \(W^{1, 2}\) capacity.
More precisely, we recall that every duality solution \(u\) has a precise representative \(\widehat{u}\) which is defined quasi-everywhere, that is except in a set of \(W^{1, 2}\) capacity zero. This is a consequence of the fact that \(\Delta u\) is a finite measure in \(\Omega\); see \cite{OP}*{Proposition~4.1} and \cite{bookponce}*{Proposition~8.9}.
We then have the following statement whose proof follows along the lines of \cite{OP}*{Proposition~13.3}:

\begin{proposition}
	\label{propositionSZero}
	If \(u\) is a duality solution of \eqref{eqDualitySolutionV+} with datum \(\nu \in \mathcal{M}(\Omega)\), then
	\(\widehat{u} = 0\) quasi-everywhere in \(S\).
\end{proposition}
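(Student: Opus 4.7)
The plan is to follow the strategy of \cite{OP}*{Proposition~13.3}, reducing to bounded data (where the sharper pointwise statement \eqref{eq789} is available) and then passing to the limit at the level of precise representatives.

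First, I would reduce to the case $\nu \ge 0$. Writing $\nu = \nu^+ - \nu^-$ and using the linearity of the duality identity \eqref{eq753}, one obtains $u = u^+ - u^-$, where $u^\pm$ are the duality solutions associated with $\nu^\pm$. Both admit precise representatives defined quasi-everywhere, because $\Delta u^\pm = V^+ u^\pm - \nu^\pm$ is a finite measure (cf.\ \cite{OP}*{Proposition~4.1} and \cite{bookponce}*{Proposition~8.9}), and $\widehat u = \widehat{u^+} - \widehat{u^-}$ quasi-everywhere. It therefore suffices to prove the statement under the assumption $\nu \ge 0$.

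Assuming now $\nu \ge 0$, I would approximate $\nu$ monotonically from below by bounded data, choosing $g_n \in L^\infty(\Omega)$ with $g_n \ge 0$ and $\nu_n \vcentcolon= g_n \, \mathrm{d}x$ increasing weakly to $\nu$. The corresponding solutions $u_n \vcentcolon= \zeta_{g_n}$ belong to $W_0^{1,2}(\Omega) \cap L^2(\Omega; V^+ \, \mathrm{d}x)$ and, by \eqref{eq789}, satisfy $\widehat{u_n} = 0$ pointwise on $S$. Monotonicity of the duality map, which follows from \eqref{eq753} applied to nonnegative $f$, gives $u_n \nearrow u$ almost everywhere in $\Omega$.

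The remaining task is to upgrade this vanishing to a quasi-everywhere statement for $\widehat u$. I would use the representation formula to write, for quasi-every $x \in \Omega$,
\[
\widehat{u_n}(x) = \int_\Omega \widehat{G_x}\, g_n  \quad \text{and} \quad \widehat{u}(x) = \int_\Omega \widehat{G_x}\,\mathrm{d}\nu,
\]
where the first identity comes directly from the bounded-data case. The main obstacle is to justify the second identity \emph{quasi-everywhere}, rather than only almost everywhere as stated in \eqref{eqRepresentationFormula}. This is the capacitary refinement of the representation formula; I would obtain it by combining the a.e.\ identity with the quasi-continuity of $\widehat u$ (available because $\Delta u$ is a finite measure), the symmetry of the Green kernel $\widehat{G_x}(y) = \widehat{G_y}(x)$ stemming from \eqref{eq675}, and a Lebesgue-differentiation-type argument valid up to sets of $W^{1,2}$-capacity zero, using the pointwise control $|\widehat{G_x}| \le \widehat{\zeta_1}(x)$. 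Once the q.e.\ representation is in place, \eqref{eq826} gives $\widehat{G_x} \equiv 0$ for every $x \in S$, so the integral is zero at quasi-every $x \in S$, yielding $\widehat{u} = 0$ quasi-everywhere on $S$.
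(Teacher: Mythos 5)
Your overall target is the right one---reduce to data where \eqref{eq789} applies and exploit that \(\widehat{G_{x}} \equiv 0\) for \(x \in S\)---but there are two genuine gaps. First, the monotone approximation step fails: a nonnegative finite measure with a nontrivial singular part (e.g.\ \(\nu = \delta_{a}\)) admits no nontrivial \(g_{n} \in L^{\infty}(\Omega)\) with \(g_{n}\, \mathrm{d}x \le \nu\), so you cannot choose bounded densities increasing to \(\nu\) from below, and the claim \(u_{n} = \zeta_{g_{n}} \nearrow u\) collapses. Second, and more seriously, the whole difficulty of the proposition is concentrated in the step you leave as a sketch: upgrading the representation formula \eqref{eqRepresentationFormula} from almost every \(x\) to quasi-every \(x\). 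Knowing that \(\widehat{u}\) is quasi-continuous and agrees almost everywhere with \(x \mapsto \int_{\Omega} \widehat{G_{x}}\, \mathrm{d}\nu\) yields the quasi-everywhere identity only if that second function is itself quasi-continuous, or if one runs a differentiation argument at the level of capacity; carrying this out requires the symmetry of the kernel, the quasi-everywhere finiteness of the Newtonian potential of \(\nu\), and a domination allowing the interchange of \(\lim_{r \to 0} \fint_{B_{r}(x)}\) with \(\int \mathrm{d}\nu\)---none of which is supplied. Note also that \(|\widehat{G_{x}}| \le \widehat{\zeta_{1}}(x)\) is not a pointwise control: the identity \(\int_{\Omega} G_{x} = \widehat{\zeta_{1}}(x)\) is an \(L^{1}\) bound, and \(G_{x}\) is unbounded near \(x\).

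The paper sidesteps both issues by upgrading almost-everywhere to quasi-everywhere through \emph{strong convergence in \(W_{0}^{1,2}(\Omega)\)}, which gives quasi-everywhere convergence of precise representatives along a subsequence. Concretely: when \(F * |\nu|\) is bounded, the datum is mollified, so \(u_{n} = \zeta_{\rho_{n} * \nu}\) satisfies \(\widehat{u_{n}} = 0\) on \(S\) by \eqref{eq789}, and subtracting Euler--Lagrange equations shows \(u_{n} \to u\) in \(W_{0}^{1,2}(\Omega)\); for a general \(\nu\) one passes to \(T_{1}(u)\), whose associated datum is a diffuse measure that can be approximated strongly in \(\mathcal{M}(\Omega)\) by measures with bounded Newtonian potential, and an energy estimate on \(T_{k}(v_{n} - T_{1}(u))\) closes the argument. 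To salvage your route you would have to either prove the quasi-everywhere representation formula in full detail or replace your approximation scheme by one with strong \(W^{1,2}\) control.
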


Let us denote by \(F\) the fundamental solution of \(-\Delta\) and focus our discussion on dimension \(N \ge 3\).{}
The starting observation is that if \(\nu \in \mathcal{M}(\Omega)\) is extended as \(0\) outside \(\Omega\), then for any smooth bounded open set \(U \supset \Omega\) the distributional solution of 
\[{}
	\left\{
	\begin{alignedat}{2}
	-\Delta w & = \nu && \quad \text{in } U,\\
	 w & = 0 && \quad  \text{on }  \partial U,
	\end{alignedat}
	\right.
\]
satisfies \(|w| \le F * |\nu|\) almost everywhere in \(U\).{}
Hence, if \(F * |\nu|\) is bounded, then so is  \(w\) and, by interpolation, we have \(w \in W_{0}^{1, 2}(U)\).{}
As a result, \(\nu\) belongs to the dual space \((W_{0}^{1, 2}(U))'\) and is \emph{diffuse} with respect to the \(W^{1, 2}\) capacity, that is, for any Borel set \(A \subset \Omega\) with \(W^{1, 2}\)~capacity zero, one has \(\tau(A) = 0\).{}
.{}
The action of \(\nu\) as an element in \((W_{0}^{1, 2}(U))'\) is given by
	\begin{equation}
		\label{eq822}
	\nu[v]
	\vcentcolon= \int_{U} \nabla v \cdot \nabla w
	= \int_{\Omega}{\widehat{v} \dif\nu}
	 \quad \text{for every $v \in W_{0}^{1,2}(U)$.}
	\end{equation}
In dimension \(N = 2\), the strategy above needs to be slightly adapted since \(F(x) \to -\infty\) as \(|x| \to \infty\) and one cannot expect to have \(F * |\nu|\) bounded in \(\R^{2}\).

\begin{proof}[Proof of Proposition~\ref{propositionSZero}]
	We first assume that the Newtonian potential \(F * |\nu|\) is bounded.
	In this case, by comparison with \(F  * |\nu|\) we have \(u \in L^{\infty}(\Omega)\) and then
	\(u \in W_{0}^{1, 2}(\Omega) \cap L^{2}(\Omega; V^{+} \dif x)\).{}
	Moreover, \(u\) satisfies the Euler-Lagrange equation
	\[{}
	\int_{\Omega} (\nabla u \cdot \nabla\xi + V^{+} u\xi){}
	= \int_{\Omega} \widehat{\xi} \dif\nu{}
	\quad \text{for every \(\xi \in W_{0}^{1, 2}(\Omega) \cap L^{\infty}(\Omega)\).}
	\]	
	Given a sequence of radial mollifiers \((\rho_{n})_{n \in \N}\) in \(C_{c}^{\infty}(B_{1})\), let \(u_{n}\) be the duality solution of \eqref{eqDualitySolutionV+} with datum \(\rho_{n} * \nu\), so that by uniqueness \(u_{n} = \zeta_{\rho_{n} * \nu}\)\,.{}
	By \eqref{eq789}, for every \(n \in \N\) we then have 
	\begin{equation}
		\label{eq895}
		\widehat{u_{n}} = 0
		\quad \text{in \(S\).}
	\end{equation}
	
	We claim that 
	\begin{equation}
		\label{eq901}
		u_{n} \to u
		\quad \text{in \(W_{0}^{1, 2}(\Omega)\).}
	\end{equation}
	To this end, we subtract the Euler-Lagrange equations satisfied by \(u_{n}\) and \(u\), and apply \(u_{n} - u\) as test function. 
	Using the nonnegativity of \(V^{+}\), we get
	\begin{equation}
	\label{eq3211}
	\int_{\Omega}{\abs{\nabla(u_{n} - u)}^{2}}
	\le \int_{\Omega}{(u_{n} - u) \rho_{n} * \nu} - \int_{\Omega}{\widehat{u_{n} - u} \dif\nu}.
	\end{equation}
	Since the sequence \((u_{n})_{n \in \N}\) is bounded in \(W_{0}^{1, 2}(\Omega)\) and converges to \(u\) in \(L^{1}(\Omega)\), we have weak convergence in \(W_{0}^{1, 2}(\Omega)\).{} 
	From \eqref{eq822} with \(U = \Omega\), we then get
	\[{}
	\lim_{n \to \infty}{\int_{\Omega}{\widehat{u_{n} - u} \dif\nu}}
	= 0.
	\]
	For the first integral in the right-hand side of \eqref{eq3211}, we apply Fubini's theorem,
	\begin{equation}
		\label{eq909}
	\int_{\Omega}{(u_{n} - u) \rho_{n} * \nu}
	= \int_{\Omega}{{\rho_{n}} * (u_{n} - u)  \dif\nu}.
	\end{equation}
	Extending \(u_{n} - u\) by \(0\) in \(\R^{N} \setminus \Omega\)\,, we have that \(u_{n} - u \in W^{1, 2}(\R^{N})\).
	The sequence \(({\rho_{n}} * (u_{n} - u))_{n \in \N}\)  is supported in a smooth bounded open set \(U \Supset \Omega\) and converges weakly to zero in \(W^{1, 2}_{0}(U)\).{}
	From \eqref{eq822} and \eqref{eq909}, we get
	\[{}
	\lim_{n \to \infty}{\int_{\Omega}{(u_{n} - u) \rho_{n} * \nu}}
	= 0.
	\]
	Thus, as \(n \to \infty\) in \eqref{eq3211},
	\[{}
	\lim_{n \to \infty}{\int_{\Omega}{\abs{\nabla(u_{n} - u)}^{2}}} 
	= 0,
	\]
	which implies \eqref{eq901}.
	Now passing to a subsequence \((u_{n_{j}})_{j \in \N}\)\,, one deduces that 
	\[{}
	\widehat{u_{n_{j}}} \to \widehat{u}
	\quad \text{quasi-everywhere in \(\Omega\).}
	\]
	In view of \eqref{eq895}, the conclusion holds when \(F * |\nu|\) is bounded.
	
	In the case of a general measure \(\nu \in \mathcal{M}(\Omega)\), it suffices to prove that the truncated function \(T_{1}(u)\) satisfies the conclusion.
	Observe that \(T_{1}(u) \in W_{0}^{1, 2}(\Omega) \cap L^{2}(\Omega; V^{+} \dif x)\) and
	\[{}
	- \Delta T_{1}(u) + V^{+} T_{1}(u)
	 = \widetilde{\nu}
	 \quad \text{in the sense of distributions in \(\Omega\),}
	\]
	for some diffuse measure \(\widetilde\nu \in \mathcal{M}(\Omega)\)\,; see \cites{DMOP,Brezis_Ponce:2004,Brezis_Ponce:2008}.{}
	By a classical property in Potential Theory~\cite{Helms:2009}*{Theorem~3.6.3} applied to \(\widetilde\nu^{+}\) and \(\widetilde\nu^{-}\), this measure \(\widetilde\nu\) can be strongly approximated in \(\mathcal{M}(\Omega)\) by a sequence of measures \((\nu_{j})_{j \in \N}\) such that \(|\nu_{j}|\) has bounded Newtonian potential, for which the conclusion holds from the first part of the proof.
	
	Denote by \(v_{n}\) the duality solution of \eqref{eqDualitySolutionV+} with datum \(\nu_{n}\).{}
	By uniqueness of duality solutions, \(v_{n}\) and \(T_{1}(u)\) are also minimizers of their associated energy functionals and therefore satisfy an Euler-Lagrange equation.
	Subtracting those equations, we then have
	\[{}
	\int_{\Omega} \nabla(v_{n} - T_{1}(u)) \cdot \nabla\xi + V^{+}(v_{n} - T_{1}(u))\xi{}
	= \int_{\Omega} \widehat\xi \dif(\nu_{n} - \widetilde\nu),
	\]
	for every \(\xi \in W_{0}^{1, 2}(\Omega) \cap L^{2}(\Omega; V^{+} \dif x)\).
	We now fix \(k > 0\).{}
	Applying the identity above with test function \(\xi = T_{k}(v_{n} - T_{1}(u))\), by nonnegativity of \(V^{+}\) we get
	\[{}
	\int_{\Omega} \bigl|\nabla\bigl(T_{k}(v_{n} - T_{1}(u))\bigr)\bigr|^{2} 
	\le \int_{\Omega} \widehat{T_{k}(v_{n} - T_{1}(u))} \dif(\nu_{n} - \widetilde\nu)
	\le k \int_{\Omega} \dif|\nu_{n} - \widetilde\nu|.
	\]
	Thus, as \(n \to \infty\),
	\[{}
	T_{k}(v_{n} - T_{1}(u)) \to 0
	\quad \text{in \(W_{0}^{1, 2}(\Omega)\).}
	\]
	Since \(\widehat{v_{n}} = 0\) quasi-everywhere in \(S\), we conclude that 
	\[{}
	T_{1}(\widehat{u})
	= \widehat{T_{1}(u)}
	= 0 
	\quad \text{quasi-everywhere in \(S\).}
	\qedhere
	\]
\end{proof}


\section{Localization on $D_{i}$}
\label{sectionCutoff}

In this section, we prove that duality solutions of \eqref{eqDualitySolutionV+} and functions in $W^{1,2}_{0}(\Omega) \cap L^{2}(\Omega; V^{+} \dif x)$ can be localized on each component $D_{i}$\,.

\begin{proposition}\label{cutoffV+}
Let \(i \in I\).{}
If $u$ is a duality solution of \eqref{eqDualitySolutionV+} with datum \(\nu \in \mathcal{M}(\Omega)\), then
 $u\chi_{D_i}$ is a duality solution of \eqref{eqDualitySolutionV+} with datum $\nu\lfloor_{D_i}{}$.
\end{proposition}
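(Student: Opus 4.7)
I would verify the duality relation for $u\chi_{D_i}$ with candidate datum $\nu\lfloor_{D_i}$ by testing against an arbitrary $f \in L^\infty(\Omega)$. Since $f\chi_{D_i} \in L^\infty(\Omega)$, applying the duality relation satisfied by $u$ with test function $f\chi_{D_i}$ yields
\[
\int_\Omega (u\chi_{D_i})\,f \;=\; \int_\Omega u\,(f\chi_{D_i}) \;=\; \int_\Omega \widehat{\zeta_{f\chi_{D_i}}}\,\dif\nu,
\]
so the entire problem reduces to establishing the pointwise identity
\[
\widehat{\zeta_{f\chi_{D_i}}}(x) = \chi_{D_i}(x)\,\widehat{\zeta_f}(x) \quad \text{for every } x \in \Omega,
\]
which upon integration against $\nu$ becomes $\int_\Omega \widehat{\zeta_{f\chi_{D_i}}}\,\dif\nu = \int_{D_i} \widehat{\zeta_f}\,\dif\nu$, exactly the required duality identity. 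Both precise representatives are well defined everywhere in $\Omega$ since $|\zeta_g|\le \|g\|_{L^\infty(\Omega)}\,\zeta_1$ lets every $\zeta_g$ with $g \in L^\infty(\Omega)$ inherit the continuous-minus-bounded-subharmonic structure of $\zeta_1$ recalled in the Introduction.

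The first step is the almost-everywhere identity $\zeta_{f\chi_{D_i}} = \zeta_f\,\chi_{D_i}$ in $\Omega$. This would follow from an orthogonal decomposition of the energy functional
\[
\xi \longmapsto \tfrac{1}{2}\int_\Omega (|\nabla\xi|^2 + V^+\xi^2) - \int_\Omega f\chi_{D_i}\,\xi
\]
into independent minimizations on each Sobolev-connected component $D_j$, using Proposition~\ref{propositionSobolevCutoff} to write $\xi = \sum_j \xi\chi_{D_j}$ with each piece again in $W^{1,2}_0(\Omega) \cap L^2(\Omega; V^+\dif x)$. The source concentrates on $D_i$, so the unique minimizer has vanishing $D_j$-piece for $j \ne i$, while its $D_i$-piece solves the same variational problem as the $D_i$-piece of $\zeta_f$ obtained by the analogous decomposition applied to the energy functional of $\zeta_f$. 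Uniqueness of the minimizer then closes this step.

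The second step promotes the almost-everywhere identity to a pointwise identity for the precise representatives, by cases. On $S$ both sides vanish by \eqref{eq789} applied to $g = f$ and $g = f\chi_{D_i}$. On $D_j$ with $j \ne i$, $\zeta_{f\chi_{D_i}}$ is a Sobolev function vanishing almost everywhere on $D_j$ that satisfies $-\Delta w + V^+ w = 0$ in distribution there; the nonnegativity of $V^+$ renders $w$ subharmonic on $D_j$ and the strong maximum principle on the Sobolev-connected component forces $\widehat{\zeta_{f\chi_{D_i}}}(x) = 0$. On $D_i$, the same subharmonic argument applied to the difference $\zeta_{f\chi_{D_i}}-\zeta_f$, which solves the homogeneous equation and vanishes almost everywhere on $D_i$ by the previous step, gives $\widehat{\zeta_{f\chi_{D_i}}}(x) = \widehat{\zeta_f}(x)$. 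Integrating the resulting pointwise identity against the finite Borel measure $\nu$ concludes the proof.

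The delicate point I anticipate is the pointwise upgrade in the second step: since each $D_j$ is only Sobolev-open, and not necessarily Euclidean-open, an almost-everywhere vanishing cannot be transferred to a pointwise vanishing of the precise representative by naive density arguments, and one must invoke both the subharmonicity of the relevant Schr\"odinger solution and the strong maximum principle tailored to Sobolev-connected components.
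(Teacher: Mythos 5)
Your argument is correct in outline but takes a genuinely different route from the paper's. The paper compares the two candidate duality solutions directly through the representation formula \(u(x)=\int_\Omega \widehat{G_x}\,\dif\nu\) and the orthogonality of the Green functions on the components (\(\widehat{G_x}=0\) on \(D_i\) for \(x\notin D_i\), and \(\widehat{G_x}=0\) on \(\Omega\setminus D_i\) for \(x\in D_i\)), which immediately identifies the duality solution with datum \(\nu\lfloor_{D_i}\) as \(u\chi_{D_i}\) almost everywhere. You instead test the duality relation with \(f\chi_{D_i}\) and reduce everything to the everywhere identity \(\widehat{\zeta_{f\chi_{D_i}}}=\chi_{D_i}\widehat{\zeta_f}\); your component-wise energy decomposition for the almost-everywhere version is a nice, self-contained use of Proposition~\ref{propositionSobolevCutoff} (it also needs \(\nabla(\xi\chi_{D_j})=(\nabla\xi)\chi_{D_j}\), as in \eqref{eq1049}, and the observation that replacing a single component piece of a competitor by another admissible piece stays in the admissible class). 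What each approach buys: the paper's is shorter because the Green-function orthogonality already encodes the localization; yours makes visible the variational mechanism behind it and only quotes the maximum principle rather than the representation formula.

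The one step where your justification is too loose is the pointwise upgrade. ``Subharmonicity of \(w\) on \(D_j\)'' is not meaningful since \(D_j\) is only Sobolev-open, and \(\zeta_{f\chi_{D_i}}\) is not globally subharmonic; moreover a ball \(B_r(x)\) around \(x\in D_j\) may charge \(D_i\), where the function is nonzero, so vanishing a.e.\ on \(D_j\) alone says nothing about \(\widehat{\zeta_{f\chi_{D_i}}}(x)\). The tool that actually closes this step is the dichotomy form of the strong maximum principle recalled before Corollary~\ref{MaximumPrinciple} (for bounded \(g\ge 0\), either \(\widehat{\zeta_g}>0\) on \(D_j\) or \(\widehat{\zeta_g}\equiv 0\) on \(D_j\)), applied after splitting \(f=f^+-f^-\) so the data are nonnegative: since \(\zeta_{f^\pm\chi_{D_i}}\) vanishes a.e.\ on the positive-measure set \(D_j\), the dichotomy forces \(\widehat{\zeta_{f^\pm\chi_{D_i}}}\equiv 0\) there, and the same applies to \(\zeta_{f^\pm\chi_{\Omega\setminus D_i}}\) on \(D_i\) for the difference. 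With that substitution your proof is complete; note also that your target identity follows at once from the paper's Green-function orthogonality via \(\widehat{\zeta_g}(x)=\int_\Omega G_x\,g\), so the two proofs ultimately rest on equivalent structural facts about the \(D_j\) from \cite{OP}.
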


We rely on an orthogonality property of the Green's functions \(G_{x}\) that is implicitly used in \cite{OP} to identify the components \(D_{i}\)\,.{}
It relies on \eqref{eq826} above and \cite{OP}*{Proposition~9.1}, and is valid for each \(i \in I\):{}
\begin{enumerate}[\((i)\)]
	\item 
	\label{item-1012}
	For every \(x \in \Omega \setminus D_{i}\)\,, we have \(\widehat{G_{x}} = 0\) in \(D_{i}\)\,;{}
	\item 
	\label{item-1015}
	For every \(x \in D_{i}\)\,, we have \(\widehat{G_{x}} = 0\) in \(\Omega \setminus D_{i}\)\,.
\end{enumerate}

\begin{proof}[Proof of Proposition~\ref{cutoffV+}]
By the representation formula \eqref{eqRepresentationFormula}, the duality solution \(v\) of
\begin{equation}
	\label{eq897}
	\left\{
	\begin{alignedat}{2}
	-\Delta v + V^{+} v & = \nu\lfloor_{D_{i}} && \quad \text{in } \Omega,\\
	 v & = 0 && \quad  \text{on }  \partial \Omega,
	\end{alignedat}
	\right.
\end{equation}
satisfies
\begin{equation}
	\label{eq907}
v(x)
= \int_{\Omega}\widehat{G_x} \dif\nu\lfloor_{D_{i}}{}
= \int_{D_{i}}\widehat{G_x} \dif\nu
\quad
\text{for almost every \(x \in \Omega\)}.
\end{equation}
By \((\ref{item-1012})\), 
\begin{equation}
	\label{eq917}
\int_{D_{i}}\widehat{G_x} \dif\nu = 0
\quad \text{for every \(x \in \Omega \setminus D_{i}\)}
\end{equation}
and, by \((\ref{item-1015})\),
\begin{equation}
	\label{eq921}
\int_{D_{i}}\widehat{G_x} \dif\nu{}
= \int_{\Omega}\widehat{G_x} \dif\nu
\quad \text{for every \(x \in D_{i}\)\,.}
\end{equation}
We conclude from \eqref{eqRepresentationFormula} and \eqref{eq907}, \eqref{eq917} and \eqref{eq921} that \(v = u \chi_{D_{i}}\) almost everywhere in \(\Omega\).{}
Hence, \(u\chi_{D_{i}}\) is the duality solution of \eqref{eq897}.
\end{proof}

As a consequence of Proposition~\ref{cutoffV+}, we deduce a localized strong maximum principle for duality solutions of \eqref{eqDualitySolutionV+}.
We first recall that the strong maximum principle holds for $\zeta_{g}$ with nonnegative $g \in L^{\infty}(\Omega)$ in the sense that, for each $i \in I$, we have that either $\widehat{\zeta_{g}} > 0$ on $D_{i}$ or $\widehat{\zeta_{g}} \equiv 0$ on $D_{i}$\,; see \cite{OP}*{Theorem~12.1}.
Next, every duality solution $u$ of \eqref{eqDualitySolutionV+} with nonnegative data $\nu$ can be bounded from below by some duality solution with nonnegative datum in \(L^{\infty}(\Omega)\). 
More precisely, by \cite{OP}*{Proposition~7.1}, there exists a bounded nondecreasing continuous function \(Q : \R_{+} \to \R_{+}\) with \(Q(t) > 0\) for \(t > 0\) such that 
\begin{equation}
	\label{eq1399}
u \geq \zeta_{Q(u)}
\quad \text{almost everywhere in \(\Omega\).}
\end{equation}
A valid choice of function \(Q\) satisfying \eqref{eq1399} is $Q(t) = \frac{\alpha-1}{C\alpha}\min{\{t^\alpha,1\}}$, where $\alpha>1$ and $C > 0$ is a constant that depends on \(\alpha\) and \(\Omega\).

\begin{corollary}\label{MaximumPrinciple}
Let \(i \in I\) and let $u$ be a nonnegative duality solution of \eqref{eqDualitySolutionV+} with datum \(\nu \in \mathcal{M}(\Omega)\). 
If $\nu \geq 0$ in $D_{i}$ and \(\int_{D_{i}} u > 0\), then
\[{}
\liminf_{r \to 0}{\fint_{B_{r}(x)} u} > 0
\quad \text{for every \(x \in D_{i}\)\,.}
\]
\end{corollary}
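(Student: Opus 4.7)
My strategy is to bound $u$ below pointwise by a duality solution $\zeta_g$ to which the strong maximum principle of \cite{OP}*{Theorem~12.1} applies on $D_i$\,. By Proposition~\ref{cutoffV+}, $u\chi_{D_i}$ is itself a nonnegative duality solution of \eqref{eqDualitySolutionV+} with datum $\nu\lfloor_{D_i}$\,, since $\nu \geq 0$ on $D_i$\,. Applying \eqref{eq1399} to this localized solution then yields
\[
u\chi_{D_i} \geq \zeta_{g} \quad \text{almost everywhere in \(\Omega\)}, \qquad g \vcentcolon= Q(u\chi_{D_i}) \in L^{\infty}(\Omega).
\]
Since $Q(0)=0$, the function $g$ vanishes outside $D_i$\,; and since $Q(t)>0$ for $t>0$ together with $\int_{D_i}u>0$, the set $\{g>0\}\cap D_i$ has positive Lebesgue measure.

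Next, I would rule out the trivial branch of the dichotomy in \cite{OP}*{Theorem~12.1} for $\zeta_g$\,. A second application of Proposition~\ref{cutoffV+}, now to $\zeta_g$ with datum $g\dif x$ supported in $D_i$\,, gives $\zeta_g=0$ almost everywhere in $\Omega\setminus D_i$\,. Plugging $f \equiv 1$ into the duality identity \eqref{eq753} then yields
\[
\int_\Omega \zeta_g \;=\; \int_\Omega \widehat{\zeta_1}\, g \dif x \;=\; \int_{D_i} \widehat{\zeta_1}\, g \dif x \;>\; 0,
\]
because $\widehat{\zeta_1}>0$ on $D_i \subset \Omega\setminus S$ by the very definition \eqref{eqDefintionS} of $S$. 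Hence $\zeta_g$ is not almost everywhere zero on $D_i$\,, and the dichotomy forces $\widehat{\zeta_g}>0$ everywhere on $D_i$\,.

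To conclude, for every $x\in D_i$ and every sufficiently small $r>0$,
\[
\fint_{B_r(x)} u \;\geq\; \fint_{B_r(x)} u\chi_{D_i} \;\geq\; \fint_{B_r(x)} \zeta_g,
\]
and the right-hand side converges to $\widehat{\zeta_g}(x)>0$ as $r\to 0$ by the definition of the precise representative. The delicate point is the middle step: \cite{OP}*{Theorem~12.1} only provides a dichotomy, and the positive branch must be extracted by combining the duality identity with the localization of $\zeta_g$ on $D_i$ and the characterization $D_i\subset\{\widehat{\zeta_1}>0\}$.
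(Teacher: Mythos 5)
Your proof is correct and follows essentially the same route as the paper: localize $u$ to $u\chi_{D_i}$ via Proposition~\ref{cutoffV+}, bound it below by $\zeta_{Q(u\chi_{D_i})}$ using \eqref{eq1399}, invoke the strong maximum principle for $-\Delta+V^{+}$ on $D_i$\,, and pass to precise representatives. The only difference is that you spell out explicitly, via the duality identity with $f\equiv 1$ and the localization of $\zeta_g$\,, why the trivial branch of the dichotomy in \cite{OP}*{Theorem~12.1} is excluded — a step the paper leaves implicit in the phrase ``$Q(\bar u_i)$ is nonzero on $D_i$''.
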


\begin{proof}
Let $\bar{u}_{i} = u \chi_{D_{i}}$\,. 
Then, $\bar{u}_{i}$ is a duality solution of \eqref{eqDualitySolutionV+} with nonnegative datum $\nu\lfloor_{D_{i}}$\,. By the comparison principle \eqref{eq1399}, 
\[
u \ge \bar{u}_{i} \ge \zeta_{Q(\bar{u}_{i})} \quad \text{almost everywhere in $\Omega$.}
\]
Since \(\int_{D_{i}} u > 0\), the function $Q(\bar{u}_{i})$ is nonzero on $D_{i}$\,. 
Then, by the strong maximum principle, we have $\widehat{\zeta_{Q(\bar{u}_{i})}} > 0$ on $D_{i}$\,. 
Thus, for every $x \in D_{i}$\,,
\[
\liminf_{r \to 0}{\fint_{B_{r}(x)} u}
\ge
\lim_{r \to 0}{\fint_{B_{r}(x)} \zeta_{Q(\bar{u}_{i})}}
=
\widehat{\zeta_{Q(\bar{u}_{i})}}(x)
>
0. \qedhere
\]
\end{proof}

We now prove that decomposition~\eqref{eqDecomposition} of \(\Omega \setminus S\) in terms of its components \(D_{i}\) induces a natural splitting \eqref{eqDecompositionFunction} of functions in \(W_{0}^{1, 2}(\Omega) \cap L^{2}(\Omega; V^{+} \dif x)\).{}

\begin{proposition}
	\label{propositionSobolevCutoff}
	If \(\xi \in W_{0}^{1, 2}(\Omega) \cap L^{2}(\Omega; V^{+} \dif x)\), then, for every \(i \in I\), we have \(\xi\chi_{D_{i}} \in W_{0}^{1, 2}(\Omega)\) and
\[{}
\xi = \sum_{i \in I}{\xi \chi_{D_{i}}}
\quad \text{almost everywhere in \(\Omega\).}
\]	
\end{proposition}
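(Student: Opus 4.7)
Setting $H \vcentcolon= W_0^{1,2}(\Omega) \cap L^2(\Omega; V^+ \dif x)$ equipped with the inner product $a(\eta,\xi) \vcentcolon= \int_\Omega(\nabla\eta\cdot\nabla\xi + V^+\eta\xi)$, my plan is to reduce both claims of the proposition---the identity $\xi = \sum_{i \in I} \xi\chi_{D_i}$ almost everywhere (equivalently, $\xi = 0$ a.e.\ on $S$, given $\Omega = S \cup \bigcup_i D_i$) and the membership $\xi\chi_{D_i} \in W_0^{1,2}(\Omega)$---to a suitable approximation of $\xi$ by duality solutions $\zeta_f$ with $f \in L^\infty(\Omega)$, for which the analogous properties are already available from Section~\ref{sectionDualityPositive}.

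I would first prove the density of $\{\zeta_f : f \in L^\infty(\Omega)\}$ in the Hilbert space $H$. Each $\zeta_f$ satisfies the Euler-Lagrange identity $a(\zeta_f,\eta) = \int_\Omega f\eta$ for every $\eta \in H$, so any $\eta \in H$ that is $a$-orthogonal to the whole family must verify $\int_\Omega f\eta = 0$ for every $f \in L^\infty(\Omega)$; since $H \hookrightarrow L^1(\Omega)$ via Poincaré, this forces $\eta = 0$ almost everywhere and hence $\eta = 0$ in $H$.

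Given $\xi \in H$, density supplies $f_n \in L^\infty(\Omega)$ with $\zeta_{f_n} \to \xi$ in $H$, in particular in $W_0^{1,2}(\Omega)$ and in $L^2(\Omega)$. Extracting an a.e.\ convergent subsequence and recalling that $\zeta_{f_n} = \widehat{\zeta_{f_n}} = 0$ almost everywhere on $S$ (from \eqref{eq789} combined with Lebesgue's differentiation theorem), one obtains $\xi = 0$ a.e.\ on $S$; the decomposition $\xi = \sum_{i \in I} \xi\chi_{D_i}$ a.e.\ follows at once. For the membership property, Proposition~\ref{cutoffV+} applied to the duality solution $\zeta_{f_n}$ (with datum $f_n\dif x$) identifies $\zeta_{f_n}\chi_{D_i}$ as $\zeta_{f_n\chi_{D_i}} \in H \subset W_0^{1,2}(\Omega)$. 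As $\zeta_{f_n}\chi_{D_i}$ vanishes almost everywhere on $\Omega\setminus D_i$, Stampacchia's identity for the gradient on the zero set of a Sobolev function yields $\nabla(\zeta_{f_n}\chi_{D_i}) = \chi_{D_i}\nabla\zeta_{f_n}$ almost everywhere, and therefore
\[
\|\zeta_{f_n}\chi_{D_i} - \zeta_{f_m}\chi_{D_i}\|_{W_0^{1,2}(\Omega)} \leq \|\zeta_{f_n} - \zeta_{f_m}\|_{W_0^{1,2}(\Omega)}.
\]
Hence $(\zeta_{f_n}\chi_{D_i})_n$ converges in $W_0^{1,2}(\Omega)$ to a limit which, by a.e.\ convergence along a further subsequence, must coincide with $\xi\chi_{D_i}$.

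The main obstacle is the density assertion, which genuinely relies on the Hilbert-space structure of $H$: once one recognizes $f \mapsto \zeta_f$ as the Riesz isomorphism restricted to $L^\infty(\Omega) \subset H^*$, the remaining steps are routine approximation combined with Proposition~\ref{cutoffV+} and Stampacchia's zero-set identity. A subtle but non-circular point is that working with the almost-everywhere statement $\widehat{\zeta_{f_n}} = 0$ on $S$ (together with Lebesgue differentiation) suffices for the proof, so the quasi-everywhere refinement of Proposition~\ref{propositionSZero} is not needed here.
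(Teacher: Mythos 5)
Your proposal is correct, but it follows a genuinely different route from the paper's. The paper treats the two claims separately: the vanishing of \(\xi\) on \(S\) is Lemma~\ref{decomposition}, proved by contradiction (if \(S \cap \{\xi \ne 0\}\) had positive measure, evaluating the functional with datum \(\chi_{S}\) at \(t\abs{\xi}\) for small \(t > 0\) would force \(\zeta_{\chi_{S}} \ne 0\), against the identity \(\zeta_{\chi_{S}} = 0\) coming from \eqref{eq675}); the membership \(\xi\chi_{D_{i}} \in W_{0}^{1, 2}(\Omega)\) is then obtained by a dual argument, testing against \(\Div{\Phi}\) for \(\Phi \in C^{\infty}(\overline{\Omega}; \R^{N})\), using Proposition~\ref{cutoffV+} and the gradient identity \eqref{eq1049} for the duality solution \(v\) with datum \(\Div{\Phi}\dif x\) to reach the bound \(\bigl|\int_{\Omega} \xi\chi_{D_{i}}\Div{\Phi}\bigr| \le \norm{\Phi}_{L^{2}(\Omega)}\norm{\xi}\), and concluding via the Riesz Representation Theorem. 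You instead prove one density lemma --- \(\{\zeta_{f} : f \in L^{\infty}(\Omega)\}\) is dense in the Hilbert space \(H\), which holds because this set is a linear subspace whose orthogonal complement for the form \(a\) is trivial by the Euler--Lagrange identity --- and derive both conclusions by passing to the limit along \(\zeta_{f_{n}} \to \xi\). The two arguments rest on the same ingredients (Proposition~\ref{cutoffV+}, the vanishing \eqref{eq789} of \(\zeta_{g}\) on \(S\), and the Stampacchia identity for gradients on zero sets), but yours is more unified at the cost of the extra density lemma, while the paper's dual argument works directly with the given \(\xi\) and needs no approximating sequence. Two points you should make explicit: the completeness of \(H\) under the norm induced by \(a\) (needed for the orthogonal-complement characterization of density; it follows by identifying the \(L^{2}(\Omega; V^{+}\dif x)\)-limit through an almost everywhere convergent subsequence), and the fact that duality solutions with \(L^{\infty}\) data are unique, so that \(\zeta_{f_{n}}\chi_{D_{i}}\) may legitimately be identified with the variational solution \(\zeta_{f_{n}\chi_{D_{i}}}\) and hence lies in \(W_{0}^{1, 2}(\Omega)\).
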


We begin with the following

\begin{lemma} \label{decomposition}
	For every \(\xi \in  W_{0}^{1, 2}(\Omega) \cap L^{2}(\Omega; V^{+} \dif x)\), we have \(\xi = 0\) almost everywhere in \(S\).
\end{lemma}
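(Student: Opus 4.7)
The plan is to apply Lemma~\ref{lemmaPoincareVariational} with the Borel potential taken to be $V^{+}$, with $u = \zeta_{1}$, and with the datum $f \equiv 1$. By definition, $\zeta_{1}$ is the unique minimizer of the torsion energy functional in $W^{1,2}_{0}(\Omega) \cap L^{2}(\Omega; V^{+} \dif x)$; since the datum $1$ is nonnegative and $V^{+} \ge 0$, a standard competitor argument comparing $\zeta_{1}$ with $|\zeta_{1}|$ gives $\zeta_{1} \ge 0$ almost everywhere in $\Omega$. Writing the Euler--Lagrange equation for $\zeta_{1}$ yields
\[
\int_{\Omega} (\nabla \zeta_{1} \cdot \nabla \xi + V^{+} \zeta_{1} \xi)
= \int_{\Omega} \xi
\quad \text{for every } \xi \in W^{1,2}_{0}(\Omega) \cap L^{2}(\Omega; V^{+} \dif x),
\]
so in particular the supersolution inequality required by Lemma~\ref{lemmaPoincareVariational} holds with equality for every nonnegative test function. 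As $\Omega$ is bounded, $f \equiv 1 \in L^{2}(\Omega)$, and all hypotheses of the lemma are met.

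Invoking Lemma~\ref{lemmaPoincareVariational} with this choice, the conclusion ``$f\xi^{2} = 0$ almost everywhere in $\{u = 0\}$'' reads simply as $\xi^{2} = 0$ almost everywhere in $\{\zeta_{1} = 0\}$, for every $\xi \in W^{1,2}_{0}(\Omega) \cap L^{2}(\Omega; V^{+} \dif x)$. Since $\zeta_{1}$ and its precise representative $\widehat{\zeta_{1}}$ coincide almost everywhere in $\Omega$, the symmetric difference of $\{\zeta_{1} = 0\}$ and $S = \{\widehat{\zeta_{1}} = 0\}$ has Lebesgue measure zero. Therefore $\xi = 0$ almost everywhere in $S$, as claimed.

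The only delicate point is making sure that Lemma~\ref{lemmaPoincareVariational} can indeed be invoked with $V$ replaced by $V^{+}$: but the lemma is stated and proved for an arbitrary Borel potential and any $L^{2}$ datum, so no adaptation is needed. Conceptually, the argument reveals that Lemma~\ref{decomposition} is a shadow of a Hardy-type weighted inequality with weight $1/\zeta_{1}$; the vanishing of $\xi$ on $S$ is forced by the finiteness requirement on the right-hand side, precisely where the weight blows up.
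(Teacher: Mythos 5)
Your proof is correct, but it follows a genuinely different route from the paper's. The paper argues by duality and contradiction: from the symmetry relation \eqref{eq675} one gets \(\int_{\Omega}\zeta_{\chi_{S}} = \int_{\Omega}\zeta_{1}\chi_{S} = 0\), hence \(\zeta_{\chi_{S}} = 0\) a.e.; if \(\xi\) did not vanish a.e.\ on \(S\), then \(t\abs{\xi}\) for small \(t>0\) would make the energy functional with datum \(\chi_{S}\) negative, contradicting the triviality of its minimizer. You instead feed \(u = \zeta_{1}\), \(f \equiv 1\), \(V \rightsquigarrow V^{+}\) into the ground-state substitution of Lemma~\ref{lemmaPoincareVariational}; all hypotheses are indeed met (the Euler--Lagrange identity for \(\zeta_{1}\) holds for every \(\xi\) in \(W_{0}^{1,2}(\Omega)\cap L^{2}(\Omega;V^{+}\dif x)\), \(\zeta_{1}\ge 0\) by the \(\abs{\zeta_{1}}\) competitor, and \(\{\zeta_{1}=0\}\) and \(S=\{\widehat{\zeta_{1}}=0\}\) agree up to a Lebesgue-null set), and the lemma's conclusion \(f\xi^{2}=0\) a.e.\ on \(\{u=0\}\) is exactly the claim. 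There is no circularity, since Lemma~\ref{lemmaPoincareVariational} is proved independently in Section~\ref{appendix}. Your route buys more than the statement asks: it delivers the quantitative Hardy-type inequality \(\int_{\{\zeta_{1}>0\}}(\abs{\nabla\xi}^{2}+V^{+}\xi^{2}) \ge \int_{\{\zeta_{1}>0\}}\xi^{2}/\zeta_{1}\), of which the vanishing on \(S\) is, as you say, a shadow. What the paper's argument buys instead is that it upgrades with almost no change to the capacitary refinement of Proposition~\ref{propDecompositionCapacity}, by replacing \(\chi_{S}\dif x\) with a diffuse measure carried by a compact subset of \(S\) of positive \(W^{1,2}\) capacity --- a step your Lebesgue-measure-based argument does not directly reach.
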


\begin{proof}[Proof of Lemma~\ref{decomposition}]
	We first observe that
	\begin{equation}
		\label{eq788}
		\zeta_{\chi_{S}} = 0
		\quad \text{almost everywhere in \(\Omega\).}
	\end{equation}
	Indeed, since \(\zeta_{1} = 0\) almost everywhere in \(S\), by \eqref{eq675} we have
	\[{}
	\int_{\Omega} \zeta_{\chi_{S}}
	= \int_{\Omega} \zeta_{1}\, \chi_{S}
	= 0.
	\]
	Since \(\zeta_{\chi_{S}}\) is nonnegative, \eqref{eq788} follows.

	Given \(\xi \in W_{0}^{1, 2}(\Omega) \cap L^{2}(\Omega; V^{+} \dif x)\), let us assume by contradiction that \(S \cap \{\xi \ne 0\}\) has positive Lebesgue measure. 
	Hence, \(\int_{\Omega} \abs{\xi} \, \chi_{S} > 0\).
	Computing the functional
	\[{}
	\eta \in  W_{0}^{1, 2}(\Omega) \cap L^{2}(\Omega; V^{+} \dif x)
	\longmapsto \frac{1}{2} \int_{\Omega} (|\nabla \eta|^{2} + V^{+}\eta^{2}) - \int_{\Omega} \eta \, \chi_{S}
	\]
	on \(t \abs{\xi}\) for \(t > 0\) small, one concludes that its infimum is negative. 
	Thus, the minimizer \(\zeta_{\chi_{S}}\) is nontrivial, in contradiction with \eqref{eq788}.
	Therefore, the set \(S \cap \{\xi \ne 0\}\) has Lebesgue measure zero.
\end{proof}

Lemma~\ref{decomposition} suffices for our purposes and is trivially satisfied when \(S\) is negligible for the Lebesgue measure.
A small adaptation of the proof yields a sharper version in terms of the \(W^{1, 2}\)~capacity:

\begin{proposition}
	\label{propDecompositionCapacity}
	For every \(\xi \in  W_{0}^{1, 2}(\Omega) \cap L^{2}(\Omega; V^{+} \dif x)\), we have \(\widehat{\xi} = 0\) quasi-everywhere in \(S\).
\end{proposition}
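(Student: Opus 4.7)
The plan is to argue by contradiction, adapting the strategy of Lemma~\ref{decomposition} to the capacity setting by replacing the absolutely continuous datum \(\chi_{S} \dif x\) with a diffuse measure concentrated on \(S\). Suppose that \(\xi \in W_{0}^{1,2}(\Omega)\cap L^{2}(\Omega; V^{+}\dif x)\) satisfies \(\capt(S \cap \{\widehat{\xi} \neq 0\}) > 0\). Since \(|\xi|\) lies in the same space and \(\widehat{|\xi|} = |\widehat{\xi}|\) quasi-everywhere, by inner regularity of capacity I may fix a compact set \(K \subset S\) of positive capacity on which \(\widehat{|\xi|} > 0\) quasi-everywhere, and consider its equilibrium measure \(\nu_{K}\)\,: a finite nonnegative measure concentrated in \(K\), diffuse with respect to the \(W^{1,2}\) capacity, and acting continuously on \(W_{0}^{1,2}(\Omega)\) through \(\eta \mapsto \int_{\Omega} \widehat{\eta} \dif\nu_{K}\).

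Next I would introduce the functional
\[
E(\eta) \vcentcolon= \tfrac{1}{2}\int_{\Omega} (|\nabla \eta|^{2} + V^{+}\eta^{2}) - \int_{\Omega} \widehat{\eta} \dif\nu_{K},
\qquad \eta \in W_{0}^{1,2}(\Omega) \cap L^{2}(\Omega; V^{+}\dif x),
\]
which admits a unique minimizer \(\zeta_{\nu_{K}}\) by the direct method. Because \(\widehat{|\xi|} > 0\) quasi-everywhere on \(K\) and \(\nu_{K}\) ignores sets of capacity zero, one has \(\int_{\Omega} \widehat{|\xi|} \dif\nu_{K} > 0\); testing \(E\) at \(t |\xi|\) for small \(t > 0\) thus yields \(\inf E < 0\).

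The heart of the argument is then to prove \(\zeta_{\nu_{K}} = 0\), which will contradict \(\inf E < 0\) since \(E(0) = 0\). Testing the Euler--Lagrange equation for \(\zeta_{\nu_{K}}\) against \(\zeta_{f}\) with arbitrary \(f \in L^{\infty}(\Omega)\) and using the symmetric Euler--Lagrange equation for \(\zeta_{f}\) (compare with \eqref{eq675}) produces the duality identity
\[
\int_{\Omega} \zeta_{\nu_{K}}\, f \dif x = \int_{\Omega} \widehat{\zeta_{f}} \dif\nu_{K},
\]
so that \(\zeta_{\nu_{K}}\) coincides with the duality solution of \eqref{eqDualitySolutionV+} with datum \(\nu_{K}\). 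Proposition~\ref{propositionSZero} applied to \(\zeta_{f}\) gives \(\widehat{\zeta_{f}} = 0\) quasi-everywhere in \(S\), and since \(\nu_{K}\) is diffuse and concentrated in \(S\), the right-hand side vanishes for every \(f \in L^{\infty}(\Omega)\). Hence \(\zeta_{\nu_{K}} = 0\) almost everywhere, which gives the desired contradiction.

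The main technical point that I expect to demand care is the duality identity above, more precisely the legitimacy of pairing a diffuse measure with precise representatives of admissible test functions. Once the linear form \(\eta \mapsto \int_{\Omega} \widehat{\eta}\dif\nu_{K}\) is recognised as continuous on \(W_{0}^{1,2}(\Omega)\) (which follows from diffuseness of \(\nu_{K}\) together with an approximation by mollification in the spirit of the proof of Proposition~\ref{propositionSZero}), the remaining steps are a direct capacitary counterpart of the proof of Lemma~\ref{decomposition}.
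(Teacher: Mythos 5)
Your proposal is correct and follows essentially the same route as the paper: a contradiction argument built on a nonnegative diffuse measure concentrated on a compact subset of \(S \cap \{\widehat{\xi} \ne 0\}\) of positive capacity, the associated variational problem with negative infimum, and the identification of its minimizer with the (vanishing) duality solution. The only difference is the last step, where the paper simply tests the duality formulation with \(f = 1\) and uses \(\widehat{\zeta_{1}} = 0\) on \(S\) together with the nonnegativity of the minimizer, whereas you test with all \(f \in L^{\infty}(\Omega)\) and invoke Proposition~\ref{propositionSZero}; both are valid.
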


\begin{proof}[Proof of Proposition~\ref{propDecompositionCapacity}]
	We assume by contradiction that \(S \cap \{\widehat{\xi} \ne 0\}\) has positive \(W^{1, 2}\)~capacity. 
	Then, there exist a compact set \(K \subset S \cap \{\widehat{\xi} \ne 0\}\) with positive capacity and a nonnegative diffuse measure \(\nu \in \mathcal{M}(\Omega) \cap (W_{0}^{1, 2}(\Omega))'\) supported in \(K\) with \(\nu(K) > 0\)\,; see \cite{bookponce}*{Proposition~A.17}.{}
	We then have that \(\int_{\Omega} \widehat{\xi} \dif\nu > 0\) and the functional
	\[{}
	\eta \in  W_{0}^{1, 2}(\Omega) \cap L^{2}(\Omega; V^{+} \dif x)
	\longmapsto \frac{1}{2} \int_{\Omega} (|\nabla \eta|^{2} + V^{+}\eta^{2}) - \int_{\Omega} \widehat{\eta} \, \dif\nu
	\]
	has a nontrivial nonnegative minimizer \(v\), which is the duality solution of \eqref{eqDualitySolutionV+}.
	In particular, since \(\nu\) is supported in \(S\),
	\[{}
	\int_{\Omega} v
	= \int_{\Omega} \widehat{\zeta_{1}} \dif\nu
	= 0.
	\]
	We thus have a contradiction, whence \(S \cap \{\widehat{\xi} \ne 0\}\) has \(W^{1, 2}\)~capacity zero. 
\end{proof}

\begin{proof}[Proof of Proposition~\ref{propositionSobolevCutoff}]
	Since 
	\[{}
	\xi = \sum_{i \in I}{\xi \chi_{D_{i}}} + \xi \chi_{S}
	\]
	and, by Lemma~\ref{decomposition}, \(\xi = 0\) almost everywhere in \(S\), we are left to prove that \(\xi \chi_{D_{i}} \in W_{0}^{1, 2}(\Omega)\) for every \(i \in I\).

	Given \(\Phi \in C^{\infty}(\overline{\Omega}; \R^{N})\), let \(v\) be the duality solution of \eqref{eqDualitySolutionV+} with datum \(\nu = \Div{\Phi} \dif x\).
	Then, by Proposition~\ref{cutoffV+}, \(\bar v_{i} \vcentcolon= v \chi_{D_{i}}\) is the duality solution of \eqref{eqDualitySolutionV+} with datum \(\nu = (\Div{\Phi})\chi_{D_{i}} \dif x\).
	By uniqueness, \(v\) and \(\bar v_{i}\) are also variational solutions for the same data. 
	In particular, they both belong to \(W_{0}^{1, 2}(\Omega) \cap L^{2}(\Omega; V^{+} \dif x)\) and satisfy the associated Euler-Lagrange equations.

	We now observe that
	\begin{equation}
		\label{eq1049}
	\nabla \bar v_{i} = (\nabla v) \chi_{D_{i}}
	\quad \text{almost everywhere in \(\Omega\).}
	\end{equation}
	Indeed, this follows from the facts that \(\nabla \bar v_{i} = 0\) almost everywhere in \(\{\bar v_{i} = 0\} \supset \Omega \setminus D_{i}\) and
	\(\nabla (\bar v_{i} - v) = 0\) almost everywhere in \(\{\bar v_{i} - v = 0\} \supset D_{i}\).{}
	For \(\xi \in W_{0}^{1, 2}(\Omega) \cap L^{2}(\Omega; V^{+} \dif x)\), using the Euler-Lagrange equation satisfied by \(\bar v_{i}\) and \eqref{eq1049} we then have
	\[{}
	\int_{\Omega} \xi\chi_{D_{i}} \Div{\Phi}
	= \int_{\Omega} \nabla \bar v_{i} \cdot \nabla \xi + V^{+} \bar v_{i} \xi{}
	= \int_{D_{i}} \nabla v \cdot \nabla \xi + V^{+} v \xi.
	\]  
	Therefore,
	\begin{equation}
		\label{eq1147}
	\biggl| \int_{\Omega} \xi\chi_{D_{i}} \Div{\Phi} \biggr|{}
	\le \|v\| \|\xi\|,
	\end{equation}
	where we use the norm
	\[{}
	\|\eta\| \vcentcolon= \biggl[ \int_{\Omega} (\abs{\nabla\eta}^{2} + V^{+} \eta^{2}) \biggr]^{\frac{1}{2}}.
	\]
	Applying \(v\) as a test function in the Euler-Lagrange equation satisfied by \(v\) itself, we have 
	\[{}
	\|v\|^{2}
	 = \int_{\Omega} v \Div{\Phi}
	 = - \int_{\Omega} \nabla v \cdot \Phi{}
	 \le \|v\|\|\Phi\|_{L^{2}(\Omega)}.
	\]
	Thus, \(\|v\| \le \|\Phi\|_{L^{2}(\Omega)}\).{}
	Inserting this estimate in \eqref{eq1147} we get
	\begin{equation}
		\label{eq1165}
	\biggl| \int_{\Omega} \xi\chi_{D_{i}} \Div{\Phi} \biggr|{}
	\le \|\Phi\|_{L^{2}(\Omega)} \|\xi\|{}
	\quad \text{for every \(\Phi \in C^{\infty}(\overline{\Omega}; \R^{N})\).}
	\end{equation}
	We deduce from the Riesz Representation Theorem that \(\nabla(\xi\chi_{D_{i}}) \in L^{2}(\Omega; \R^{N})\).{}
	Since \(\Omega\) is smooth and \eqref{eq1165} holds for every smooth \(\Phi\) that need not have compact support in \(\Omega\), we conclude that \(\xi\chi_{D_{i}} \in W_{0}^{1, 2}(\Omega)\).
\end{proof}


\section{Duality solutions for signed potentials}
\label{sectionDuality}

We begin by extending the definition of duality solutions to the Dirichlet problem
\begin{equation}
	\label{eqDualitySolution}
	\left\{
	\begin{alignedat}{2}
	-\Delta  u+ V u & = \mu && \quad \text{in } \Omega,\\
	 u & = 0 && \quad  \text{on }  \partial \Omega,
	\end{alignedat}
	\right.
\end{equation}
with a signed potential \(V\):

\begin{definition}
Given $\mu\in \mathcal{M}(\Omega)$, we say that $u \in L^{1}(\Omega)$ is a \emph{duality solution} of \eqref{eqDualitySolution}
whenever \(V^{-}u \in L^{1}(\Omega)\) and
\[
\int_{\Omega}u f 
= \int_{\Omega} \widehat{\zeta_f} \dif\mu + \int_{\Omega} \zeta_f V^{-}u  \quad \text{for every } f \in L^{\infty}(\Omega),
\]
where \(\zeta_{f}\) is the minimizer of \eqref{eq731}.
\end{definition}

It is convenient to observe that, as the test functions \(\zeta_{f}\) depend on \(V^{+}\) and \(\widehat{\zeta_{f}} = \zeta_{f}\) almost everywhere in \(\Omega\), a duality solution of \eqref{eqDualitySolution} is also a duality solution of 
\begin{equation}
	\label{eqDualitySolutionPositivePotential}
	\left\{
	\begin{alignedat}{2}
	-\Delta  u+ V^{+} u & = \mu + V^{-} u && \quad \text{in } \Omega,\\
	 u & = 0 && \quad  \text{on }  \partial \Omega,
	\end{alignedat}
	\right.
\end{equation}
where \(\mu + V^{-} u\) is regarded as the datum of the problem.
In particular, the precise representative \(\widehat{u}\) is also well-defined quasi-everywhere in \(\Omega\) and we have the following direct consequences of Propositions~\ref{propositionSZero} and~\ref{cutoffV+}, respectively:

\begin{corollary}
	\label{propositionSZeroCorollary}
	If \(u\) is a duality solution of \eqref{eqDualitySolution} with datum \(\mu \in \mathcal{M}(\Omega)\), then
	\(\widehat{u} = 0\) quasi-everywhere in \(S\).
\end{corollary}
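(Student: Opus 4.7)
The plan is to reduce Corollary~\ref{propositionSZeroCorollary} to Proposition~\ref{propositionSZero} by exploiting the reinterpretation already highlighted in the excerpt: a duality solution of \eqref{eqDualitySolution} is automatically a duality solution of the nonnegative-potential problem \eqref{eqDualitySolutionPositivePotential} once one views $\mu + V^{-}u$ as the datum. This reinterpretation is legitimate because the defining identity
\[
\int_{\Omega} u f = \int_{\Omega}\widehat{\zeta_{f}} \dif\mu + \int_{\Omega}\zeta_{f}\, V^{-}u
\]
involves the very test functions $\zeta_{f}$ built from $V^{+}$, and because $\widehat{\zeta_{f}} = \zeta_{f}$ almost everywhere in $\Omega$ by Lebesgue differentiation, so the second integral may be rewritten against $\widehat{\zeta_{f}}$ without change.

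First I would verify that $\widetilde\mu \vcentcolon= \mu + V^{-}u \dif x$ is an element of $\mathcal{M}(\Omega)$. This is automatic: $\mu$ is a finite Borel measure by hypothesis, and the definition of duality solution for the signed problem requires $V^{-}u \in L^{1}(\Omega)$, so the absolutely continuous part $V^{-}u \dif x$ is finite as well. Rewriting the defining identity as
\[
\int_{\Omega} u f = \int_{\Omega}\widehat{\zeta_{f}} \dif\widetilde\mu \quad \text{for every } f \in L^{\infty}(\Omega),
\]
one recognizes that $u$ is the duality solution of \eqref{eqDualitySolutionV+} with datum $\widetilde\mu$ in the sense of Section~\ref{sectionDualityPositive}. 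In particular, $\widehat{u}$ is well-defined quasi-everywhere in $\Omega$ since $\Delta u$ is still a finite measure (namely $V u - \widetilde\mu$ up to sign, equivalently $V^{+}u \dif x - \widetilde\mu$), which is exactly the hypothesis under which precise representatives are available.

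Applying Proposition~\ref{propositionSZero} to $u$ viewed as the duality solution of \eqref{eqDualitySolutionV+} with datum $\widetilde\mu$, we conclude that $\widehat{u} = 0$ quasi-everywhere in $S$. There is no real obstacle here; the only point that deserves a brief sentence is the measurability of $V^{-}u$, which is a consequence of $V$ being Borel and $u \in L^{1}(\Omega)$, so that $\widetilde\mu$ genuinely belongs to $\mathcal{M}(\Omega)$ and Proposition~\ref{propositionSZero} may be invoked verbatim.
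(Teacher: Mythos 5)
Your argument is correct and is exactly the paper's: the corollary is stated there as a direct consequence of Proposition~\ref{propositionSZero} after observing that a duality solution of \eqref{eqDualitySolution} is a duality solution of \eqref{eqDualitySolutionV+} with datum \(\mu + V^{-}u\dif x\), which is finite because \(V^{-}u \in L^{1}(\Omega)\). Nothing further is needed.
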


\begin{corollary}
\label{cutoff}
Let \(i \in I\).{}
If $u$ is a duality solution of \eqref{eqDualitySolution} with datum \(\mu \in \mathcal{M}(\Omega)\), then
 $u\chi_{D_i}$ is a duality solution of \eqref{eqDualitySolution} with datum $\mu\lfloor_{D_i}{}$\,.
\end{corollary}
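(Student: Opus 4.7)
The plan is to reduce the signed-potential case directly to Proposition~\ref{cutoffV+} via the observation already recorded in the paper, namely that a duality solution of \eqref{eqDualitySolution} with datum $\mu$ is simultaneously a duality solution of \eqref{eqDualitySolutionPositivePotential}, where the nonnegative-potential problem now has datum $\nu := \mu + V^{-} u \in \mathcal{M}(\Omega)$. This measure is finite because $V^{-}u \in L^{1}(\Omega)$ by definition of duality solution of the signed equation, so Proposition~\ref{cutoffV+} applies to $u$ with datum $\nu$.

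Apply Proposition~\ref{cutoffV+} to conclude that $u\chi_{D_{i}}$ is a duality solution of \eqref{eqDualitySolutionV+} with datum $\nu\lfloor_{D_{i}}$. Since the restriction distributes over the sum and $V^{-}u$ is absolutely continuous with respect to Lebesgue measure, one has
\[
\nu\lfloor_{D_{i}} = \mu\lfloor_{D_{i}} + (V^{-}u\chi_{D_{i}}) \dif x = \mu\lfloor_{D_{i}} + V^{-}(u\chi_{D_{i}}) \dif x,
\]
where the last equality uses $\chi_{D_{i}}^{2} = \chi_{D_{i}}$. Writing out the duality identity in \eqref{eqDualitySolutionV+} with datum $\nu\lfloor_{D_{i}}$ yields, for every $f \in L^{\infty}(\Omega)$,
\[
\int_{\Omega}(u\chi_{D_{i}}) f = \int_{\Omega}\widehat{\zeta_{f}} \dif \mu\lfloor_{D_{i}} + \int_{\Omega} \zeta_{f}\, V^{-}(u\chi_{D_{i}}),
\]
which is precisely the defining relation of a duality solution of \eqref{eqDualitySolution} with datum $\mu\lfloor_{D_{i}}$. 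Since $V^{-}(u\chi_{D_{i}}) \in L^{1}(\Omega)$ trivially, all integrability hypotheses in the definition are met.

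I do not anticipate a genuine obstacle here: the real work has already been done in Proposition~\ref{cutoffV+} (relying on the orthogonality properties \((\ref{item-1012})\)--\((\ref{item-1015})\) of the Green's functions $G_{x}$) and in the remark that the test functions $\zeta_{f}$ in the definition of duality solution depend only on $V^{+}$, not on the sign of $V$. The only thing to double-check is the bookkeeping of measures: that $\nu := \mu + V^{-}u\dif x$ is in $\mathcal{M}(\Omega)$ and that its restriction to $D_{i}$ splits as above. Both are immediate from $V^{-}u \in L^{1}(\Omega)$ and the definition of $\lfloor_{D_{i}}$, so the corollary follows in essentially one line of algebra after invoking Proposition~\ref{cutoffV+}.
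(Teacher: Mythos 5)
Your proof is correct and follows exactly the route the paper intends: the paper states this corollary as a direct consequence of Proposition~\ref{cutoffV+} via the remark that a duality solution of \eqref{eqDualitySolution} is a duality solution of \eqref{eqDualitySolutionV+} with datum \(\mu + V^{-}u\,\dif x\), and your argument simply writes out that reduction together with the splitting of the restricted measure. The bookkeeping (finiteness of \(V^{-}u\,\dif x\), \(\chi_{D_{i}}^{2}=\chi_{D_{i}}\), and \(\widehat{\zeta_{f}}=\zeta_{f}\) a.e.) is handled properly.
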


We now rely upon \cite{OP}*{Proposition~4.1} for the operator \(-\Delta + V^{+}\) to deduce that a duality solution is a distributional solution with possibly different datum:

\begin{proposition}
	\label{propositionDictionary}
	If \(u\) is a duality solution of \eqref{eqDualitySolution} with datum \(\mu \in \mathcal{M}(\Omega)\), then \(u \in W_{0}^{1, 1}(\Omega)\) and
	\[{}
	- \Delta u + Vu = \mu\lfloor_{\Omega \setminus S}{} - \tau{}
	\quad \text{in the sense of distributions in \(\Omega\),}
	\] 
	where \(\tau \in \mathcal{M}(\Omega)\) is a diffuse measure such that \(|\tau|(\Omega \setminus S) = 0\).
\end{proposition}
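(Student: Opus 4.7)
The plan is to reduce to the case of the nonnegative potential $V^+$ and invoke \cite{OP}*{Proposition~4.1}, exactly as anticipated by the authors in the sentence preceding the statement. The first step is to exploit the observation already made just before Corollary~\ref{propositionSZeroCorollary}: a duality solution $u$ of \eqref{eqDualitySolution} with datum $\mu$ is simultaneously a duality solution of \eqref{eqDualitySolutionPositivePotential} with datum
\[
\widetilde\mu \vcentcolon= \mu + V^- u \dif x,
\]
which is a legitimate finite Borel measure on $\Omega$ precisely because the definition of duality solution of \eqref{eqDualitySolution} requires $V^- u \in L^1(\Omega)$.

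The second step is to apply \cite{OP}*{Proposition~4.1} to the operator $-\Delta + V^+$ with datum $\widetilde\mu$. This is the key input and provides both the Sobolev regularity $u \in W_0^{1,1}(\Omega)$ and a diffuse measure $\tau \in \mathcal{M}(\Omega)$ with $|\tau|(\Omega \setminus S) = 0$ such that
\[
-\Delta u + V^+ u = \widetilde\mu\lfloor_{\Omega\setminus S} - \tau
\quad \text{in the sense of distributions in $\Omega$.}
\]

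The third step is to rewrite the right-hand side in terms of $\mu$ and $V^- u$. By Corollary~\ref{propositionSZeroCorollary}, $\widehat u = 0$ quasi-everywhere on $S$, and in particular $u = 0$ Lebesgue-almost everywhere on $S$. Hence the absolutely continuous measure $V^- u \dif x$ is carried by $\Omega \setminus S$, which gives
\[
\widetilde\mu\lfloor_{\Omega\setminus S}
= \mu\lfloor_{\Omega\setminus S} + V^- u \dif x.
\]
Substituting this identity and moving $V^- u \dif x$ to the left-hand side transforms $-\Delta u + V^+ u$ into $-\Delta u + V u$ and produces the announced distributional equation.

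I do not anticipate a real obstacle, since the substantive analytical content is already contained in \cite{OP}*{Proposition~4.1}. The only subtle point is the bookkeeping that legitimises transferring $V^- u \dif x$ from the datum side to the operator side, and this is precisely what Corollary~\ref{propositionSZeroCorollary} is designed to handle by forcing $u$ to vanish almost everywhere on $S$. One should also double-check that the diffuseness of $\tau$ is preserved in this manipulation, which is automatic because the transfer only adds an absolutely continuous contribution to the datum, leaving the singular piece $\tau$ supported by $S$ untouched.
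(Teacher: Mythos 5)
Your proof is correct and follows essentially the same route as the paper's: reduce to the operator \(-\Delta + V^{+}\) with datum \(\mu + V^{-}u \dif x\), invoke \cite{OP}*{Proposition~4.1}, and use the vanishing of \(u\) almost everywhere on \(S\) to transfer \(V^{-}u \dif x\) back to the operator side. The only (routine) difference is that the paper first splits the datum into its positive and negative parts and applies \cite{OP}*{Proposition~4.1} to each separately --- that result being stated for nonnegative data --- and then subtracts the two equations, setting \(\tau = \tau_{1} - \tau_{2}\); your direct application to the signed measure requires this one-line linearity step.
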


Since \(\tau\) is diffuse and carried by \(S\), under the assumption 
\begin{equation}
	\label{eqIntroductionCapacity}
	\capt_{W^{1, 2}}{(S)} = 0,
\end{equation}  
it then follows that \(\tau = 0\).
The smallness condition \eqref{eqIntroductionCapacity} is directly verified with the existence of \(\xi \in W_{0}^{1, 2}(\Omega) \cap L^{2}(\Omega; V^{+} \dif x)\) whose level set \(\{\widehat{\xi} = 0\}\) has \(W^{1, 2}\) capacity zero; see Proposition~\ref{propDecompositionCapacity}.

\begin{proof}[Proof of Proposition~\ref{propositionDictionary}]
	Since \(u\) is a duality solution of \eqref{eqDualitySolutionPositivePotential}
	involving a finite measure in \(\Omega\), we have \(u \in W_{0}^{1, 1}(\Omega)\).{}
	Take the duality solutions \(u_{1}\) and \(u_{2}\) of \eqref{eqDualitySolutionV+} with data \((\mu + V^{-}u \dif x)^{+}\) and \((\mu + V^{-}u \dif x)^{-}\), respectively.
	By \cite{OP}*{Proposition~4.1}, there exist nonnegative diffuse measures \(\tau_{1}\) and \(\tau_{2}\) such that \(\tau_{1}(\Omega \setminus S) = \tau_{2}(\Omega \setminus S) = 0\) with
	\[{}
	- \Delta u_{1} + V^{+}u_{1} = (\mu + V^{-}u \dif x)^{+}\lfloor_{\Omega \setminus S}{} - \tau_{1}{}
	\] 
	and	
	\[{}
	- \Delta u_{2} + V^{+}u_{2} = (\mu + V^{-}u \dif x)^{-}\lfloor_{\Omega \setminus S}{} - \tau_{2}{}
	\] 
	in the sense of distributions in \(\Omega\).{}
	From Proposition~\ref{propositionSZeroae}, one has \(u = 0\) almost everywhere in \(S\). 
	Thus,
	\begin{equation}
	\label{eq-1001}
	V^{-}u\chi_{\Omega \setminus S}
	= V^{-}u
	\quad \text{almost everywhere in \(\Omega\).}
	\end{equation}
	Subtracting the equations satisfied by \(u_{1}\) and \(u_{2}\), and using \eqref{eq-1001}, we then get
	\begin{equation*}
	- \Delta u + V^{+}u = \mu\lfloor_{\Omega \setminus S}{} + V^{-}u - \tau{}
	\quad \text{in the sense of distributions in \(\Omega\),}
	\end{equation*}
	where \(\tau \vcentcolon= \tau_{1} - \tau_{2}\).
\end{proof}

Even for a nonnegative potential $V$ the measure $\tau$ which appears in Proposition~\ref{propositionDictionary} can be nonzero:

\begin{example}
	\label{exampleOrsinaPonce}
	Given \(\alpha \ge 1\), let \(V : B_{1} \to [0, +\infty]\) be defined for \(x = (x_{1}, \ldots, x_{N})\) by
	\[{}
	V(x) = \frac{1}{|x_{1}|^{\alpha}}.
	\]
	In this case, \(S = B_{1} \cap  \{x_{1} = 0\}\) and then \(B_{1} \setminus S\) has two connected components.  
	Since \(\zeta_{1}\) is a duality solution with constant datum \(1\), we have
	\[
	- \Delta \zeta_{1} + V \zeta_{1} = 1 - \tau
	\quad \text{in the sense of distributions in \(B_{1}\)\,,}
	\]
	for some finite measure \(\tau\) supported in \(\{x_{1} = 0\}\).{}
	When \(\alpha \ge 2\), we have \(\tau = 0\), but when \(1 \le \alpha < 2\) the Hopf lemma holds in each component of \(B_{1} \setminus S\) and implies that \(\tau(S) > 0\)\,; see \cite{OrsinaPonce:2008}*{Theorem~9.1}.
\end{example}

\begin{corollary}
	If \(u\) is a nonnegative distributional solution of \eqref{eqDualitySolution} with datum \(\mu \in \mathcal{M}(\Omega)\), then \(\mu\lfloor_{S}{} \le 0\).
\end{corollary}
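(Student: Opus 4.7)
The plan is to apply Proposition~\ref{propositionDictionary} after realizing $u$ as (or comparing it to) a duality solution, so that $-\mu\lfloor_{S}$ is identified with a nonnegative measure $\tau$ carried by $S$.

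First I would show $u = 0$ almost everywhere in $S$. Since $-\Delta u = \mu - Vu$ is a finite signed measure in $\Omega$, classical measure-data theory gives $T_{k}(u) \in W_{0}^{1,2}(\Omega)$ for every $k > 0$, and the bound $V^{+} T_{k}(u)^{2} \le k\, V^{+} u \in L^{1}(\Omega)$ places $T_{k}(u)$ in $L^{2}(\Omega; V^{+} \dif x)$. Lemma~\ref{decomposition} applied to $T_{k}(u)$ then yields $T_{k}(u) = 0$ almost everywhere in $S$, and letting $k \to \infty$ gives $u = 0$ a.e.\ in $S$. In particular, $V^{-}u\,\chi_{S} = 0$.

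Next I would rewrite the distributional equation as $-\Delta u + V^{+} u = \rho$ in $\mathcal{D}'(\Omega)$, where $\rho \vcentcolon= \mu + V^{-} u\, \dif x \in \mathcal{M}(\Omega)$; by the previous step, $\rho\lfloor_{S} = \mu\lfloor_{S}$. Letting $v$ be the duality solution of \eqref{eqDualitySolutionV+} with datum $\rho$, Proposition~\ref{propositionDictionary} gives
\[
-\Delta v + V^{+} v = \rho\lfloor_{\Omega\setminus S} - \tau
\quad \text{in $\mathcal{D}'(\Omega)$,}
\]
with $\tau \in \mathcal{M}(\Omega)$ diffuse and satisfying $|\tau|(\Omega \setminus S) = 0$. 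Subtracting, the difference $w \vcentcolon= u - v$ satisfies $-\Delta w + V^{+} w = \mu\lfloor_{S} + \tau$ in $\mathcal{D}'(\Omega)$, with right-hand side carried by $S$.

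Finally, I would pin down the identification $\mu\lfloor_{S} = -\tau$ together with the inequality $\tau \ge 0$, so that the conclusion follows. The sign of $\tau$ is obtained by tracing through the construction in Proposition~\ref{propositionDictionary}: $\tau$ arises as $\tau_{1} - \tau_{2}$, where $\tau_{1}$ and $\tau_{2}$ are the nonnegative measures coming from applying Proposition~\ref{propositionDictionary} to the nonnegative duality solutions with data $\rho^{+}$ and $\rho^{-}$; the hypothesis $u \ge 0$, together with the comparison principle \eqref{eq1399} and the strong maximum principle built into Section~\ref{sectionDualityPositive}, rules out a nontrivial contribution from $\tau_{2}$. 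This last sign analysis --- separating $\tau$ into its pieces and showing that under $u \ge 0$ only the nonnegative part survives --- is the main obstacle, since for signed data distributional and duality solutions can genuinely differ on $S$, as Example~\ref{exampleOrsinaPonce} illustrates.
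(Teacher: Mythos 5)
Your first two steps are sound: showing \(u=0\) a.e.\ in \(S\) (which also follows at once from Proposition~\ref{propositionSZeroae}, since every distributional solution is a duality solution with the same datum) and then comparing the distributional equation with the output of Proposition~\ref{propositionDictionary}. Note, though, that to conclude \(\mu\lfloor_{S}=-\tau\) you need \(w:=u-v\) to vanish; this holds precisely because \(u\) \emph{is} the duality solution with datum \(\mu\) and duality solutions are unique. Once you have that, equality of the two right-hand sides in the sense of distributions upgrades to equality as measures.

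The genuine gap is the sign of \(\tau\). Your proposed mechanism --- that \(u\ge 0\) ``rules out a nontrivial contribution from \(\tau_{2}\)'' in the decomposition \(\tau=\tau_{1}-\tau_{2}\) --- does not work: the negative part \(\rho^{-}=(\mu+V^{-}u\,\dif x)^{-}\) can be a nontrivial measure even when \(u\ge 0\) (only the difference \(u_{1}-u_{2}\) of the two auxiliary duality solutions is constrained to be nonnegative, not \(u_{2}\) itself), so the defect measure \(\tau_{2}\) carried by \(S\) need not vanish, and a priori \(\tau_{1}-\tau_{2}\) could have either sign. The comparison principle \eqref{eq1399} and the strong maximum principle give lower bounds for nonnegative solutions on the components \(D_{i}\)\,, i.e.\ off \(S\); they say nothing about the sign of a measure concentrated on \(S\). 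The paper closes this gap by a different device: since \(u\ge 0\) and, by Corollary~\ref{propositionSZeroCorollary}, \(\widehat{u}=0\) quasi-everywhere in \(S\), Kato's inequality yields that the diffuse part \((\Delta u)_{\mathrm d}\) is nonnegative on \(\{\widehat{u}=0\}\), hence quasi-everywhere on \(S\); on the other hand, on \(S\) this diffuse part equals \(\tau\), because \(Vu=0\) a.e.\ in \(S\) and \(\mu\lfloor_{S}=-\tau\) is diffuse. Hence \(\tau\ge 0\). Some version of this argument localized on \(S\) is what your proof is missing.
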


\begin{proof}
	We recall that a distributional solution is also a duality solution for the same datum.
	By comparison with the conclusion of Proposition~\ref{propositionDictionary}, we deduce that \(\mu\lfloor_{S}{} = -\tau\) in the sense of distributions in \(\Omega\), and then equality holds as measures; see \cite{bookponce}*{Proposition~6.12}. 
	We are left to show that \(\tau \ge 0\).{}
	To this end, we first identify the diffuse part \((\Delta u)_{\mathrm{d}}\) of the measure \(\Delta u\) with respect to the \(W^{1, 2}\) capacity.
	Since \(Vu = 0\) almost everywhere in \(S\) and \(\tau = -\mu\lfloor_{S}{}\) is diffuse, we have
	\begin{equation}
		\label{eq-1036}
	(\Delta u)_{\mathrm{d}}
	= \tau{}
	\quad \text{in \(S\).}
	\end{equation}
	On the other hand, as a consequence of Kato's inequality, by nonnegativity of \(u\) in \(\Omega\) we have
	\begin{equation}
		\label{eq-1043}
	(\Delta u)_{\mathrm{d}}
	\ge 0
	\quad \text{in \(\{\widehat{u} = 0\}\)\,;}
	\end{equation}
	see \cite{bookponce}*{Eq.~(6.5)}.
	By Corollary~\ref{propositionSZeroCorollary}, we have \(\widehat{u} = 0\) quasi-everywhere in \(S\).{}
	It thus follows from \eqref{eq-1036} and \eqref{eq-1043} that \(\mu\lfloor_{S}{} = - \tau \le 0\).
\end{proof}

\section{Approximation scheme}\label{Approximation}

Existence of a unique duality solution of \eqref{eqDualitySolutionV+} entitles us to construct a sequence of duality solutions for a family of truncated problems associated to \eqref{eqDualitySolution} with signed \(V\). 

\begin{proposition}\label{increasingsequence}
Let \(\mu\) be a nonnegative measurable function on $\Omega$ and let \((u_{n})_{n \in \N}\) be the sequence defined by induction as \(u_{0} = 0\) and, for \(n \ge 1\), \(u_{n}\) is the duality solution of
\begin{equation*}
\left\{
\begin{alignedat}{2}
-\Delta  u_{n} + V^+ u_{n}
& = T_{n}(\mu) + T_{n}(V^-) u_{n-1} && \quad \text{in } \Omega,\\
u_{n} 
& = 0 && \quad \text{on }  \partial\Omega.
\end{alignedat}
\right.
\end{equation*}
Then, for every \(n \in \N\), we have \(u_{n} \in W_{0}^{1, 2}(\Omega) \cap L^{2}(\Omega; V^{+} \dif x) \cap L^{\infty}(\Omega)\) and
\[
0\le u_{n}\le u_{n+1} \quad \text{almost everywhere in $\Omega$}.
\]
\end{proposition}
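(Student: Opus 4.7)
The plan is to prove both claims simultaneously by induction on $n$, exploiting the fact that when the datum of \eqref{eqDualitySolutionV+} is bounded the duality solution coincides with the minimizer $\zeta_{f}$ of \eqref{eq731}, as noted around \eqref{eq675}. The base case $n = 0$ is immediate since $u_{0} \equiv 0$ lies in every relevant space.

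For the inductive step, I would assume $u_{n-1} \in W_{0}^{1,2}(\Omega) \cap L^{2}(\Omega; V^{+} \dif x) \cap L^{\infty}(\Omega)$ with $u_{n-1} \ge u_{n-2} \ge 0$ almost everywhere (setting $u_{-1} \vcentcolon= 0$, so that the recursion matches since $T_{0} \equiv 0$), and set $f_{n} \vcentcolon= T_{n}(\mu) + T_{n}(V^{-})\, u_{n-1}$. By the induction hypothesis and the bounds $|T_{n}(\mu)|, |T_{n}(V^{-})| \le n$, the datum $f_{n}$ is bounded and nonnegative. Hence $u_{n} = \zeta_{f_{n}}$ belongs automatically to $W_{0}^{1,2}(\Omega) \cap L^{2}(\Omega; V^{+} \dif x)$, and the pointwise bound $|\zeta_{f_{n}}| \le \|f_{n}\|_{L^{\infty}(\Omega)}\, \zeta_{1}$ together with the boundedness of $\zeta_{1}$ (it is the difference of a continuous and a bounded subharmonic function) yields $u_{n} \in L^{\infty}(\Omega)$. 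Nonnegativity of $u_{n}$ is a direct consequence of the variational characterization: replacing $\zeta_{f_{n}}$ by $|\zeta_{f_{n}}|$ can only decrease the energy, so by uniqueness of the minimizer $\zeta_{f_{n}} = |\zeta_{f_{n}}| \ge 0$.

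For the monotonicity $u_{n} \ge u_{n-1}$, I would invoke linearity of duality solutions, inherited from the linearity of the Euler-Lagrange equation, to see that $u_{n} - u_{n-1}$ is the duality solution of \eqref{eqDualitySolutionV+} associated with the datum
\[
[T_{n}(\mu) - T_{n-1}(\mu)] + [T_{n}(V^{-})\, u_{n-1} - T_{n-1}(V^{-})\, u_{n-2}].
\]
Both brackets are nonnegative: the first because $\mu \ge 0$ and truncations increase with the level, the second because $T_{n}(V^{-}) \ge T_{n-1}(V^{-}) \ge 0$ and $u_{n-1} \ge u_{n-2} \ge 0$ by the induction hypothesis. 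Applying the nonnegativity step of the previous paragraph to this new datum then gives $u_{n} \ge u_{n-1}$. I do not expect any serious obstacle here: the whole argument rests on identifying $u_{n}$ with the minimizer $\zeta_{f_{n}}$ as soon as $f_{n}$ is bounded, after which both linearity and nonnegativity-preservation come essentially for free from the variational formulation.
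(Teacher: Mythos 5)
Your proof is correct and follows essentially the same route as the paper: induction on $n$, using that the datum $T_{n}(\mu)+T_{n}(V^{-})u_{n-1}$ is bounded and nonnegative, monotone in $n$, and that duality solutions with nonnegative data are nonnegative. The only cosmetic difference is that the paper derives the sign of $u_{n+1}-u_{n}$ by testing the duality identity against nonnegative $f\in L^{\infty}(\Omega)$ (using $\zeta_{f}\ge 0$), whereas you identify the difference with the variational minimizer and compare its energy with that of its absolute value; both mechanisms are available in the paper's framework.
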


\begin{proof}
For every nonnegative \(f \in L^{\infty}(\Omega)\), we have \(\zeta_{f} \ge 0\) almost everywhere in \(\Omega\).
Thus, by nonnegativity of \(\mu\),
\[{}
\int_{\Omega} u_{1}f
= \int_{\Omega} T_{1}(\mu) \zeta_{f}
\ge 0.
\]
Hence, \(u_{1} \ge 0 = u_{0}\) almost everywhere in \(\Omega\).
We next assume that the conclusion holds for some \(n \in \N\).{}
By nonnegativity of \(\mu\), \(T_{n+2}(\mu) \ge T_{n+1}(\mu)\).
Then, for every nonnegative \(f \in L^{\infty}(\Omega)\),
\[{}
\int_{\Omega} (u_{n + 2} - u_{n + 1}) f
\ge \int_{\Omega} (T_{n + 2}(V^{-})u_{n + 1} - T_{n + 1}(V^{-})u_{n}) \zeta_{f}.
\]
By the induction assumption, the integrand in the right-hand side is nonnegative and we deduce that $u_{n+2} - u_{n+1} \ge 0$ almost everywhere in \(\Omega\).{}
Finally, since $T_{n}(\mu) \in L^{\infty}(\Omega)$, we have by induction that $u_{n} \in L^{\infty}(\Omega)$ for every \(n \in \N\).{}
As \(u_{n} \in L^1(\Omega; V^+\dif x)\), we then have \(u_{n} \in L^2(\Omega; V^+\dif x)\).
Finally, since \(\Delta u_{n}\) is a finite measure in \(\Omega\), we also have by interpolation that $u_{n} \in W_{0}^{1,2}(\Omega)$.
\end{proof}


\section{Variational setting}

Throughout the section, we assume that there exists a measurable function  \(w_{i} : D_{i} \to (0, +\infty)\) such that \eqref{eqPoincareStrong} holds, whence
\[{}
\mathcal{H}_{i}(\Omega) \subset L^{2}(D_{i}; w_{i} \dif x).
\]
Denoting by \(E\) the functional defined in \(\mathcal{H}_{i}(\Omega)\) by  \eqref{eqFunctional}, the following property is standard in the Calculus of Variations:

\begin{proposition}\label{existence-sequence} 
For every $h\in L^2(D_{i}; w_{i} \dif x)$, the functional \(E\) has a unique minimizer \(\theta_{i, h}\) and any minimizing sequence of \(E\) is a Cauchy sequence in \(\mathcal{H}_{i}(\Omega)\).
\end{proposition}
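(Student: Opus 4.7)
The plan is to run the classical minimization argument for a quadratic functional on a Hilbert space, with all the work hidden in verifying that \(\mathcal{H}_{i}(\Omega)\) is indeed a Hilbert space and that the linear perturbation is continuous on it.

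First I would observe that, under the assumption \eqref{eqPoincareStrong}, if \(\xi \in H_{i}(\Omega)\) satisfies \(\|\xi\|_{i} = 0\), then \(\int_{D_{i}} w_{i}\xi^{2} = 0\) and hence \(\xi = 0\) almost everywhere in \(D_{i}\), so \(\|\cdot\|_{i}\) is in fact a norm on \(H_{i}(\Omega)\) coming from the inner product
\[
(\xi, \eta)_{i} \vcentcolon= \int_{D_{i}}(\nabla\xi \cdot \nabla\eta + V\xi\eta).
\]
Its completion \(\mathcal{H}_{i}(\Omega)\) is therefore a Hilbert space. The linear form \(L(\xi) \vcentcolon= \int_{D_{i}} w_{i} h \xi\) is controlled on \(H_{i}(\Omega)\) by Cauchy--Schwarz and \eqref{eqPoincareStrong}:
\[
|L(\xi)| \le \|h\|_{L^{2}(D_{i}; w_{i}\dif x)}\, \|\xi\|_{L^{2}(D_{i}; w_{i}\dif x)} \le \|h\|_{L^{2}(D_{i}; w_{i}\dif x)}\, \|\xi\|_{i},
\]
so \(L\) extends continuously to \(\mathcal{H}_{i}(\Omega)\) and \(E\) is well-defined on \(\mathcal{H}_{i}(\Omega)\). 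From this bound and Young's inequality one gets the coercivity estimate \(E(\xi) \ge \tfrac{1}{4}\|\xi\|_{i}^{2} - C\), which in particular shows that \(m \vcentcolon= \inf_{\mathcal{H}_{i}(\Omega)} E > -\infty\).

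Next, for the Cauchy property, I would apply the parallelogram identity in \(\mathcal{H}_{i}(\Omega)\) together with the linearity of \(L\): for \(\xi, \eta \in \mathcal{H}_{i}(\Omega)\),
\[
\|\xi - \eta\|_{i}^{2} = 2\|\xi\|_{i}^{2} + 2\|\eta\|_{i}^{2} - 4\Bignorm{\tfrac{\xi+\eta}{2}}_{i}^{2} = 4E(\xi) + 4E(\eta) - 8 E\bigl(\tfrac{\xi+\eta}{2}\bigr),
\]
the linear terms cancelling because \(L(\xi) + L(\eta) = 2L(\tfrac{\xi+\eta}{2})\). If \((\xi_{n})_{n\in\N}\) is a minimizing sequence, bounding the last term by \(8m\) yields \(\|\xi_{n} - \xi_{m}\|_{i}^{2} \le 4(E(\xi_{n}) - m) + 4(E(\xi_{m}) - m) \to 0\) as \(n, m \to \infty\), which is the Cauchy claim.

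Finally, by completeness of \(\mathcal{H}_{i}(\Omega)\), any such sequence converges to some \(\theta_{i, h} \in \mathcal{H}_{i}(\Omega)\); continuity of \(E\) (the quadratic part is continuous by definition of the norm and \(L\) is continuous by the above) gives \(E(\theta_{i, h}) = m\). For uniqueness, if \(\theta\) and \(\widetilde\theta\) were both minimizers, the alternating sequence \(\theta, \widetilde\theta, \theta, \widetilde\theta, \ldots\) would be minimizing and hence Cauchy, forcing \(\theta = \widetilde\theta\). The main conceptual point to be careful with is not the minimization itself, which is routine, but the passage from the seminormed space \(H_{i}(\Omega)\) to the Hilbert space \(\mathcal{H}_{i}(\Omega)\) and the continuous extension of \(L\): this is precisely what the Poincaré inequality \eqref{eqPoincareStrong} buys us.
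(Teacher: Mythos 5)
Your proposal is correct and follows essentially the same route as the paper: the parallelogram identity for the quadratic functional \(E\) yields both the Cauchy property of minimizing sequences and uniqueness, with completeness of \(\mathcal{H}_{i}(\Omega)\) giving existence. The extra details you supply (that \eqref{eqPoincareStrong} upgrades the seminorm to a norm, the continuity of the linear term, and the coercivity bound guaranteeing \(\inf E > -\infty\)) are exactly the points the paper leaves implicit, and your constants in the parallelogram identity are the correct ones.
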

\begin{proof}
Uniqueness of the minimizer follows from the parallelogram identity satisfied by the norm \(\|\cdot\|_{i}\) which yields
\begin{equation}
	\label{eqParallelogram}
\frac{1}{2} \|u - v\|_{i}^{2}
= E(u) + E(v) - 2 E\Bigl( \frac{u + v}{2} \Bigr){}
\quad \text{for every \(u, v \in \mathcal{H}_{i}(\Omega)\).}
\end{equation}
If \(u\) and \(v\) are both minimizers of \(E\), then \(\|u - v\|_{i}^{2} \le 0\).

We now observe that \(E\) is bounded from below, whence \(\inf{E} \in \R\).
Taking a minimizing sequence  $(\xi_j)_{j\in \mathbb{N}}$\,, it follows from \eqref{eqParallelogram} that, for every \(j, k \in \N\),
\[{}
\frac{1}{2} \|\xi_{j} - \xi_{k}\|_{i}^{2}
= E(\xi_{j}) + E(\xi_{k}) - 2 E\Bigl( \frac{\xi_{j} + \xi_{k}}{2} \Bigr){}
\le E(\xi_{j}) + E(\xi_{k}) - 2 \inf{E}.
\]
Observe that the right-hand side converges to zero as \(j, k \to \infty\).{}
Hence, $(\xi_j)_{j\in \mathbb{N}}$ is a Cauchy sequence and converges in \(\mathcal{H}_{i}(\Omega)\) to the unique minimizer of \(E\).
\end{proof}

\begin{proposition}
	\label{propPositive}
	If $h\in L^2(D_{i}; w_{i} \dif x)$ is nonnegative, then \(\theta_{i, h} \ge 0\) almost everywhere in \(D_{i}\)\,.	
\end{proposition}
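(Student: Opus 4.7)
My plan is to combine the uniqueness of the minimizer with a standard symmetrization trick: I would pass to absolute values in a minimizing sequence and observe that this can only decrease the energy $E$.

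Concretely, take a minimizing sequence $(\xi_n)$ of $E$ in $H_i(\Omega)$. By definition, each $\xi_n$ has the form $\eta_n \chi_{D_i}$ with $\eta_n \in W^{1,2}_0(\Omega) \cap L^2(\Omega; V^+ \dif x)$. Since Sobolev norms and $L^2(\Omega; V^+ \dif x)$ norms are preserved under $|\cdot|$, the function $|\eta_n|$ belongs to the same space, so $|\xi_n| = |\eta_n|\chi_{D_i} \in H_i(\Omega)$. Using $|\nabla|\eta_n||=|\nabla \eta_n|$ and $|\eta_n|^2=\eta_n^2$ almost everywhere, I get $\||\xi_n|\|_i = \|\xi_n\|_i$. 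Combined with the nonnegativity of $h$ and $w_i$, which gives $\int_{D_i} w_i h |\xi_n| \ge \int_{D_i} w_i h \xi_n$, this yields $E(|\xi_n|)\le E(\xi_n)$. Hence $(|\xi_n|)$ is itself a minimizing sequence.

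By Proposition~\ref{existence-sequence}, both $(\xi_n)$ and $(|\xi_n|)$ are Cauchy in $\mathcal{H}_i(\Omega)$ and converge to the unique minimizer $\theta_{i,h}$. The Poincaré inequality \eqref{eqPoincareStrong} shows that the inclusion $H_i(\Omega) \hookrightarrow L^2(D_i; w_i \dif x)$ is continuous, so it extends to the completion, and the convergence $|\xi_n|\to \theta_{i,h}$ persists in $L^2(D_i; w_i \dif x)$. Extracting an almost-everywhere convergent subsequence, the sign is preserved in the limit and $\theta_{i,h}\ge 0$ almost everywhere in $D_i$.

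The only delicate point is the last identification: an element of $\mathcal{H}_i(\Omega)$ is a priori just an equivalence class of Cauchy sequences, and one must invoke \eqref{eqPoincareStrong} to realize $\theta_{i,h}$ as an honest measurable function on $D_i$ for which pointwise sign information makes sense. Once this is set up, the rest of the argument is routine.
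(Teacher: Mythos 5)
Your argument is correct and is essentially the paper's own proof: the paper replaces the minimizing sequence \((\xi_j)\) by \((\xi_j^{+})\) using the splitting \(E(\xi_j)=E(\xi_j^{+})+E(-\xi_j^{-})\) together with \(E(-\xi_j^{-})\ge 0\), whereas you replace it by \((|\xi_j|)\) using \(\||\xi_j|\|_i=\|\xi_j\|_i\); both are the same symmetrization idea. The remaining steps — invoking Proposition~\ref{existence-sequence} to conclude that the modified sequence still converges to the unique minimizer \(\theta_{i,h}\), and using \eqref{eqPoincareStrong} to transfer the convergence to \(L^{2}(D_{i}; w_{i}\dif x)\) where the sign passes to the limit — coincide with the paper's.
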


\begin{proof}
Let \((\xi_{j})_{j \in \N}\) be a minimizing sequence in \(H_{i}(\Omega)\) of the functional \(E\).{}
For each \(j \in \N\), we write
\[{}
E(\xi_{j})
= E(\xi_{j}^{+}) + E(-\xi_{j}^{-}).
\]
Since \(h\) is nonnegative and \eqref{eqPoincareStrong} holds, we have \(E(-\xi_{j}^{-}) \ge 0\) and then
\[{}
E(\xi_{j}) \ge E(\xi_{j}^{+})
\]
Thus, \((\xi_{j}^{+})_{j \in \N}\) is also a minimizing sequence.
By Proposition~\ref{existence-sequence}, it converges to \(\theta_{i, h}\) in \(\mathcal{H}_{i}(\Omega)\) and then also in \(L^{2}(D_{i}; w_{i} \dif x)\).{}
In particular, \(\theta_{i, h} \ge 0\) almost everywhere in \(D_{i}\)\,.
\end{proof}

As an alternative to the previous proof based on minimizing sequences, one may rely on the decomposition \(\theta_{i, h} = \theta_{i, h}^{+} - \theta_{i, h}^{-}\) in \(\mathcal{H}_{i}(\Omega)\) as a difference between positive and negative parts.
It then suffices to use \(\theta_{i, h}^{-}\) in the Euler-Lagrange equation to deduce in a standard way that \(\theta_{i, h}^{-} = 0\).{}
The possibility of such an approach is ensured by the following

\begin{proposition}
	\label{propositionMinMax}
	Let \(i \in I\).{}
	If \(u, v \in \mathcal{H}_{i}(\Omega)\), then there exist \(f, g \in \mathcal{H}_{i}(\Omega)\) such that
	\(f = \min{\{u, v\}}\) and
	\(g = \max{\{u, v\}}\)
	almost everywhere in \(D_{i}\)
	and
	\[{}
	\norm{f}_{i}^{2} + \norm{g}_{i}^{2}
	\le \norm{u}_{i}^{2} + \norm{v}_{i}^{2}.
	\]
\end{proposition}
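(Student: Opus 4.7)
The plan is to first establish the stated inequality on the dense subspace $H_{i}(\Omega)$, where in fact equality holds, and then pass to $\mathcal{H}_{i}(\Omega)$ via a weak-compactness argument in this Hilbert space, crucially exploiting the continuous embedding $\mathcal{H}_{i}(\Omega) \hookrightarrow L^{2}(D_{i}; w_{i} \dif x)$ supplied by \eqref{eqPoincareStrong}.

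For $u, v \in H_{i}(\Omega)$, I write $u = \widetilde u \chi_{D_{i}}$ and $v = \widetilde v \chi_{D_{i}}$ with $\widetilde u, \widetilde v \in W_{0}^{1, 2}(\Omega) \cap L^{2}(\Omega; V^{+} \dif x)$. Setting $F \vcentcolon= \min\{\widetilde u, \widetilde v\}$ and $G \vcentcolon= \max\{\widetilde u, \widetilde v\}$, the standard lattice property of Sobolev functions yields $F, G \in W_{0}^{1, 2}(\Omega) \cap L^{2}(\Omega; V^{+} \dif x)$. Proposition~\ref{propositionSobolevCutoff} then ensures that $f \vcentcolon= F\chi_{D_{i}}$ and $g \vcentcolon= G\chi_{D_{i}}$ lie in $H_{i}(\Omega)$, and they coincide almost everywhere in $D_{i}$ with $\min\{u,v\}$ and $\max\{u,v\}$. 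The pointwise identities $F^{2} + G^{2} = \widetilde u^{2} + \widetilde v^{2}$ and $\abs{\nabla F}^{2} + \abs{\nabla G}^{2} = \abs{\nabla \widetilde u}^{2} + \abs{\nabla \widetilde v}^{2}$ almost everywhere, the latter resting on the classical Stampacchia equality of gradients on coincidence sets, combined with the fact that both sides of the $V$-integrand split symmetrically, deliver $\|f\|_{i}^{2} + \|g\|_{i}^{2} = \|u\|_{i}^{2} + \|v\|_{i}^{2}$.

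For general $u, v \in \mathcal{H}_{i}(\Omega)$, I take sequences $(u_{n})_{n \in \N}$ and $(v_{n})_{n \in \N}$ in $H_{i}(\Omega)$ with $u_{n} \to u$ and $v_{n} \to v$ in $\mathcal{H}_{i}(\Omega)$. By the continuous embedding into $L^{2}(D_{i}; w_{i} \dif x)$, these convergences also hold in this $L^{2}$-space, and after extracting a subsequence I may assume $u_{n} \to u$ and $v_{n} \to v$ pointwise almost everywhere in $D_{i}$. Setting $f_{n} = \min\{u_{n}, v_{n}\}$ and $g_{n} = \max\{u_{n}, v_{n}\}$ in $H_{i}(\Omega)$ as in the first step, the equality just proved yields
\[
\|f_{n}\|_{i}^{2} + \|g_{n}\|_{i}^{2} = \|u_{n}\|_{i}^{2} + \|v_{n}\|_{i}^{2} \longrightarrow \|u\|_{i}^{2} + \|v\|_{i}^{2},
\]
so both sequences are bounded in the Hilbert space $\mathcal{H}_{i}(\Omega)$.

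By reflexivity, I extract further subsequences so that $f_{n} \rightharpoonup f$ and $g_{n} \rightharpoonup g$ weakly in $\mathcal{H}_{i}(\Omega)$. The continuity of the embedding transfers these weak convergences into $L^{2}(D_{i}; w_{i} \dif x)$, and applying Mazur's lemma to pass to convex combinations, which inherit the pointwise limits of the $f_{n}$ and $g_{n}$, forces $f = \min\{u, v\}$ and $g = \max\{u, v\}$ almost everywhere in $D_{i}$. The weak lower semicontinuity of the Hilbert norm then gives
\[
\|f\|_{i}^{2} + \|g\|_{i}^{2}
\le \liminf_{n \to \infty} \bigl( \|f_{n}\|_{i}^{2} + \|g_{n}\|_{i}^{2} \bigr)
= \|u\|_{i}^{2} + \|v\|_{i}^{2}.
\]
The principal subtlety is matching the weak $\mathcal{H}_{i}$-limits of $(f_{n})$ and $(g_{n})$ with the pointwise almost-everywhere limits $\min\{u, v\}$ and $\max\{u, v\}$; this is precisely where the Poincaré inequality~\eqref{eqPoincareStrong} is indispensable, as without it one could not identify the elements of $\mathcal{H}_{i}(\Omega)$ with genuine almost-everywhere defined functions on $D_{i}$.
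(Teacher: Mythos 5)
Your proposal is correct and follows essentially the same route as the paper: approximate $u,v$ by sequences in $H_{i}(\Omega)$ converging in $\mathcal{H}_{i}(\Omega)$ and almost everywhere in $D_{i}$, use the pointwise identities for the squares and gradients of $\min$ and $\max$ to get the norm equality and hence boundedness, extract weak limits, identify them with $\min\{u,v\}$ and $\max\{u,v\}$, and conclude by weak lower semicontinuity. The only cosmetic differences are that you spell out via Proposition~\ref{propositionSobolevCutoff} why $\min\{u_{n},v_{n}\}$ and $\max\{u_{n},v_{n}\}$ lie in $H_{i}(\Omega)$ (a detail the paper leaves implicit) and that you invoke Mazur's lemma directly where the paper packages the same identification step as Lemma~\ref{lemmaClosureH} via the Banach--Saks property.
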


	We rely on the following closure property which is a convenient tool to identify the weak limit of a sequence in \(\mathcal{H}_{i}(\Omega)\)\,:

\begin{lemma}
	\label{lemmaClosureH}
	Let \((v_{n})_{n \in \N}\) be a sequence in \(\mathcal{H}_{i}(\Omega)\). 
	If \(v_{n} \to v\) almost everywhere in \(D_{i}\) and \(v_{n} \rightharpoonup \widetilde{v}\) in \(\mathcal{H}_{i}(\Omega)\), then \(v = \widetilde{v}\) almost everywhere in \(D_{i}\)\,.
\end{lemma}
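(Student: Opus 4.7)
The plan is to combine Mazur's lemma with the continuity of the embedding into $L^2(D_{i}; w_{i} \dif x)$ guaranteed by \eqref{eqPoincareStrong}. The core idea is that passing from weak convergence in $\mathcal{H}_{i}(\Omega)$ to a convex combination gives strong convergence, which (via the Poincaré inequality) transfers to $L^2(D_{i}; w_{i} \dif x)$ and then to pointwise convergence along a subsequence; but convex combinations of a pointwise convergent sequence must converge to the same pointwise limit.

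First I would note that \eqref{eqPoincareStrong} implies
\[
\int_{D_{i}} w_{i}\xi^{2} \le \norm{\xi}_{i}^{2}
\quad \text{for every \(\xi \in \mathcal{H}_{i}(\Omega)\),}
\]
so the inclusion $\mathcal{H}_{i}(\Omega) \hookrightarrow L^{2}(D_{i}; w_{i} \dif x)$ is bounded. Under this identification, the hypothesis $v_{n} \rightharpoonup \widetilde{v}$ in $\mathcal{H}_{i}(\Omega)$ yields $v_{n} \rightharpoonup \widetilde{v}$ in $L^{2}(D_{i}; w_{i} \dif x)$.

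Next I would apply Mazur's lemma in the Hilbert space $\mathcal{H}_{i}(\Omega)$: for each $k \in \N$ there exist $N_{k} \ge k$ and nonnegative scalars $\lambda_{k}^{(k)}, \dots, \lambda_{N_{k}}^{(k)}$ summing to $1$ such that the tail convex combination
\[
w_{k} \vcentcolon= \sum_{n=k}^{N_{k}} \lambda_{n}^{(k)} v_{n}
\]
satisfies $\norm{w_{k} - \widetilde{v}}_{i} \to 0$. By the continuous inclusion above, $w_{k} \to \widetilde{v}$ in $L^{2}(D_{i}; w_{i} \dif x)$, and after extracting a subsequence one has $w_{k} \to \widetilde{v}$ almost everywhere in $D_{i}$.

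Finally, I would verify that the same convex combinations converge pointwise to $v$ on the full-measure set where $v_{n} \to v$. Indeed, fix such an $x$ and $\epsilon > 0$; choose $n_{0}$ with $\abs{v_{n}(x) - v(x)} < \epsilon$ for every $n \ge n_{0}$. Then for every $k \ge n_{0}$ each index in the sum defining $w_{k}$ is at least $n_{0}$, so
\[
\abs{w_{k}(x) - v(x)}
\le \sum_{n=k}^{N_{k}} \lambda_{n}^{(k)} \abs{v_{n}(x) - v(x)}
\le \epsilon,
\]
proving $w_{k}(x) \to v(x)$. Uniqueness of almost-everywhere limits gives $v = \widetilde{v}$ almost everywhere in $D_{i}$. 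There is no serious obstacle: the only delicate point is that Mazur's lemma must be applied to a tail of the sequence so that the pointwise convergence of $v_{n}$ propagates to the convex combinations.
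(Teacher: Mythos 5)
Your proof is correct and follows essentially the same route as the paper, which uses the Banach--Saks property (Cesàro means of a subsequence) in place of your tail convex combinations from Mazur's lemma --- the paper itself notes these are interchangeable here. You correctly handle the one delicate point, namely restricting the convex combinations to tails of the sequence so that pointwise convergence propagates to them.
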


\begin{proof}[Proof of Lemma~\ref{lemmaClosureH}]
	By the Banach--Saks property in a Hilbert space (which is a more comfortable and precise version of Mazur's lemma), we can take a subsequence \((v_{n_{j}})_{j \in \N}\) that converges strongly to \(\widetilde{v}\) in \(\mathcal{H}_{i}(\Omega)\) in the Cesàro sense, that is
	\[{}
	\frac{1}{N + 1}\sum_{j = 0}^{N}{v_{n_{j}}} \to \widetilde{v}
	\quad \text{in \(\mathcal{H}_{i}(\Omega)\),}
	\]
	and then also in $L^{2}(D_{i}; w_{i} \dif x)$.
	By pointwise convergence of \((v_{n_{j}})_{j \in \N}\) to \(v\) in \(D_{i}\)\,, we deduce that \(v = \widetilde{v}\) almost everywhere in \(D_{i}\)\,.{}
\end{proof}

\begin{proof}[Proof of Proposition~\ref{propositionMinMax}]
	Take sequences \((u_{n})_{n \in \N}\) and \((v_{n})_{n \in \N}\) in \(H_{i}(\Omega)\) such that
	\[{}
	u_{n} \to u,
	\quad 
	v_{n} \to v
	\quad{}
	\text{in \(\mathcal{H}_{i}(\Omega)\) and almost everywhere in \(D_{i}\)\,.}
	\]
	For every \(n \in \N\), we have
	\[{}
	(\min{\{u_{n}, v_{n}\}})^{2} + (\max{\{u_{n}, v_{n}\}})^{2} 
	 = u_{n}^{2} + v_{n}^{2}
	\]
	and
	\[{}
	\abs{\nabla\min{\{u_{n}, v_{n}\}}}^{2} + \abs{\nabla\max{\{u_{n}, v_{n}\}}}^{2} 
	= \abs{\nabla u_{n}}^{2} + \abs{\nabla v_{n}}^{2}
	\]
	almost everywhere in \(D_{i}\).
	Thus, 
	\[{}
	\norm{\min{\{u_{n}, v_{n}\}}}_{i}^{2} + \norm{\max{\{u_{n}, v_{n}\}}}_{i}^{2}
	= \norm{u_{n}}_{i}^{2} + \norm{v_{n}}_{i}^{2}.
	\]
	In particular, the sequences \((\min{\{u_{n}, v_{n}\}})_{n \in \N}\) and \((\max{\{u_{n}, v_{n}\}})_{n \in \N}\) are bounded in \(\mathcal{H}_{i}(\Omega)\).{}
	Hence, we may extract subsequences that converge weakly in \(\mathcal{H}_{i}(\Omega)\) to \(f\) and \(g\), respectively.
	As they converge almost everywhere to \(\min{\{u, v\}}\) and \(\max{\{u, v\}}\) in \(D_{i}\)\,, the conclusion follows from Lemma~\ref{lemmaClosureH} and the lower semicontinuity of the norm under weak convergence.
\end{proof}

In contrast with the existence of a positive and negative parts in \(\mathcal{H}_{i}(\Omega)\), it is unclear whether every \(u \in \mathcal{H}_{i}(\Omega)\) admits a truncation \(T_{k}(u) \in \mathcal{H}_{i}(\Omega)\) for every \(k > 0\).
Let us now show that the minimizer $\theta_{i,h}$ is an upper bound for the approximating sequence constructed in Section~\ref{Approximation}.

\begin{proposition}
	\label{propositionUpperBoundMinimizer}
	If \(h \in L^{2}(D_{i}; w_{i} \dif x)\) is nonnegative and if \((u_{n})_{n \in \N}\) is the sequence defined in Proposition~\ref{increasingsequence} with $\mu = w_{i}h \chi_{D_{i}}$, then, for every \(n \in \N\), we have
\begin{equation*}
0 \le u_{n} \le \theta_{i, h} \quad \text{almost everywhere in \(D_{i}\)\,.}
\end{equation*}
\end{proposition}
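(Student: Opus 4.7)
The plan is to argue by induction on $n$. The base case $n=0$ is immediate since $u_0\equiv 0$ and $\theta_{i,h}\ge 0$ almost everywhere in $D_i$ by Proposition~\ref{propPositive}. For the inductive step, I would suppose $u_n\le \theta_{i,h}$ almost everywhere in $D_i$ and set $\bar u_{n+1}:=u_{n+1}\chi_{D_i}$. By Proposition~\ref{cutoffV+}, $\bar u_{n+1}$ is the duality solution on $\Omega$ with datum $(T_{n+1}(\mu)+T_{n+1}(V^-)u_n)\chi_{D_i}\dif x$; together with Proposition~\ref{propositionSobolevCutoff} and uniqueness of variational solutions, this places $\bar u_{n+1}$ in $H_i(\Omega)$ and supplies the identity
\[
\int_{D_i}\bigl(\nabla \bar u_{n+1}\cdot\nabla\xi + V^+ \bar u_{n+1}\,\xi\bigr) = \int_{D_i}\bigl(T_{n+1}(\mu) + T_{n+1}(V^-)\,u_n\bigr)\xi
\]
against admissible test functions $\xi$.

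Using $T_{n+1}(\mu)\le w_ih$ and $T_{n+1}(V^-)u_n\le V^-\theta_{i,h}$ on $D_i$ (the latter by the inductive hypothesis) and regrouping $V=V^+-V^-$, for nonnegative $\xi$ one would deduce
\[
\int_{D_i}\bigl(\nabla \bar u_{n+1}\cdot\nabla\xi + V\bar u_{n+1}\,\xi\bigr) \le \int_{D_i} w_ih\,\xi + \int_{D_i} V^-(\theta_{i,h}-\bar u_{n+1})\,\xi.
\]
Subtracting the Euler--Lagrange identity $\int_{D_i}(\nabla\theta_{i,h}\cdot\nabla\xi + V\theta_{i,h}\,\xi)=\int_{D_i} w_ih\,\xi$ satisfied by the minimizer $\theta_{i,h}$ cancels the $w_ih$ terms and leaves
\[
\int_{D_i}\bigl(\nabla(\bar u_{n+1}-\theta_{i,h})\cdot\nabla\xi + V(\bar u_{n+1}-\theta_{i,h})\,\xi\bigr) \le \int_{D_i} V^-(\theta_{i,h}-\bar u_{n+1})\,\xi.
\]
Testing with $\xi=(\bar u_{n+1}-\theta_{i,h})^+\in \mathcal H_i(\Omega)$ (via Proposition~\ref{propositionMinMax}), the right-hand side becomes $-\int_{D_i} V^-\xi^2\le 0$ on the support $\{\xi>0\}$, while the left-hand side equals $\|(\bar u_{n+1}-\theta_{i,h})^+\|_i^2$ by the standard orthogonality of positive and negative parts with respect to the bilinear form associated to $\|\cdot\|_i$. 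Hence $\|(\bar u_{n+1}-\theta_{i,h})^+\|_i^2\le 0$, and the weighted Poincar\'e inequality~\eqref{eqPoincareStrong} together with $w_i>0$ forces $(\bar u_{n+1}-\theta_{i,h})^+=0$ almost everywhere in $D_i$, closing the induction.

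The hard part will be legitimating the choice $\xi=(\bar u_{n+1}-\theta_{i,h})^+$ in the identity for $\bar u_{n+1}$: that identity is naturally stated on $H_i(\Omega)$, whereas the positive part belongs \emph{a priori} only to the completion $\mathcal H_i(\Omega)$. To bridge this gap I would first work with the truncated surrogate $\bar u_{n+1}-\min\{\bar u_{n+1},T_k(\theta_{i,h})\}$, which remains bounded thanks to $\bar u_{n+1}\in L^\infty(\Omega)$ and the bound $T_k(\theta_{i,h})\le k$, and then pass to the limit $k\to\infty$ using the continuity of the inner product $(\cdot,\cdot)_i$ on $\mathcal H_i(\Omega)$ and monotone convergence on both sides.
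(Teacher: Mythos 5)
Your algebraic reductions (the bounds \(T_{n+1}(\mu)\le w_ih\) and \(T_{n+1}(V^-)u_n\le V^-\theta_{i,h}\), the regrouping of \(V=V^+-V^-\)) are fine, but the comparison scheme cannot be carried out in the spaces where these objects actually live, and this is precisely the difficulty the paper's proof is built to avoid. The minimizer \(\theta_{i,h}\) belongs only to the abstract completion \(\mathcal{H}_i(\Omega)\); by Example~\ref{exampleSupersolution} it need not belong to \(W_0^{1,2}(\Omega)\), and by Example~\ref{exampleNonZero} one should not expect \(V^-\theta_{i,h}^2\in L^1(D_i)\). Hence the ``Euler--Lagrange identity'' you subtract, \(\int_{D_i}(\nabla\theta_{i,h}\cdot\nabla\xi+V\theta_{i,h}\,\xi)=\int_{D_i}w_ih\,\xi\), is not a meaningful integral identity — it holds only in the abstract form \(\ps{\theta_{i,h}}{\xi}_{i}=\int_{D_i}w_ih\,\xi\) — and the difference \(\int_{D_i}\bigl(\nabla(\bar u_{n+1}-\theta_{i,h})\cdot\nabla\xi+V(\bar u_{n+1}-\theta_{i,h})\xi\bigr)\) is simply undefined. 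Symmetrically, \((\bar u_{n+1}-\theta_{i,h})^+\) lies only in \(\mathcal{H}_i(\Omega)\) and is not an admissible test function in the concrete variational identity for \(\bar u_{n+1}=u_{n+1}\chi_{D_i}\), which requires \(\xi\in W_0^{1,2}(\Omega)\cap L^2(\Omega;V^+\dif x)\). Your proposed repair via \(T_k(\theta_{i,h})\) does not close this gap: the paper states, immediately before Proposition~\ref{propositionUpperBoundMinimizer}, that it is unclear whether truncations of elements of \(\mathcal{H}_i(\Omega)\) remain in \(\mathcal{H}_i(\Omega)\), and even if they did they would still not be admissible for the identity satisfied by \(\bar u_{n+1}\). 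Finally, the ``standard orthogonality'' \(\ps{v}{v^+}_{i}=\norm{v^+}_i^2\) is not established for elements of the completion: Proposition~\ref{propositionMinMax} produces \(v^{\pm}\) only as weak limits with a norm \emph{inequality}, not the pointwise cancellations \(\nabla v^+\cdot\nabla v^-=0\) and \(v^+v^-=0\) that you are implicitly invoking.

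The paper's proof sidesteps every one of these issues by never testing an equation against \(\theta_{i,h}\) or its positive/negative parts. It works with a minimizing sequence \((\xi_j)_{j\in\N}\subset H_i(\Omega)\), where minima, maxima and all integrals are concrete; it exploits the minimality of \(u_n\) for the auxiliary functional \(F_n\) to get \(F_n(u_n)\le F_n(\min\{u_n,\xi_j\})\); it converts this, via the identity \(E(\max\{u_n,\xi_j\})+E(\min\{u_n,\xi_j\})=E(u_n)+E(\xi_j)\), into the bound \(E(\max\{u_n,\xi_j\})\le E(\xi_j)+\int_\Omega V^-(u_{n-1}-\xi_j)^+(u_n-\xi_j)^+\); and it kills the error term as \(j\to\infty\) using the induction hypothesis \(u_{n-1}\le\theta_{i,h}\) and dominated convergence. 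Then \(\max\{u_n,\xi_j\}\to\theta_{i,h}\) in \(L^2(D_i;w_i\dif x)\) yields \(u_n\le\theta_{i,h}\) directly. If you want to salvage a comparison-type argument, you must find a way to remain inside \(H_i(\Omega)\) throughout; as written, your proof has a genuine gap.
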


\begin{proof}
Since \(u_{0} = 0\), the case \(n = 0\) is a consequence of Proposition~\ref{propPositive}.
We now assume by induction that the inequality holds for \(n - 1\) for some \(n \ge 1\).
Our goal is to show that if \((\xi_{j})_{j \in \N}\) is a minimizing sequence in \(H_{i}(\Omega)\) of the functional \(E\), then \((\max{\{u_{n}, \xi_{j}\}})_{j \in \N}\) is also a minimizing sequence.
To this end, we begin by observing that $u_{n} \in W^{1,2}_0(\Omega) \cap L^2(\Omega; V^+ \dif x)$ is the minimizer of the functional \(F_{n}\) defined on \(W_{0}^{1, 2}(\Omega) \cap L^{2}(\Omega; V^{+} \dif x)\) by
	\[
	F_{n} (\eta) = \frac{1}{2} \int_{\Omega}(\abs{\nabla \eta}^{2} + V^{+}\eta^{2}) - \int_{\Omega} \bigl(T_{n}(w_{i}h\chi_{D_{i}}) + T_{n}(V^{-}) u_{n - 1}\bigr) \eta.
	\]
In particular, for every \(\xi \in H_{i}(\Omega)\),
	\begin{equation}
		\label{eqClaimComparisonApproximation}
	F_{n}(u_{n}) 
	\le F_{n}(\min{\{u_{n}, \xi\}}). 
	\end{equation}

We next show the following consequence of \eqref{eqClaimComparisonApproximation} for the functional \(E\)\,:{}
	\begin{equation}
		\label{eq922}
		E(u_{n}) \le 
		E(\min{\{u_{n}, \xi\}}) + \int_{\Omega} V^{-} (u_{n - 1} - \xi)^{+} (u_{n} - \xi)^{+}.
	\end{equation}
Indeed, for every \(\eta \in H_{i}(\Omega)\),{}
\[{}
E(\eta){}
= F_{n}(\eta){}
- \frac{1}{2} \int_{\Omega} V^{-}\eta^{2} + \int_{\Omega} T_{n}(V^{-}) u_{n-1} \eta - \int_{D_{i}} (w_{i} h - T_{n}(w_{i} h)) \eta.
\]
Applying \eqref{eqClaimComparisonApproximation} and \(u_{n} \ge \min{\{u_{n}, \xi\}}\), we get
\[{}
\begin{split}
E(u_{n}) - E(\min{\{u_{n}, \xi\}})
& \le - \frac{1}{2} \int_{\Omega} V^{-} (u_{n}^{2} - \min{\{u_{n}, \xi\}}^{2}) + \int_{\Omega} V^{-} u_{n-1} (u_{n} - \min{\{u_{n}, \xi\}})\\
& = \int_{\Omega} V^{-} \Bigl(u_{n - 1} - \frac{u_{n} + \min{\{u_{n}, \xi\}}}{2}\Bigr) (u_{n} - \min{\{u_{n}, \xi\}}).
\end{split}
\]
Thus,
\[{}
\begin{split}
E(u_{n}) - E(\min{\{u_{n}, \xi\}})
& \le \int_{\{u_{n} > \xi\}} V^{-} \Bigl(u_{n - 1} - \frac{u_{n} + \xi}{2}\Bigr)(u_{n} - \xi) \\
& \le \int_{\Omega} V^{-} (u_{n - 1} - \xi)^{+} (u_{n} - \xi)^{+},
\end{split}
\]
which gives \eqref{eq922}.
	
	Using a standard property of the maximum and minimum between two functions, we then get from \eqref{eq922} that
	\begin{equation}
	\label{eq793}
	\begin{split}
		E(\max{\{u_{n}, \xi\}}) 
		& = E(\xi) + E(u_{n}) - E(\min{\{u_{n}, \xi\}})\\
		& \le	E(\xi) + \int_{\Omega} V^{-} (u_{n - 1} - \xi)^{+} (u_{n} - \xi)^{+}.
	\end{split}
	\end{equation}
	As we mentioned above, we now apply this estimate to a minimizing sequence \((\xi_{j})_{j \in \N}\) in \(H_{i}(\Omega)\) of the functional \(E\).
	We first recall that, by Proposition~\ref{existence-sequence}, we have \(\xi_{j} \to \theta_{i, h}\) in \(\mathcal{H}_{i}(\Omega)\).{}
	Since the functions \(u_{n}\) and \(u_{n - 1}\) are bounded and belong to \(L^{1}(\Omega; V^{-} \dif x)\), they also belong to \(L^{2}(\Omega; V^{-} \dif x)\).{}
	By the Dominated Convergence Theorem and the induction assumption \(u_{n - 1} \le \theta_{i, h}\) in \(D_{i}\)\,, we then have
	\[{}
	\lim_{j \to \infty}{\int_{\Omega} V^{-} (u_{n - 1} - \xi_{j})^{+} (u_{n} - \xi_{j})^{+}}
	= \int_{\Omega} V^{-} (u_{n - 1} - \theta_{i, h})^{+} (u_{n} - \theta_{i, h})^{+}
	= 0.
	\]
	We then deduce from \eqref{eq793} that
	\[{}
	\limsup_{j \to \infty}{E(\max{\{u_{n}, \xi_{j}\}})}
	\le \lim_{j \to \infty}{E(\xi_{j})} + 0 = \inf{E}.
	\]
	Hence, \((\max{\{u_{n}, \xi_{j}\}})_{j \in \N}\) is also a minimizing sequence of \(E\) as claimed.{}
	Therefore, 
	\(\max{\{u_{n}, \xi_{j}\}} \to \theta_{i, h}\) in \(\mathcal{H}_{i}(\Omega)\) as \(j \to \infty\)
	and then, by \eqref{eqPoincareStrong}, 
	\[{}
	u_{n} \le \max{\{u_{n}, \xi_{j}\}} \to \theta_{i, h}
	\quad \text{in \(L^{2}(D_{i}; w_{i} \dif x)\).}
	\]
	Thus, \(u_{n} \le \theta_{i, h}\) almost everywhere in \(D_{i}\)\,.
\end{proof}

We now identify the limit of the sequence \((u_{n})_{n \in \N}\) with the minimizer \(\theta_{i, h}\)\,:

\begin{proposition}
	\label{propositionLimitMinimizer}
	Let \(h \in L^{2}(D_{i}; w_{i} \dif x)\) be a nonnegative function and let \((u_{n})_{n \in \N}\) be the sequence defined in Proposition~\ref{increasingsequence} with \(\mu = w_{i}h \chi_{D_{i}}\).{}
	If \((u_{n})_{n \in \N}\) is bounded in \(L^{2}(D_{i}; w_{i} \dif x)\) and in \(L^{1}(D_{i}; V^{-} \dif x)\), then
	\[{}
	\lim_{n \to \infty}{u_{n}}
	= \theta_{i, h}
	\quad \text{almost everywhere in \(D_{i}\)\,.}
	\]
\end{proposition}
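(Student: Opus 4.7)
The plan is to identify the pointwise limit $u_\infty$ of the nondecreasing sequence $(u_n)_{n \in \N}$ as the unique minimizer $\theta_{i,h}$ of $E$, by checking that $u_\infty \in \mathcal{H}_i(\Omega)$ and satisfies the same Euler-Lagrange equation. By Propositions~\ref{increasingsequence} and~\ref{propositionUpperBoundMinimizer}, the sequence is nondecreasing with $0 \le u_n \le \theta_{i,h}$ almost everywhere in $D_i$, hence it converges pointwise to some $u_\infty \le \theta_{i,h}$. Iterating Proposition~\ref{cutoffV+} one verifies by induction that each $u_n$ vanishes almost everywhere outside $D_i$; moreover, since $T_n(w_i h \chi_{D_i}) + T_n(V^-) u_{n-1} \in L^\infty(\Omega)$, each $u_n$ belongs to $W^{1,2}_0(\Omega) \cap L^2(\Omega; V^+ \dif x)$ and coincides with the variational minimizer of the functional $F_n$ introduced in the proof of Proposition~\ref{propositionUpperBoundMinimizer}.

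Testing the Euler-Lagrange equation of $F_n$ against $u_n$ itself and writing $V = V^+ - V^-$ on the left-hand side, one obtains
\[
\|u_n\|_i^2 + \int_{D_i} T_n(V^-)(u_n - u_{n-1}) u_n + \int_{D_i} (V^- - T_n(V^-)) u_n^2 = \int_{D_i} T_n(w_i h) u_n.
\]
Since the two auxiliary terms on the left-hand side are nonnegative, Cauchy-Schwarz gives $\|u_n\|_i^2 \le \|h\|_{L^2(D_i; w_i \dif x)} \|u_n\|_{L^2(D_i; w_i \dif x)}$, which is uniform in $n$ by hypothesis. Reflexivity of $\mathcal{H}_i(\Omega)$ together with Lemma~\ref{lemmaClosureH} applied to any weakly convergent subsequence and the pointwise identification of its limit then force $u_\infty \in \mathcal{H}_i(\Omega)$ and $u_n \rightharpoonup u_\infty$ weakly in $\mathcal{H}_i(\Omega)$.

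For the passage to the limit, take $\eta \in H_i(\Omega) \cap L^\infty(\Omega)$. Regrouping the Euler-Lagrange equation for $u_n$ yields
\[
\int_{D_i} \nabla u_n \cdot \nabla \eta + V u_n \eta = \int_{D_i} T_n(w_i h)\eta - \int_{D_i} T_n(V^-)(u_n - u_{n-1})\eta - \int_{D_i}(V^- - T_n(V^-))u_n \eta.
\]
The left-hand side tends to $\int_{D_i} \nabla u_\infty \cdot \nabla \eta + V u_\infty \eta$ by weak convergence in $\mathcal{H}_i(\Omega)$, the first integral on the right tends to $\int_{D_i} w_i h \eta$ by dominated convergence, while the last two integrals vanish in the limit by dominated convergence with dominator $\|\eta\|_{L^\infty(\Omega)} V^- u_\infty$; the key point here is that Fatou's lemma together with the assumed bound of $(u_n)_{n \in \N}$ in $L^1(D_i; V^- \dif x)$ makes $V^- u_\infty$ integrable on $D_i$. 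Thus $u_\infty$ satisfies the Euler-Lagrange equation of $E$ against every such $\eta$, and by continuity of both sides in the $\mathcal{H}_i(\Omega)$-norm together with density of bounded functions in $H_i(\Omega)$ (obtained by truncating the generating functions in $W^{1,2}_0(\Omega) \cap L^2(\Omega; V^+ \dif x)$), the identity extends to all of $\mathcal{H}_i(\Omega)$. Uniqueness of the minimizer (Proposition~\ref{existence-sequence}) then forces $u_\infty = \theta_{i,h}$ almost everywhere in $D_i$. The main obstacle is the treatment of the term $\int_{D_i}(V^- - T_n(V^-))u_n \eta$: its control is tight and requires precisely the $L^1(D_i; V^- \dif x)$ assumption to furnish an integrable dominator, which is the reason this hypothesis appears in the statement.
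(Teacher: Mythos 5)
Your proposal is correct and follows essentially the same route as the paper: a uniform bound in \(\mathcal{H}_{i}(\Omega)\) obtained by testing the Euler--Lagrange equation with \(u_{n}\) and exploiting monotonicity, identification of the weak limit with the pointwise limit via Lemma~\ref{lemmaClosureH}, and passage to the limit in the Euler--Lagrange equation against bounded test functions, where the \(L^{1}(D_{i}; V^{-}\dif x)\) bound supplies the integrable dominator, followed by truncation and density. The only cosmetic difference is your invocation of Proposition~\ref{propositionUpperBoundMinimizer} for the upper bound \(u_{n} \le \theta_{i,h}\) (the paper only needs finiteness of the pointwise limit) and your writing of \(\ps{u_{\infty}}{\eta}_{i}\) as an explicit integral, which should be understood as the abstract inner product of the completion.
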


\begin{proof}
We first show that \((u_{n})_{n \in \N}\) is bounded in \(\mathcal{H}_{i}(\Omega)\).{}
To this end, we recall that, for every \(n \in \N\), we have $u_n\in H_i(\Omega)$ and
\begin{equation}\label{eqEulerLagrangeApproximation-Bis}
\int_{D_i} (\nabla u_n\cdot \nabla \xi + V^+ u_n \xi){}
= \int_{D_i} (T_{n}(w_{i} h) + T_{n}(V^{-}) u_{n-1})\xi 
\end{equation}
for every \(\xi \in W_{0}^{1, 2}(\Omega) \cap L^{2}(\Omega; V^{+} \dif x).\)
On the other hand, since \((u_{n})_{n \in \N}\) is nondecreasing,		
		\[
		\norm{u_{n}}_{i}^{2}
		= \int_{D_{i}} (\abs{\nabla u_{n}}^{2} + V u_{n}^{2})
		\le \int_{D_{i}} (\abs{\nabla u_{n}}^{2} + V^{+} u_{n}^{2} - T_{n}(V^{-}) u_{n - 1} u_{n}).
		\]
		Taking \(\xi = u_{n}\) in \eqref{eqEulerLagrangeApproximation-Bis}, we get
		\[
		\norm{u_{n}}_{i}^{2}
		\le \int_{D_{i}} T_{n}(w_{i}h) u_{n}
		\le \int_{D_{i}} w_{i} h u_{n}.
		\]
		Since \(h \in L^{2}(D_{i}; w_{i} \dif x)\) and \((u_{n})_{n \in \N}\) is bounded in \(L^{2}(D_{i}; w_{i} \dif x)\), it follows from this estimate that \((u_{n})_{n \in \N}\) is also bounded in \(\mathcal{H}_i(\Omega)\). Therefore, there exists a subsequence $(u_{n_{j}})_{j \in \N}$ such that
		\[
			u_{n_{j}} \rightharpoonup \widetilde{u} \quad \text{in \(\mathcal{H}_i(\Omega)\).} 
		\]
		Since $(u_{n})_{n \in \N}$ is nondecreasing, it converges pointwise and then, by Lemma~\ref{lemmaClosureH},
		\begin{equation*}
		\widetilde{u} 
		= \lim_{j \to \infty}{u_{n_{j}}}
		= \lim_{n \to \infty}{u_{n}}
		\quad \text{almost everywhere in \(D_{i}\)\,.}
		\end{equation*}
		As \((u_{n})_{n \in \N}\) is bounded in \(L^{1}(D_{i}; V^{-} \dif x)\), we also have \(\widetilde{u} \in L^{1}(D_{i}; V^{-} \dif x)\).
	
	Recalling that \(E\) has a unique minimizer \(\theta_{i, h}\) in \(\mathcal{H}_i(\Omega)\), to conclude the proof of the proposition it suffices to show that \(\widetilde u\) satisfies the Euler-Lagrange equation:
	\begin{equation}
	\label{eqEulerLagrangeDirichlet-Bis}
	\ps{\widetilde u}{\xi}_{i}
	= \int_{D_i} w_{i} h\xi
	\quad \text{for every \(\xi \in \mathcal{H}_i(\Omega)\),}
	\end{equation}
	where \(\ps{\cdot}{\cdot}_{i}\) denotes the inner product of \(\mathcal{H}_i(\Omega)\) associated to its norm.
	Since we do not know whether \(\widetilde u \in L^{2}(D_{i}; V \dif x)\), one should be careful when letting \(n \to \infty\) in \eqref{eqEulerLagrangeApproximation-Bis}.
	
	To overcome this difficulty, we proceed with \(\xi \in H_{i}(\Omega) \cap L^{\infty}(\Omega)\).
	We rewrite \eqref{eqEulerLagrangeApproximation-Bis} with \(n = n_{j}\) as
	\begin{equation}
		\label{eqEulerLagrangeApproximation-Variant-Bis}
	\ps{u_{n_{j}}}{\xi}_{i} + \int_{D_{i}} (V^{-}u_{n_{j}} - T_{n_{j}}(V^{-}) u_{n_{j} - 1})\xi
	= \int_{D_{i}} T_{n_{j}}(w_{i}h) \xi.
	\end{equation}
	Since \(0 \le u_{n} \le \widetilde u\) in \(D_{i}\)\,, \(\widetilde{u} \in L^{1}(D_{i}; V^{-} \dif x)\) and \(\xi \in L^{\infty}(\Omega)\), we deduce from the Dominated Convergence Theorem that
	\[
	\lim_{j \to \infty}{\int_{D_{i}} (V^{-}u_{n_{j}} - T_{n_{j}}(V^{-}) u_{n_{j} - 1})\xi}
	= 0.
	\]
	As \(j \to \infty\) in \eqref{eqEulerLagrangeApproximation-Variant-Bis}, we then get
	\[
	\ps{\widetilde u}{\xi}_{i}
	= \int_{D_{i}} w_{i} h \xi
	\quad \text{for every \(\xi \in H_{i}(\Omega) \cap L^{\infty}(\Omega)\).}
	\]
	For a general \(\xi \in H_{i}(\Omega)\), we apply this identity to \(T_{k}(\xi)\) and then let \(k \to \infty\).
	Equation \eqref{eqEulerLagrangeDirichlet-Bis} then follows from the density of \(H_{i}(\Omega)\) in \(\mathcal{H}_i(\Omega)\).
	Since the minimizer \(\theta_{i, h}\) satisfies the same equation, it follows by uniqueness that \(\widetilde u = \theta_{i, h}\).{}
	In particular, \(\widetilde u = \theta_{i, h}\) almost everywhere in \(D_{i}\)\,.	
\end{proof}

We conclude this section with an example due to Vázquez and Zuazua~\cite{VazquezZuazua} where the space \(\mathcal{H}_{i}(\Omega)\) is strictly larger than $H_{i}(\Omega)$\,:

\begin{example}
	\label{exampleSupersolution}
	For \(N \ge 3\) and \(0 < \alpha < N - 2\), let \(u_{\alpha} : B_{1} \to \R\) be defined for \(x \ne 0\) by
	\begin{equation}
	\label{eq-1760}
	u_{\alpha}(x) = \frac{1}{|x|^{\alpha}} - 1
	\end{equation}
	and take \(V_{\alpha} : B_{1} \to [- \infty, 0]\) given by
	\begin{equation}
    \label{eq-1765}
    V_{\alpha}(x) 
    = \frac{\Delta u_{\alpha}(x)}{u_{\alpha}(x)}
    = - \frac{\alpha(N-2-\alpha)}{|x|^2(1-|x|^{\alpha})}.
	\end{equation}
	Then, \(u_{\alpha} \in W_{0}^{1, 1}(B_{1}) \cap L^{1}(B_{1}; V_{\alpha} \dif x)\) and
	\begin{equation}
		\label{eq-1357}
	- \Delta u_{\alpha} + V_{\alpha} u_{\alpha} = 0
	\quad \text{in the sense of distributions in \(B_{1}\).}
	\end{equation}

    Since \(S \subset \{0\}\), the set \(D \vcentcolon= B_{1} \setminus S\) is connected.
    We thus have only one completion space, which we denote by \(\mathcal{H}(D)\).{}
    From \cite{OP}*{Example~1.2}, the torsion function \(\zeta_{1, \alpha}\) satisfies \(\widehat{\zeta_{1, \alpha}}(0) = 0\), whence \(S = \{0\}\).
    Since $V_{\alpha}$ is negative, $H(D) = W_{0}^{1, 2}(B_{1})$. 
    We now verify that for \(\alpha = \frac{N-2}{2}\) we have
    \begin{equation}\label{by1362}
    u_{\alpha} \not\in H(D){}
    \quad \text{and} \quad{}
    u_{\alpha} \in \mathcal{H}(D).
    \end{equation}
    For the first assertion of \eqref{by1362}, it suffices to observe that
    \(\abs{\nabla u_{\alpha}} = \alpha/|x|^{\alpha + 1} \not\in L^{2}(B_{1})\) when \(\alpha \ge \frac{N-2}{2}\).{}
	To show that $u_{\alpha}\in \mathcal{H}(D)$ we proceed by truncation.
	We have
\begin{equation}
\label{eq-1375}
\int_{B_1}|\nabla T_k(u_{\alpha})|^2
= \alpha^2 \int_{B_{1} \setminus B_{\rho_{k}}}{\frac{\dif x}{|x|^{2\alpha + 2}}},
\end{equation}
where \(\rho_k=(k+1)^{-{1}/{\alpha}}\).
Moreover, denoting by \(O(1)\) a quantity that is uniformly bounded with respect to \(k\), we can write
\begin{equation}
\label{eq-1387}
\int_{B_{1}} V_{\alpha}T_{k}(u_{\alpha})^{2}
= - \alpha (N-2-\alpha) \int_{B_{1} \setminus B_{\rho_{k}}} \frac{\dif x}{|x|^{2 \alpha + 2}} + O(1).
\end{equation}
When \(\alpha = \frac{N-2}{2}\), the coefficients of the integrals in \eqref{eq-1375} and \eqref{eq-1387} are opposite to each other and we deduce that
\[{}
\norm{T_{k}(u_{\alpha})}^{2}
= \int_{B_1} \bigl( |\nabla T_k(u_{\alpha})|^2 + V_{\alpha}T_{k}(u_{\alpha})^{2} \bigr)
= O(1).
\]
Hence, the sequence $(T_k(u_{\alpha}))_{k\in\mathbb{N}}$ is bounded in $\mathcal{H}(D)$.
Since it converges pointwise to $u_{\alpha}$\,, using Lemma~\ref{lemmaClosureH} we conclude that $u_{\alpha}\in \mathcal{H}(D)$.
\end{example}

\section{Proof of Theorem~\ref{theoremPoincare-bisA}}
\label{sectiontheoremPoincare-bisA}

\begin{proof}[{Proof of Theorem~\ref{theoremPoincare-bisA}}] 
Let \(z_{i}\) be the duality solution of
\begin{equation}
\label{eq1800}
	\left\{
	\begin{alignedat}{2}
	-\Delta z_{i} + V^{+} z_{i} & = \mu\lfloor_{D_{i}} && \quad \text{in } \Omega,\\
	 z_{i} & = 0 && \quad  \text{on }  \partial \Omega.
	\end{alignedat}
	\right.
\end{equation}
By \eqref{eq1399}, we have
\begin{equation}\label{16/07}
\zeta_{Q(z_{i})}
\le z_{i}
\quad \text{almost everywhere in \(\Omega\).}
\end{equation}
We claim that, for every \(n \in \N\),
\begin{equation}
	\label{eq977}
u_{n} \le u \chi_{D_{i}}
\quad \text{almost everywhere in \(\Omega\),}
\end{equation}
where \(u\) is the distributional solution of \eqref{eqDirichletProblemIntroduction} and $(u_{n})_{n \in \N}$ is the sequence of duality solutions defined in Proposition~\ref{increasingsequence} with datum \(Q(z_{i})\),{} that is for \(n \ge 1\),
\[{}
	\left\{
	\begin{alignedat}{2}
	-\Delta u_{n} + V^{+} u_{n} & = T_{n}(Q(z_{i})) + T_{n}(V^{-})u_{n - 1} && \quad \text{in } \Omega,\\
	u_{n} & = 0 && \quad  \text{on }  \partial \Omega.
	\end{alignedat}
	\right.
\]
Since for every nonnegative \(f \in L^{\infty}(\Omega)\), \(\zeta_{f}\) is also nonnegative, we have
\begin{equation}
\label{eq1832}
\begin{split}
\int_{\Omega} u_{n} f
& = \int_{\Omega} \zeta_{f} (T_{n}(Q(z_{i})) + T_{n}(V^{-}) u_{n - 1})\\
& \le \int_{\Omega} \zeta_{f} Q(z_{i}) + \int_{\Omega} \zeta_{f} V^{-} u_{n - 1}.
\end{split}
\end{equation}

We prove \eqref{eq977} by induction.
Firstly, we have $u_0 = 0 \le u \chi_{D_{i}}$ by assumption on \(u\).
Assume now that, for some $n \ge 1$, the inequality holds for \(n - 1\).{}
Using the duality formulation of \(z_{i}\) and the induction assumption in \eqref{eq1832},
for every nonnegative \(f \in L^{\infty}(\Omega)\) we have
\[{}
\int_{\Omega} u_{n} f
 \le \int_{\Omega} \zeta_{Q(z_{i})} f + \int_{\Omega} \zeta_{f} V^{-} u \chi_{D_{i}}\,.
\]
Using \eqref{16/07} and the fact that \(z_{i}\) and \(u\chi_{D_{i}}\) are duality solutions of \eqref{eq1800} and \eqref{eqDirichletProblemIntroduction} with datum \(\mu\lfloor_{D_{i}}\)\,, respectively, we then get
\[
\int_{\Omega} u_{n} f
 \le \int_{\Omega} z_{i} f + \int_{\Omega} \zeta_{f} V^{-} u\chi_{D_{i}}
 = \int_{\Omega} \widehat{\zeta_{f}} \dif\mu\lfloor_{D_{i}} + \int_{\Omega} \zeta_{f} V^{-} u\chi_{D_{i}}
 = \int_{\Omega} u\chi_{D_{i}} f.
\]
Since \(f\) is an arbitrary nonnegative function in \(L^{\infty}(\Omega)\), estimate \eqref{eq977} then follows.

As a consequence of \eqref{eq977}, we have
\begin{equation}
\label{eq1883}
u_{n} = 0
\quad \text{almost everywhere in \(\Omega \setminus D_{i}\)\,.}
\end{equation}
Let us now show that, for every \(n \ge 1\),
\begin{equation}
\label{eq1889}
u_{n} > 0
\quad \text{almost everywhere in \(D_{i}\)\,.}
\end{equation}
Indeed, we observe that, by the representation formula \eqref{eqRepresentationFormula}, for almost every \(x \in \Omega\) we have
\[{}
z_{i}(x) = \int_{D_{i}} \widehat{G_{x}} \dif\mu.
\]
Since \(\mu(D_{i}) > 0\) and \(\widehat{G_{x}} > 0\) in \(D_{i}\) for \(x \in D_{i}\)\,, we then have
\[{}
z_{i} > 0
\quad \text{almost everywhere in \(D_{i}\)\,.}
\]
Thus, \(Q(z_{i}) > 0\) almost everywhere in \(D_{i}\).{}
Applying again \eqref{eqRepresentationFormula}, for almost every \(x \in \Omega\) we have
\[{}
u_{n}(x) 
= \int_{\Omega} {G_{x}} \, (T_{n}(Q(z_{i})) + T_{n}(V^{-}) u_{n - 1})
\ge \int_{D_{i}} {G_{x}} \, T_{n}(Q(z_{i})).
\]
For \(x \in D_{i}\)\,, the integrand is almost everywhere positive in \(D_{i}\) and \eqref{eq1889} follows.

Let \(n \ge 1\).{}
As \(u_{n} \in W^{1, 2}_0(\Omega)\cap L^2(\Omega; V^+ \dif x)\) is also a variational solution for the same datum, we have
\[
\int_{\Omega} (\nabla u_{n} \cdot \nabla \xi + V^{+} u_{n} \xi) 
= \int_{\Omega}(T_{n}(Q(z_{i})) + T_{n}(V^-) u_{n - 1}) \xi{}
\]
for every \(\xi \in W^{1, 2}_0(\Omega)\cap L^2(\Omega; V^+ \dif x)\).{}
By Lemma~\ref{lemmaPoincareVariational} applied to the potential \(V^{+}\) with \(f = T_{n}(Q(z_{i})) + T_{n}(V^-) u_{n - 1}\)\,, it follows from \eqref{eq1883} and \eqref{eq1889} that
\begin{equation}
	\label{eq646}
	\int_{D_{i}} (\abs{\nabla \xi}^{2} + V^{+} \xi^{2})
	\ge \int_{D_{i}} \frac{T_{n}(Q(z_{i}))+ T_{n}(V^-) u_{n - 1}}{u_{n}} \xi^{2},
\end{equation}
for every \(\xi \in W^{1, 2}_0(\Omega)\cap L^2(\Omega; V^+ \dif x)\).
Denote by \(\widetilde{u}\) the pointwise limit of the nondecreasing sequence \((u_{n})_{n \in \N}\)\,.{}
Since 
\[{}
	0 \le \widetilde u \le u \chi_{D_{i}} \in L^{1}(\Omega),
\] we have \(\widetilde u < \infty\) almost everywhere in \(D_{i}\) and then \({u_{n-1}}/{u_{n}} \to 1\)  almost everywhere in \(D_{i}\)\,.
By Fatou's lemma, letting $n \to \infty$ in \eqref{eq646} we obtain that
\[{}
	\int_{D_{i}} (\abs{\nabla \xi}^{2} + V^{+} \xi^{2})
	\ge \int_{D_{i}} \Bigr(\frac{Q(z_{i})}{\widetilde u} + V^{-} \Bigr)\xi^{2}.
\]
Hence, any measurable function \(w_{i} : D_{i} \to (0, +\infty)\) such that
\begin{equation}
	\label{eqWeightChoice}
	0 < w_{i} \le \frac{Q(z_{i})}{\widetilde u}
	\quad \text{almost everywhere in \(D_{i}\)}
\end{equation}
satisfies estimate \eqref{eqPoincareStrong}.
\end{proof}

\begin{example}    
	\label{exampleNonZero}
	Let \(u_{\alpha}\) and \(V_{\alpha}\) be given by \eqref{eq-1760} and \eqref{eq-1765}, respectively.
	We show that the assumption of Theorem~\ref{theoremPoincare-bisA} is satisfied when \(\frac{N-2}{2} < \alpha < N-2\), even though \eqref{eq-1357} holds.
	Indeed, for \(\frac{N-2}{2} \le \beta < \alpha\), we have
	\begin{equation*}
	- \Delta u_{\beta} + V_{\alpha} u_{\beta} = f_{\alpha, \beta}
	\quad \text{in the sense of distributions in \(B_{1}\),}
	\end{equation*}
	where \(f_{\alpha, \beta} \in L^{1}(B_{1})\) is a nonnegative function and \(f_{\alpha, \beta}(x) > 0\) for \(x \ne 0\).
	Indeed, by an explicit computation,
	\[
	f_{\alpha, \beta}(x) 
	= \frac{\beta(N - 2 - \beta) (1 - \abs{x}^{\alpha}) - \alpha (N - 2 - \alpha) (1 - \abs{x}^{\beta})}{\abs{x}^{\beta + 2} (1 - \abs{x}^{\alpha})}.
	\]
	As the function \(t \mapsto t(N - 2 - t)\) is decreasing in the interval \([\frac{N-2}{2}, N-2]\), for \(\beta < \alpha\) in this range we then have
	\[
	f_{\alpha, \beta}
	\ge \frac{\alpha(N - 2 - \alpha)}{\abs{x}^{\beta + 2} (1 - \abs{x}^{\alpha})} (\abs{x}^{\beta} - \abs{x}^{\alpha}) > 0
	\quad \text{for every \(x \in B_{1} \setminus \{0\}\).}
	\]
\end{example}

Observe that in Example~\ref{exampleNonZero} one has \(u_{\beta} \not\in L^{2}(B_{1}; V^{-} \dif x)\) if and only if \(\beta \ge (N-2)/2\).
We conclude this section showing that the expected spectral property can be recovered on each component $D_i$ provided the solution has enough integrability with respect to \(V^{-}\).

\begin{corollary}
	\label{propositionSupersolutionZero}
	Let \(i \in I\) be such that \eqref{eqPoincareStrong} holds for some  measurable function \(w_{i} : D_{i} \to (0, +\infty)\).{}
	If \(u\) is a duality solution of \eqref{eqDualitySolution} with \(\mu \in \mathcal{M}(\Omega)\) such that \(\mu\lfloor_{D_{i}}{} = 0\) and \(u \in L^{2}(D_{i}; V^{-}\dif x)\), then \(u = 0\) almost everywhere in \(D_{i}\)\,.
\end{corollary}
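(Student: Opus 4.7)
The plan is to show that $v \vcentcolon= u\chi_{D_{i}}$ itself lies in $W_{0}^{1,2}(\Omega) \cap L^{2}(\Omega; V^{+}\,\dif x)$, so that $v$ becomes a legitimate test function both in the equation it satisfies and in the weighted Poincar\'e inequality \eqref{eqPoincareStrong}. By Corollary~\ref{cutoff}, $v$ is a duality solution of \eqref{eqDualitySolution} with datum $\mu\lfloor_{D_{i}}{} = 0$, equivalently a duality solution of \eqref{eqDualitySolutionV+} with the absolutely continuous datum $V^{-}v\,\dif x$, which is carried by $D_{i}$ and, thanks to the hypothesis $u \in L^{2}(D_{i}; V^{-}\,\dif x)$, satisfies $V^{-}v^{2} \in L^{1}(\Omega)$.

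To reach the energy space I would proceed by truncating the datum: for each $n \ge 1$, let $v_{n} \vcentcolon= \zeta_{T_{n}(V^{-}v)}$, which belongs to $W_{0}^{1,2}(\Omega) \cap L^{2}(\Omega; V^{+}\,\dif x)$ by the variational characterization of \eqref{eq731}, is supported in $D_{i}$ by Proposition~\ref{cutoffV+}, and solves the associated Euler--Lagrange equation with right-hand side $T_{n}(V^{-}v) \in L^{\infty}(\Omega)$. Testing this equation with $\xi = v_{n}$ and applying the Cauchy--Schwarz inequality in $L^{2}(D_{i}; V^{-}\,\dif x)$ yields
\[
I_{n} \vcentcolon= \int_{\Omega} (\abs{\nabla v_{n}}^{2} + V^{+}v_{n}^{2})
\le \biggl(\int_{D_{i}} V^{-}v^{2}\biggr)^{\!\frac{1}{2}} \biggl(\int_{D_{i}} V^{-}v_{n}^{2}\biggr)^{\!\frac{1}{2}},
\]
while the weighted Poincar\'e inequality applied to $v_{n}$ gives the matching bound $\int_{D_{i}} V^{-}v_{n}^{2} \le I_{n}$. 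Combining the two, the sequence $(I_{n})_{n \in \N}$ is uniformly bounded by $\int_{D_{i}} V^{-}v^{2}$.

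Since $T_{n}(V^{-}v) \to V^{-}v$ in $L^{1}(\Omega)$, the standard $L^{1}$-continuity of duality solutions forces $v_{n} \to v$ in $L^{1}(\Omega)$, and the uniform bound on $I_{n}$ furnishes a subsequence $v_{n_{j}} \rightharpoonup v$ weakly in $W_{0}^{1,2}(\Omega) \cap L^{2}(\Omega; V^{+}\,\dif x)$. In particular $v$ itself lies in this space. Plugging $\xi = v$ into the equation for $v_{n_{j}}$ and passing to the limit---using weak convergence on the left and dominated convergence with majorant $V^{-}v^{2} \in L^{1}(D_{i})$ on the right---I would obtain
\[
\int_{\Omega} (\abs{\nabla v}^{2} + V^{+}v^{2}) = \int_{D_{i}} V^{-}v^{2},
\]
i.e.\ $\|v\|_{i}^{2} = 0$. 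A final application of \eqref{eqPoincareStrong} with $\xi = v$ then gives $\int_{D_{i}} w_{i} v^{2} \le 0$, and positivity of $w_{i}$ forces $v = 0$ almost everywhere in $D_{i}$.

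The main obstacle is the closure of the a priori estimate for $I_{n}$: neither the Cauchy--Schwarz step nor the usual $L^{1}$ bound suffices by itself, and one must invoke \eqref{eqPoincareStrong} already at the approximation level to dominate $\int_{D_{i}} V^{-}v_{n}^{2}$ by $I_{n}$. Once this interplay between the truncation and the spectral gap on $D_{i}$ is secured, everything that follows is a routine weak-limit passage.
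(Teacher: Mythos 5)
Your argument is correct, but it takes a genuinely different route from the paper. You truncate the \emph{datum}: setting \(v = u\chi_{D_{i}}\), which by Corollary~\ref{cutoff} is the duality solution of \eqref{eqDualitySolutionV+} with datum \(V^{-}v\dif x\), you approximate by the variational solutions \(v_{n} = \zeta_{T_{n}(V^{-}v)}\), close the a priori estimate by playing Cauchy--Schwarz against \eqref{eqPoincareStrong} at the level of \(v_{n}\), and pass to the weak limit to conclude that \(v\) itself lies in the energy space with \(\norm{v}_{i} = 0\). The paper instead truncates the \emph{solution}: it invokes Proposition~\ref{propositionDictionary} to write \(-\Delta v + Vv = -\tau\) with \(\tau\) diffuse and carried by \(S\), uses the renormalized-solution framework of \cite{DMOP} to test with \(T_{k}(v)\), kills the boundary term \(\int \widehat{T_{k}(v)}\dif\tau\) via Proposition~\ref{propositionSZero}, and only then applies \eqref{eqPoincareStrong} and lets \(k \to \infty\) using \(u \in L^{2}(D_{i}; V^{-}\dif x)\). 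Your approach stays entirely inside the duality/variational toolbox (Proposition~\ref{cutoffV+}, the Euler--Lagrange equations, uniqueness of duality solutions) and avoids both the renormalized-solution machinery and the quasi-everywhere vanishing on \(S\); the price is the routine but necessary bookkeeping in the limit passage — identifying the weak \(W^{1,2}_{0}\) and \(L^{2}(\Omega; V^{+}\dif x)\) limits of \((v_{n})\) with \(v\) via the duality formula and almost-everywhere convergence (as in Lemma~\ref{lemmaClosureH}), and, if one wants to be scrupulous, applying \eqref{eqPoincareStrong} first to \(T_{k}(v_{n})\) so that \(\int_{D_{i}} V^{-}v_{n}^{2}\) is known to be finite before it is moved to the other side of the inequality. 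Both proofs use the hypothesis \(u \in L^{2}(D_{i}; V^{-}\dif x)\) in an essential and analogous way, yours through the Cauchy--Schwarz bound on \(I_{n}\), the paper's through the dominated-convergence step as \(k \to \infty\).
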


\begin{proof}
	Since \(\mu\lfloor_{D_{i}}{} = 0\), by Corollary~\ref{cutoff} and Proposition~\ref{propositionDictionary} the function \(\bar u_{i} \vcentcolon= u \chi_{D_{i}}\) satisfies
	\[
	- \Delta \bar u_{i} + V \bar u_{i} = -\tau
	\quad \text{in the sense of distributions in \(\Omega\),}
	\]
	for some diffuse measure $\tau$ such that \(|\tau|(\Omega \setminus S) = 0\). 
	By \cite{DMOP}, for every \(k > 0\) we then have
	\begin{equation*}
	\int_{\Omega} \bigl(|\nabla T_{k}(\bar u_{i})|^{2} + V \bar u_{i} T_{k}(\bar u_{i})\bigr) = -\int_{\Omega} \widehat{T_{k}(\bar{u}_{i})} \dif \tau.
	\end{equation*}
	Since $|T_{k}(\bar{u}_{i})| \leq |u|$ and $\widehat{u} = 0$ quasi-everywhere on $S$ by Proposition~\ref{propositionSZero}, we have
	\[
		\widehat{T_{k}(\bar{u}_{i})} = 0 \quad \text{quasi-everywhere on $S$.}
	\]
	Therefore, from $|\tau|(\Omega \setminus S) = 0$ it follows that
	\[
	\int_{D_{i}} \bigl(|\nabla T_{k}(u)|^{2} + V u T_{k}(u)\bigr) = 0.
	\]	
	Using \eqref{eqPoincareStrong} with \(T_{k}(u)\) and this identity, we get
	\[
	\begin{split}
	\int_{D_{i}} w_{i} T_{k}(u)^{2}
	\le \int_{D_{i}} \bigl(|\nabla T_{k}(u)|^{2} + V T_{k}(u)^{2}\bigr)
	& = \int_{D_{i}} V (T_{k}(u) - u) T_{k}(u)\\
	& \le \int_{D_{i}} V^{-} (u - T_{k}(u)) T_{k}(u).
	\end{split}
	\]
	Since \(u \in L^{2}(D_{i}; V^{-} \dif x)\), by the Dominated Convergence Theorem the right-hand side converges to \(0\) as \(k \to \infty\).
	Therefore, applying Fatou's lemma we get
	\[
	\int_{D_{i}} w_{i} u^{2} = 0.
	\]
	The conclusion follows since \(w_{i} > 0\) almost everywhere in \(D_{i}\)\,.
\end{proof}


\section{Proofs of Theorems~\ref{theoremPoincare} and \ref{theoremPoincare-bisB}}
\label{sectionlast}

\begin{proof}[Proof of Theorem~\ref{theoremPoincare}]
Given \(0 < \alpha < 1\), we apply Theorem~\ref{theoremPoincare-bisA} with potential \(V_{\alpha} \vcentcolon= V^{+} - \alpha V^{-}\) and measure \(\nu_{\alpha} \vcentcolon= \mu + (1 - \alpha) V^{-}u\).{}
We observe that 
\begin{equation}
	\label{eq1038}
	\nu_{\alpha}(D_{i}) > 0.{}
\end{equation}
This is clear when \(\mu(D_{i}) > 0\).{}
Otherwise, we have \(\mu(D_{i}) = 0\) and
\[{}
-\Delta u + V^{+}u = V^{-}u
\quad \text{as a measure in \(D_{i}\).}
\]
Applying the representation formula \eqref{eqRepresentationFormula} and the fact that \(\widehat{G_{x}} = 0\) in \(\Omega \setminus D_{i}\) for \(x \in D_{i}\)\,, we get
\[{}
u(x) = 
\int_{D_{i}} {G_{x}} V^{-} u  
\quad \text{for almost every \(x \in D_{i}\)\,.}
\]
Since \(u\) is nontrivial in \(D_{i}\)\,, the integral in the right-hand side is positive for some \(x \in D_{i}\)\,.{}
We then must have \(\int_{D_{i}} V^{-}u \dif x > 0\) and \eqref{eq1038} follows.{}

We now let \(\xi \in W_{0}^{1, 2}(\Omega) \cap L^{2}(\Omega; V^{+} \dif x)\).{}
Since \(\nu_{\alpha}(D_{i}) > 0\), from Theorem~\ref{theoremPoincare-bisA} there exists a measurable function \(w_{\alpha} : D_{i} \to (0, +\infty)\) such that
\[
\int_{D_{i}} (|\nabla \xi|^2 + V_{\alpha} \xi^{2})
\ge \int_{D_{i}} w_{i,\alpha}\xi^2 \geq 0.
\]
Taking the limit as $\alpha \to 1$, by the Monotone Convergence Theorem it follows that
\[
\int_{D_{i}}(|\nabla \xi|^2 + V\xi^2)
\geq
0.
\qedhere
\]
\end{proof}

\begin{proof}[{Proof of Theorem~\ref{theoremPoincare-bisB}}]
Using the notation of the proof of Theorem~\ref{theoremPoincare-bisA}, we take a bounded measurable function \(w_{i} : D_{i} \to (0, +\infty)\) such that \(u \in L^{2}(D_{i}; w_{i} \dif x)\) and
\begin{equation}
	\label{eq1331}
	0 < w_{i} \le \frac{Q(z_{i})}{u}
	\quad \text{almost everywhere in \(D_{i}\)\,.}
\end{equation}
Since \(0 < \widetilde u \le u\) almost everywhere in \(D_{i}\)\,, this weight satisfies \eqref{eqWeightChoice} and then \eqref{eqPoincareStrong} holds.

Denote by \((v_{n})_{n \in \N}\) the sequence defined in Proposition~\ref{increasingsequence} with datum \(w_{i} h \chi_{D_{i}}\)\,, where \(h \in L^{2}(D_{i}; w_{i} \dif x)\) and \(0 \le h \le u\).{}
We claim that, for every \(n \in \N\),
		\begin{equation}
			\label{eq1009}
			v_{n} \le u \chi_{D_{i}}
			\quad \text{almost everywhere in \(\Omega\).}
		\end{equation}
	Indeed, by \eqref{eq1331} and the fact that \(h \le u\) in \(\Omega\),
		\[{}
		0 \le w_{i} h
		\le \frac{Q(z_{i})}{u} h
		\le Q(z_{i})
		\quad \text{almost everywhere in \(D_{i}\)\,.}
		\]
		By comparison of solutions, we then get 
		\[{}
		v_{n} \le u_{n} \le u \chi_{D_{i}}
		\quad \text{almost everywhere in \(\Omega\),}
		\]
		where \((u_{n})_{n \in \N}\) is the sequence used in the proof of Theorem~\ref{theoremPoincare-bisA} and the second inequality is given by \eqref{eq977}.

As a consequence of \eqref{eq1009}, the sequence \((v_{n})_{n \in \N}\) is bounded in \(L^{2}(D_{i}; w_{i} \dif x)\) and \(L^{1}(D_{i}; V^{-} \dif x)\).{}
Then, by Proposition~\ref{propositionLimitMinimizer}, we have \(\lim\limits_{n \to \infty}{v_{n}} = \theta_{i, h}\) almost everywhere in \(D_{i}\)\,.
	To conclude, since \(v_{n}\) is a duality solution we have
	\[{}
	\int_{D_{i}} v_n f
	=\int_{D_i} (T_{n}(w_{i} h) + T_n(V^{-}) v_{n-1})\zeta_{f}
	\quad \text{for every \(f \in L^{\infty}(\Omega)\)}.
	\]
	Since \(\zeta_{f}\) is bounded, \(0 \le w_{i}h \le Cu \in L^{1}(D_{i})\) and \(0 \le T_{n}(V^{-}) v_{n - 1} \le V^{-} u \in L^{1}(D_{i})\), it then follows from the Dominated Convergence Theorem that
 	\[{}
	\int_{D_{i}} \theta_{i, h} f
	=\int_{D_i} (w_{i} h + V^{-} \theta_{i, h})\zeta_{f}.
	\]
	Hence, \(\theta_{i, h}\chi_{D_{i}}\) is a duality solution of \eqref{eqDualitySolution} with datum \(w_{i}h \chi_{D_{i}}\).{}
	As we have \(\theta_{i, h} = \theta_{i, h}\chi_{D_{i}}\) in \(\Omega\) and \(w_{i}h + V^{-} \theta_{i, h} \in L^{1}(D_{i})\), the conclusion follows from Proposition~\ref{propositionDictionary}.
	\end{proof}

The next example shows that the validity of a Poincaré inequality \eqref{eqPoincareStrong} is not enough to ensure that minimizers \(\theta_{i, h}\) are distributional solutions of an equation of the form \eqref{eq-379} for any \(\mu \in \mathcal{M}(\Omega)\).

\begin{example}
\label{exampleFailureDistributions}
Given a smooth convex open subset \(\omega \Subset \Omega\), let \(V : \Omega \to [-\infty, \infty]\) be defined on \(\Omega \setminus \partial\omega\) by
\[
V =\frac{1}{4d_{\partial\omega}^{2}}\big(\chi_{\Omega\setminus \omega} -\chi_{\omega}\big) ,
\]
where \(d_{\partial\omega}\) denotes the distance to \(\partial\omega\).{}
Since \(S\) depends only on \(V^{+}\), one shows that \(S = \partial\omega\), whence \(\Omega \setminus S\) has two connected components, namely \(D_{1} \vcentcolon= \omega\) and \(D_{2} \vcentcolon= \Omega \setminus \overline\omega\).
The Poincaré inequality \eqref{eqPoincareStrong} with positive weight holds for both of them.
This is clear on \(D_{2}\), while on \(D_{1}\) there exists \(\epsilon > 1/4\diam^{2}(\omega)\) such that
\begin{equation*}
\int_{\omega} (|\nabla \xi|^2 + V\xi^2) 
\ge \epsilon \int_{\omega} \xi^2{}
	\quad \text{for every \(\xi \in W_{0}^{1, 2}(\omega)\)}\,;
\end{equation*}
see \cite{BrezisMarcus:1997}*{Theorem~II}.
Hence, for any \(h \in L^{2}(D_{1})\), the functional \(E\) with \(i = 1\) has a minimizer \(\theta_{1, h}\) in \(\mathcal{H}_{1}(\Omega) \subset L^{2}(D_{1})\).

We claim that if \(h\) is nonnegative and \(\int_{D_{1}} h > 0\), then \(\theta_{1, h}\) cannot be a distributional solution of \eqref{eqDirichletProblemIntroduction} for any \(\mu \in \mathcal{M}(\Omega)\), thus extending the nonexistence of continuous supersolutions from \cite{BrezisMarcus:1997}*{Theorem~III}.
To this end, it suffices to show that \(V \theta_{1, h} \not\in L^{1}(D_{1})\).{}

	Since \(h\) is nonnegative, we have \(\theta_{1, h} \ge 0\) almost everywhere in \(\omega\).{}
	Moreover,
	\[{}
	-\Delta\theta_{1, h} - \frac{\theta_{1, h}}{4 d_{\partial\omega}^{2}} = h
	\quad \text{in the sense of distributions in \(\omega\).}
	\]
	Given a nonempty open set \(U \Subset \omega\) such that \(\int_{U} \theta_{1, h} > 0\), let \(v\) be the solution of 
	\[
	\left\{
	\begin{alignedat}{2}
		- \Delta v & = \frac{\theta_{1, h}}{4d_{\partial\omega}^{2}}\chi_{U} \quad && \text{in \(\omega\),}\\
		v & = 0 \quad && \text{on \(\partial\omega\).}
	\end{alignedat}
	\right.
	\]
	By comparison, we have \(0 \le v \le \theta_{1, h}\) almost everywhere in \(\omega\)\,; see \cite{bookponce}*{Proposition~6.1 and Lemma~17.6}.
	Since \(v\) is a nontrivial superharmonic function in \(\omega\), the Hopf lemma for \(v\) in a neighborhood of \(\partial\omega\) implies that
	\[{}
	\infty = \int_{\omega}\frac{v}{d_{\partial\omega}^{2}}
	\le \int_{\omega}\frac{\theta_{1, h}}{d_{\partial\omega}^{2}}.
	\]
	We deduce that \(V \theta_{1, h} \not\in L^{1}(D_{1})\) and then \(\theta_{1, h}\) cannot be a distributional solution in \(\Omega\).
\end{example}

\section*{Acknowledgments}

S.~Buccheri was partially supported by PNPD/CAPES-UnB-Brazil from grant 88887.363582/2019-00 and by the Austrian Science Fund (FWF) from projects F65 and P32788.
A.~C. Ponce is grateful for the invitation and hospitality of the Math Departements of Universidade de Brasília and ``Sapienza'' Universit\`a di Roma where part of this work was carried out.
He also acknowledges support of the Fonds de la Recherche scientifique (F.R.S.--FNRS) from grant J.0020.18 and the hospitality of the Academia Belgica in Rome.

\begin{bibdiv}

\begin{biblist}

\bib{Agmon}{article}{
   author={Agmon, Shmuel},
   title={Bounds on exponential decay of eigenfunctions of Schr\"{o}dinger
   operators},
   conference={
      title={Schr\"{o}dinger operators},
      address={Como},
      date={1984},
   },
   book={
      series={Lecture Notes in Math.},
      volume={1159},
      publisher={Springer, Berlin},
   },
   date={1985},
   pages={1--38},
}

\bib{BrezisMarcus:1997}{article}{
   author={Brezis, Ha\"{\i}m},
   author={Marcus, Moshe},
   title={Hardy's inequalities revisited},
   note={Dedicated to Ennio De Giorgi},
   journal={Ann. Scuola Norm. Sup. Pisa Cl. Sci. (4)},
   volume={25},
   date={1997},
   number={1-2},
   pages={217--237 (1998)},
}

\bib{Brezis_Ponce:2004}{article}{
   author={Brezis, Ha{\"{\i}}m},
   author={Ponce, Augusto C.},
   title={Kato's inequality when \(\Delta u\) is a measure},
   journal={C. R. Math. Acad. Sci. Paris},
   volume={338},
   date={2004},
   number={8},
   pages={599--604},
}

\bib{Brezis_Ponce:2008}{article}{
   author={Brezis, Ha{\"{\i}}m},
   author={Ponce, Augusto C.},
   title={Kato's inequality up to the boundary},
   journal={Commun. Contemp. Math.},
   volume={10},
   date={2008},
   number={6},
   pages={1217--1241},
}

\bib{BrezisVazquez}{article}{
   author={Brezis, Ha{\"{\i}}m},
   author={V\'{a}zquez, Juan Luis},
   title={Blow-up solutions of some nonlinear elliptic problems},
   journal={Rev. Mat. Univ. Complut. Madrid},
   volume={10},
   date={1997},
   number={2},
   pages={443--469},
}

\bib{DalMasoMosco}{article}{
   author={Dal Maso, Gianni},
   author={Mosco, Umberto},
   title={Wiener criteria and energy decay for relaxed Dirichlet problems},
   journal={Arch. Rational Mech. Anal.},
   volume={95},
   date={1986},
   number={4},
   pages={345--387},
}

\bib{DMOP}{article}{
   author={Dal Maso, Gianni},
   author={Murat, Fran\c{c}ois},
   author={Orsina, Luigi},
   author={Prignet, Alain},
   title={Renormalized solutions of elliptic equations with general measure
   data},
   journal={Ann. Scuola Norm. Sup. Pisa Cl. Sci. (4)},
   volume={28},
   date={1999},
   number={4},
   pages={741--808},
}

\bib{DavilaDupaigne}{article}{
   author={D\'{a}vila, Juan},
   author={Dupaigne, Louis},
   title={Comparison results for PDEs with a singular potential},
   journal={Proc. Roy. Soc. Edinburgh Sect. A},
   volume={133},
   date={2003},
   number={1},
   pages={61--83},
}

\bib{diaz1}{article}{
   author={D\'{\i}az, Jes\'{u}s Ildefonso},
   title={On the ambiguous treatment of the Schr\"{o}dinger equation for the
   infinite potential well and an alternative via flat solutions: the
   one-dimensional case},
   journal={Interfaces Free Bound.},
   volume={17},
   date={2015},
   number={3},
   pages={333--351},
}

\bib{diaz}{article}{
   author={D\'{\i}az, Jes\'{u}s Ildefonso},
   title={On the ambiguous treatment of the Schr\"{o}dinger equation for the
   infinite potential well and an alternative via singular potentials: the
   multi-dimensional case},
   journal={SeMA J.},
   volume={74},
   date={2017},
   number={3},
   pages={255--278},
	note = {Correction: SeMA J. \textbf{75} (2018), no.~3, 563--568}
}

\bib{DupaigneNedev}{article}{
   author={Dupaigne, Louis},
   author={Nedev, Gueorgui},
   title={Semilinear elliptic PDE's with a singular potential},
   journal={Adv. Differential Equations},
   volume={7},
   date={2002},
   number={8},
   pages={973--1002},
}

\bib{Helms:2009}{book}{
   author={Helms, Lester L.},
   title={Potential theory},
   series={Universitext},
   edition={2},
   publisher={Springer, London},
   date={2014},
}

\bib{JMV}{article}{
   author={Jaye, Bb},
   author={Maz’ya, Vv Gg},
   author={Verbitsky, Ii Ee},
   title={Existence and regularity of positive solutions of elliptic equations of
Schrödinger type},
   journal={J. Anal. Math.},
   volume={118},
   date={2012},
   number={2},
   pages={577--621},
}

\bib{LSV}{article}{
   author={Lenz, Daniel},
   author={Stollmann, Peter},
   author={Veseli\'c, Ivan},
   title={The Allegretto-Piepenbrink theorem for strongly local Dirichlet forms},
   journal={Doc. Math.},
   volume={14},
   date={2009},
   pages={167--189},
}

\bib{LP}{article}{
   author={Lucia, Marcello},
   author={Prashanth, Ss},
   title={Criticality theory for Schrödinger operators with
singular potential},
   journal={J. Differ. Equ.},
   volume={267},
   date={2018},
   number={8},
   pages={3400--3440},
}

\bib{Malusa_Orsina:1996}{article}{
   author={Malusa, A.},
   author={Orsina, L.},
   title={Existence and regularity results for relaxed Dirichlet problems with measure data},
   journal={Ann. Mat. Pura Appl. (4)},
   volume={170},
   date={1996},
   pages={57--87},
}

\bib{OrsinaPonce:2008}{article}{
   author={Orsina, Luigi},
   author={Ponce, Augusto C.},
   title={Semilinear elliptic equations and systems with diffuse measures},
   journal={J. Evol. Equ.},
   volume={8},
   date={2008},
   number={4},
   pages={781--812},
}

\bib{OrsinaPonce:2017}{article}{
   author={Orsina, Luigi},
   author={Ponce, Augusto C.},
   title={Hopf potentials for the Schr\"odinger operator},
   journal={Anal. PDE},
   volume={11},
   date={2018},
   number={8},
   pages={2015--2047},
}

\bib{OP}{article}{
   author={Orsina, Luigi},
   author={Ponce, Augusto C.},
   title={On the nonexistence of Green's function and failure of the strong
   maximum principle},
   journal={J. Math. Pures Appl. (9)},
   volume={134},
   date={2020},
   pages={72--121},
}

\bib{Pinchover}{article}{
   author={Pinchover, Yehuda},
   title={Topics in the theory of positive solutions of second-order
   elliptic and parabolic partial differential equations},
   conference={
      title={Spectral theory and mathematical physics: a Festschrift in
      honor of Barry Simon's 60th birthday},
   },
   book={
      series={Proc. Sympos. Pure Math.},
      volume={76},
      publisher={Amer. Math. Soc., Providence, RI},
   },
   date={2007},
   pages={329--355},
}

\bib{PinPsa}{article}{
   author={Pinchover, Yehuda},
   author={Psaradakis, Georgios},
   title={On positive solutions of the $(p,A)$-Laplacian with potential in
   Morrey space},
   journal={Anal. PDE},
   volume={9},
   date={2016},
   number={6},
   pages={1317--1358},
}

\bib{PinTin}{article}{
   author={Pinchover, Yehuda },
   author={Tintarev, },
   title={A ground state alternative for singular Schrödinger operators},
   journal={J. Func. Anal},
   volume={230},
   date={2006},
   pages={65--77},
}

\bib{bookponce}{book}{
  author={Ponce, Augusto C.},
  title={Elliptic PDEs, measures and capacities},
   series={EMS Tracts in Mathematics},
   volume={23},
   publisher={European Mathematical Society (EMS)},
   place={Zürich},
   date={2016},
}

\bib{PonceWilmet}{article}{
   author={Ponce, Augusto C.},
   author={Wilmet, Nicolas},
   title={Characterization of the torsion function's zero-set based on a Wiener criterion},
   note={In preparation},
}

\bib{schechter}{book}{
   author={Schechter, Martin},
   title={Spectra of partial differential operators},
   series={North-Holland Series in Applied Mathematics and Mechanics},
   volume={14},
   edition={2},
   publisher={North-Holland Publishing Co.}
   place={Amsterdam},
   date={1986},
}

\bib{simonbis}{article}{
   author={Simon, Barry},
   title={Large time behavior of the $L^p$ norm of Schrödinger semigroups},
   journal={J. Func. Anal.},
   volume={40},
   date={1981},
   pages={66--83},
}

\bib{simon}{article}{
   author={Simon, Barry},
   title={Schrödinger semigroups},
   journal={Bull. Amer. Math. Soc.},
   volume={7},
   date={1982},
   pages={447--526},
}

\bib{VazquezZuazua}{article}{
   author={Vázquez, Juan Luis},
   author={Zuazua, Enrike},
   title={The Hardy inequality and the asymptotic behaviour of the heat
   equation with an inverse-square potential},
   journal={J. Funct. Anal.},
   volume={173},
   date={2000},
   number={1},
   pages={103--153},
}

\bib{zamboni}{article}{
   author={Zamboni, Pietro},
   title={Some function spaces and elliptic partial differential equations},
   journal={Matematiche (Catania)},
   volume={42},
   date={1987},
   number={1-2},
   pages={171--178 (1989)},
}

\end{biblist}

\end{bibdiv}

\end{document}